\documentclass[11pt]{article}
\usepackage{amssymb}
\usepackage{epsfig}
\usepackage{amsmath}
\usepackage{natbib}
\usepackage{latexsym}
\usepackage{amsfonts}
\usepackage{color}
\usepackage{graphicx}

\usepackage{amsthm}

\numberwithin{equation}{section}

\parskip \baselineskip
\newtheorem{lemma}{Lemma}
\newtheorem{proposition}{Proposition}
\newtheorem{theorem}{Theorem}
\newtheorem{corollary}{Corollary}
\newtheorem{remark}{Remark}
\newtheorem{definition}{Definition}
\newtheorem{example}{Example}

\begin{document}

\begin{center}
{\Large \bf Stopping models closed under pgf composition, and the stability of randomly stopped model extensions}
\end{center}

\begin{center}
{\large Jordi Valero, Josep Ginebra\footnote{
       Address for correspondence: Department of Statistics, Universitat Polit\`ecnica de Catalunya, Avgda.
       Diagonal 647, 6$^{\hbox{\rm a}}$ Planta, 08028Barcelona, Spain
       (E-mail:
       jordi.valero@upc.edu,
       josep.ginebra@upc.es
       )}
        }
\end{center}

Statistical model transformations based on randomly stopped sums, maxima and minima are widely used to extend statistical models.
We characterize the complete set of stopping models for which randomly stopped sum and extreme
model transformations function as statistically stable (idempotent) model extensions.
Stability requires the underlying stopping model to be closed under pgf composition. We prove that
any finite-dimensional, connected stopping model closed under pgf composition is necessarily a family of random variables whose pgfs commute.
Using the corresponding Koenigs function, we establish that these models
form a statistical manifold admitting a global, one-dimensional parametrization $\theta = \Pr(N=1) \in (0, \theta_*]$, where
the probability mass at $i$ is a polynomial in $\theta$ of degree at most $i$.
Finally, we establish a duality between stopping models closed and containing
the identity variable (the ones yielding stable extensions) and the set of probability distributions
supported on the positive integers.
These findings disprove the long standing conjecture that statistical stability occurs only under geometric stopping.

\noindent KEY WORDS: Commutativity of pgfs; Compound model; Marshall--Olkin extension;
Randomly stopped maxima; Randomly stopped minima; Randomly stopped sums; Statistical stability.

\noindent MSC2020: Primary 62E10; Secondary 60E10, 60G70

\section{Introduction}		

The transformation of parametric statistical models through \emph{randomly stopped sums}, introduced by  Neyman (1939) and Feller (1943),
is a cornerstone of mathematical statistics and stochastic modeling. The compound models obtained this way
include some of the most versatile models in applied statistics, and are used
across disciplines like engineering, natural science, actuarial science, mathematical finance, human-computer interaction and queueing theory.
Similarly, transformations based on \emph{randomly stopped maxima or minima},
introduced by Shaked (1975) and Marshall and Olkin (1997), have become essential for constructing flexible lifetime and heavy-tailed models
used in reliability engineering, extreme value analysis, insurance, and epidemiology.

The existing literature extensively details the distributional properties of specific models resulting
from a single application of these transformations using a given stopping and stopped model, as the more than two thousand
citations of Marshall and Olkin (1997) and seven hundred citations of Neyman (1939) indicate.

Conversely, the structural consequences of the repeated application of these transformations remain less understood.
Usually, repeated use of these transformations starting from an initial model keep ``enlarging" the successive models by increasing
the dimension of the corresponding parameter spaces.
On the other hand, the exact conditions under which the iterative use of a randomly stopped sum or extreme model transformation reaches
a stable steady state, (preserving the parameter space),
have not yet been established.

The central challenge tackled here, is the characterization of the conditions under which these statistical model transformations, $\mathcal{T}(\cdot)$,
work as statistically stable extensions.
Formally, given an initial parametric statistical model, $\mathcal{X}$, we seek the requirements ensuring that $\mathcal{T}$ acts as an extension, in the sense that
$\mathcal{X} \subset \mathcal{T}(\mathcal{X})$ for all $\mathcal{X}$,
and the conditions under which this extension is statistically stable,
in the sense that $\mathcal{T}$ is idempotent, i.e.,
$\mathcal{T}(\mathcal{T}(\mathcal{X})) = \mathcal{T}(\mathcal{X})$ for all $\mathcal{X}$.

We establish that the statistical stability of these model transformations is not a property of initial models $\mathcal{X}$,
but it instead requires that the underlying stopping model $\mathcal{N}$ be closed under pgf composition.
For the model transformation to function as an extension, the stopping model
$\mathcal{N}$ must include the degenerate random variable at one $N_I$, which we call the identity variable.

Our primary contribution, Theorem \ref{thm:ult2}, establishes that any finite-dimensional stopping model closed under
pgf composition, connected and with a non-empty interior is necessarily a family of random variables that commute.
Consequently, stopping models closed under pgf composition inherit the properties of these commuting families, detailed in Proposition \ref{prop:prpts}.
In particular, by leveraging the Schr\"oder functional equation and the corresponding Koenigs function,
we are able to treat the resulting stopping models as differentiable statistical manifolds such that:
\begin{enumerate}
\item
they admit a global identifiable one-dimensional parametrization through $\theta=\Pr(N=1) \in (0,\theta_*]$ for some $0 < \theta_* \le 1$, and
\item
for any $i>0$, the probability mass at $i$ is a polynomial in $\theta$ of degree at most $i$, which provides a simple framework for
parameter identification and likelihood based inference.
\end{enumerate}
Refining the main result, Theorem \ref{thm:nnn} establishes a duality between the subclass of stopping models that
are closed under pgf composition and include $N_I$,
and the set of probability distributions supported on the positive integers.
This duality completely solves the characterization problem by
identifying the unique stopping models that ensure that randomly stopped sum and randomly stopped extreme model transformations function as statistically stable model extensions.

The broader class of stopping models that are closed under pgf composition but do not have to include $N_I$, allows for statistically stable extensions via
a generalized framework of model transformations combining randomly stopped maxima or minima with their inverses,
introduced by Valero and Ginebra (2025).

The paper is organized as follows. Section 2 establishes basic definitions and notation.
Section 3 examines the families of r.v.'s that commute, focusing on those with a non-empty interior.
Section 4 proves that finite-dimensional stopping models
that are closed, connected and with a non-empty interior must be families of r.v.'s that commute, and details their properties.
Section 5 characterizes the sub-class of models as in Section 4 that also include the identity $N_I$.
Finally, Section 6 provides many examples of stopping models closed under pgf composition with and without $N_I$, hence disproving
the conjecture by Marshall and Olkin (1997) that for randomly stopped extremes statistical stability occurs only when the stopping model is geometric.

\section{Basic definitions and notation}

\subsection{Stopping models}

Let $N$ be a positive integer-valued random variable (a stopping variable)
such that $\Pr(N=0)=0$. Its probability generating function is given by $h_N(t) = \sum_{i=1}^{\infty} p_i(N) t^i$,
where $p_i(N) = \Pr(N=i)$ for $i \in \mathbb{N}^{+}$. We denote by $N_I$ the degenerate random variable at one,
which satisfies $\Pr(N_I=1)=1$ and has pgf $h_{N_I}(t)=t$.

A stopping model $\mathcal{N}$ is a family of such positive integer-valued random variables.
We assume that stopping models admit a parametric structure of one of the following types:
\begin{enumerate}
\item
\emph{Global parametrization:} The r.v's are indexed by a parameter vector
$\delta$ in a parameter space $\mathcal{D}_\mathcal{N} \subset \mathbb{R}^{m}$, assumed to be
an open set (possibly with boundary).
In this case, we represent the model as
$\mathcal{N} = \{N_{\delta} \mid h_{N_{\delta}} = \sum_{i=1}^{\infty} p_i(\delta) t^i, ~ \delta \in \mathcal{D}_\mathcal{N} \}.$
\item
\emph{Local parametrization:} The model is an $m$-dimensional differentiable manifold
(possibly with boundary), embedded in the space of probability sequences.
In this case, each $N \in \mathcal{N}$ admits a local parametrization by $m$ coordinates.
\end{enumerate}
We emphasize that we exclusively consider stopping models for which $\Pr(N=0)=0$ for all $N \in \mathcal{N}$.
To ensure statistical regularity, we eventually focus on stopping models such that:
\begin{enumerate}
\item The probabilities $p_i(\cdot)$ are continuously differentiable w.r.t. the parameters for all $i \in \mathbb{N}^{+}$.
\item The model is connected and has a non-empty interior relative to its $m$-dimensional embedding space.
\end{enumerate}
Let $\mathcal{H}_\mathcal{N} = \{ h_{N} \mid N \in \mathcal{N} \}$ denote the set of pgfs associated with the stopping model.

\begin{remark}
A real-valued function $h_N(\cdot)$ is the pgf of a
positive integer-valued random variable $N$ if and only if:
i) $h_N(0)=0$ and $h_N(1)=1$;
ii) $h_N(\cdot)$ is analytic on $[0,1)$; and
iii) all derivatives of $h_N(\cdot)$ are non-negative on $[0,1)$.
Consequently, every $h_{N} \in \mathcal{H}_\mathcal{N}$ is a
non-decreasing, convex bijection from $[0,1]$ onto $[0,1]$.
\end{remark}

\subsection{Stopping models closed under pgf composition}

Let $N_1 \circ N_2$ denote the
random variable whose pgf is given by the composition of the pgfs of $N_1$ and $N_2$,
namely $h_{N_1 \circ N_2}(t) = h_{N_1}(t) \circ h_{N_2}(t) = h_{N_1}(h_{N_2}(t))$.
By iteration, we define $N^{\circ m}$ as the random variable whose pgf is the $m$-fold composition of $h_{N}$ with itself.
By convention, $N^{\circ 1} = N$ and $N^{\circ 0} = N_I$, where
$N_I$ denotes the identity, with $h_{N_I}(t) = t$. Under this notation, it
follows that $N^{\circ k} \circ N^{\circ j} = N^{\circ(k+j)}$ for all $k,j \in \mathbb{N}$.

In general, the fact that a model $\mathcal{N}$ contains $N_1$ and $N_2$
does not guarantee that their composition also belongs to $\mathcal{N}$.
This observation motivates the following definition.
\begin{definition}
A stopping model $\mathcal{N}$
is closed under pgf composition if, for every pair $N_1, N_2 \in \mathcal{N}$,
their composition $N_1 \circ N_2$ also belongs to $\mathcal{N}$.
\end{definition}
This article characterizes finite-dimensional stopping models that
are closed under pgf composition, connected and with a non-empty
interior. These models are of interest because under them the statistical model
transformations presented next can work as statistically stable extensions.

\subsection{Model transformations based on randomly stopped sum or extremes}

Let $\mathcal{X} = \{X_{\gamma} \mid \gamma \in \mathcal{D}_\mathcal{X} \}$ be a statistical
model with a real-valued sample space, cumulative distribution functions (cdfs) $F_{X_{\gamma}}$,
survival functions $S_{X_{\gamma}}$,
and characteristic functions $\phi_{X_{\gamma}}$.

Consider a sequence $(X_1, \ldots, X_N)$ of independent and identically distributed copies of a random variable $X$,
where the stopping variable $N$ is independent of the sequence.
The \emph{$N$-stopped sum of $X$} is defined as $Y = \sum_{i=1}^{N} X_{i}$,
with characteristic function $\phi_{Y} = h_{N}(\phi_{X})$.
Given a stopping model $\mathcal{N} = \{N_{\delta} \mid \delta \in \mathcal{D}_\mathcal{N} \}$,
we define the following statistical model transformation.

\begin{definition}\label{def:rssm}
The $\mathcal{N}$-stopped sum of $\mathcal{X}$ is the statistical model
comprising all random variables $Y_{\gamma, \delta}=\sum_{i=1}^{N_\delta} X_{\gamma i}$ with characteristic
function $\phi_{Y_{\gamma,\delta}} = h_{N_{\delta}}(\phi_{X_{\gamma}})$,
$$ \mathcal{Y} = sum_{\mathcal{N}}(\mathcal{X}) =
\{Y_{\gamma, \delta} \mid \phi_{Y_{\gamma, \delta}} = h_{N_{\delta}}(\phi_{X_{\gamma}}), ~
\gamma \in \mathcal{D}_\mathcal{X}, \delta \in \mathcal{D}_\mathcal{N} \}.$$
\end{definition}
The use of randomly stopped sums to transform statistical models has a very long story in the literature, dating back to
Neyman (1939), Feller (1943) and Gurland (1957, 58), and the resulting models are sometimes called compound models.
For a comprehensive review of the literature, see Chapter 9 of Johnson, Kotz and Kemp (2005).

Similarly, the \emph{$N$-stopped maximum of $X$} is
$Y = \max(X_1,\cdots, X_N)$ with cdf $F_{Y} = h_{N}(F_{X})$, and
the \emph{$N$-stopped minimum of $X$} is $Y = \min(X_1, \cdots, X_N)$ with survival function $S_{Y} = h_{N}(S_X)$.
Given a stopping model $\mathcal{N}$, we define the following two statistical model transformations.

\begin{definition}\label{def:rsex}
The $\mathcal{N}$-stopped maximum of $\mathcal{X}$ is the statistical model comprising all r.v.'s
$Y_{\gamma, \delta}= \max(X_{\gamma 1},\cdots, X_{\gamma N_\delta})$
with cdfs $F_{Y_{\gamma, \delta}} = h_{N_{\delta}}(F_{X_{\gamma}})$:
$$ \mathcal{Y} = max_{\mathcal{N}}(\mathcal{X}) =
\{Y_{\gamma, \delta} \mid F_{Y_{\gamma, \delta}} = h_{N_{\delta}}(F_{X_{\gamma}}), ~
\gamma \in \mathcal{D}_\mathcal{X}, \delta \in \mathcal{D}_\mathcal{N} \}, $$
and the $\mathcal{N}$-stopped minimum of $\mathcal{X}$ is the model comprising all r.v.'s
$Y_{\gamma, \delta}= \min(X_{\gamma 1}, \cdots, X_{\gamma N_\delta})$
with survival function $S_{Y_{\gamma, \delta}} = h_{N_{\delta}}(S_{X_{\gamma}})$:
$$ \mathcal{Y} = min_{\mathcal{N}}(\mathcal{X}) =
\{Y_{\gamma, \delta} \mid S_{Y_{\gamma, \delta}} = h_{N_{\delta}}(S_{X_{\gamma}}), ~
\gamma \in \mathcal{D}_\mathcal{X}, \delta \in \mathcal{D}_\mathcal{N} \}. $$
\end{definition}
These transformations have been considered ever since Shaked (1975),
Shaked and Wong (1997) and Marshall and Olkin (1997).

Finally, we also consider four statistical model transformations proposed and motivated in Valero and Ginebra (2025).
They combine randomly stopped maxima or minima with two new model transformations
using the functional inverses of the pgfs, $h_N^{-1}$, that
may be interpreted as the inverse operations of randomly stopped maxima or minima.
\begin{definition}\label{def:rscb}
Let $\mathcal{X}$ and $\mathcal{N}$ be as defined above.
We define the following
two statistical model transformations associated with randomly stopped maxima,
$$ \mathcal{Y} = max_{\mathcal{N}}^{-1} \circ max_{\mathcal{N}}(\mathcal{X}) =
\{Y_{\gamma, \delta_1, \delta_2} \mid F_{Y_{\gamma, \delta_1, \delta_2}} = h_{N_{\delta_2}}^{-1} \circ h_{N_{\delta_1}}(F_{X_{\gamma}}), ~
\gamma \in \mathcal{D}_\mathcal{X},  \delta_1, \delta_2 \in \mathcal{D}_\mathcal{N} \}, $$
$$ \mathcal{Y} = max_{\mathcal{N}} \circ max_{\mathcal{N}}^{-1}(\mathcal{X}) =
\{Y_{\gamma, \delta_1, \delta_2} \mid F_{Y_{\gamma, \delta_1, \delta_2}} = h_{N_{\delta_2}} \circ h_{N_{\delta_1}}^{-1}(F_{X_{\gamma}}), ~
\gamma \in \mathcal{D}_\mathcal{X},  \delta_1, \delta_2 \in \mathcal{D}_\mathcal{N} \}, $$
and the two transformations associated with randomly stopped minima,
$$ \mathcal{Y} = min_{\mathcal{N}}^{-1} \circ min_{\mathcal{N}}(\mathcal{X}) =
\{Y_{\gamma, \delta_1, \delta_2} \mid S_{Y_{\gamma, \delta_1, \delta_2}} = h_{N_{\delta_2}}^{-1} \circ h_{N_{\delta_1}}(S_{X_{\gamma}}), ~
\gamma \in \mathcal{D}_\mathcal{X},  \delta_1, \delta_2 \in \mathcal{D}_\mathcal{N} \}, $$
$$ \mathcal{Y} = min_{\mathcal{N}} \circ min_{\mathcal{N}}^{-1}(\mathcal{X}) =
\{Y_{\gamma, \delta_1, \delta_2} \mid S_{Y_{\gamma, \delta_1, \delta_2}} = h_{N_{\delta_2}} \circ h_{N_{\delta_1}}^{-1}(S_{X_{\gamma}}), ~
\gamma \in \mathcal{D}_\mathcal{X},  \delta_1, \delta_2 \in \mathcal{D}_\mathcal{N} \}. $$
\end{definition}

\subsection{Statistical stability of statistical model transformations}

In general, a transformed model $\mathcal{T}_{\mathcal{N}}(\mathcal{X})$
need not necessarily contain the original model $\mathcal{X}$.
However, certain stopping models ensure that the transformations detailed in Subsection 2.3 act as statistical model extensions
in the sense that $\mathcal{X} \subset \mathcal{T}_{\mathcal{N}}(\mathcal{X})$ for any initial model $\mathcal{X}$.
\begin{proposition}
The transformations based on randomly stopped sums and extremes
(Definitions \ref{def:rssm} and \ref{def:rsex}) are statistical model extensions if and only if
$N_I \in \mathcal{N}$.

The composite transformations in Definition \ref{def:rscb}
always function as statistical model extensions regardless of the stopping model.
 \label{prop:stex}
\end{proposition}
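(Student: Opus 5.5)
The plan has two parts, one for each claim of Proposition \ref{prop:stex}; in both, I reduce membership of $X_\gamma$ in $\mathcal{T}_{\mathcal{N}}(\mathcal{X})$ to an identity between pgfs evaluated at the characteristic function, cdf or survival function of $X_\gamma$.

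\emph{Sufficiency for Definitions \ref{def:rssm} and \ref{def:rsex}.} If $N_I\in\mathcal{N}$, say $N_I=N_{\delta_0}$, then $h_{N_{\delta_0}}(t)=t$. Hence for every $\gamma$ we have $h_{N_{\delta_0}}(\phi_{X_\gamma})=\phi_{X_\gamma}$, $h_{N_{\delta_0}}(F_{X_\gamma})=F_{X_\gamma}$ and $h_{N_{\delta_0}}(S_{X_\gamma})=S_{X_\gamma}$. Since a characteristic function, a cdf or a survival function determines the law, $X_\gamma=Y_{\gamma,\delta_0}$ in distribution, so $\mathcal{X}\subset\mathcal{T}_{\mathcal{N}}(\mathcal{X})$ for every $\mathcal{X}$.

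\emph{Necessity.} The extension property must hold for all $\mathcal{X}$, so I will pick a single convenient test model rather than work in generality.
\begin{itemize}
\item For maxima, take $\mathcal{X}=\{X\}$ with $X\sim U(0,1)$. Then $F_X(x)=x$ on $[0,1]$. If $X\in max_{\mathcal{N}}(\mathcal{X})$, some $N\in\mathcal{N}$ satisfies $h_N(x)=x$ for all $x\in[0,1]$. By analyticity and uniqueness of power series coefficients, $p_1(N)=1$, so $N=N_I$.
\item For minima, use the same $X$ with $S_X(x)=1-x$, which also ranges over all of $[0,1]$; the same argument applies.
\item For sums, take $X$ degenerate at $1$, so $\phi_X(s)=e^{is}$. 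Membership gives $\sum_i p_i e^{ins}=e^{is}$ for all $s$. Equivalently, $Y=\sum_{j=1}^N 1=N$ has the law of $X$, so $N$ is degenerate at $1$ and $N=N_I$.
\end{itemize}
This direction is the only place requiring care: the argument must exploit that the extension property is demanded for \emph{every} $\mathcal{X}$, via a test model whose transform sweeps out enough of $[0,1]$, or of the unit circle, to pin down $h_N$. Using $N$ itself as the test for sums is the cleanest way to get there.

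\emph{Second claim.} For the composite transformations in Definition \ref{def:rscb}, take $\delta_1=\delta_2=\delta$ for any $\delta\in\mathcal{D}_\mathcal{N}$. By the Remark, each $h_N$ is a bijection of $[0,1]$, so $h_N^{-1}\circ h_N=h_N\circ h_N^{-1}=\mathrm{id}$ on $[0,1]$. Therefore $F_{Y_{\gamma,\delta,\delta}}=F_{X_\gamma}$ and $S_{Y_{\gamma,\delta,\delta}}=S_{X_\gamma}$, which gives $\mathcal{X}\subset\mathcal{T}(\mathcal{X})$ whenever $\mathcal{N}$ is nonempty. The functions $h_{N_{\delta_2}}^{-1}\circ h_{N_{\delta_1}}(F)$ and the analogous survival expressions need not be valid cdfs in general, but that issue does not arise at the diagonal $\delta_1=\delta_2$ used here.
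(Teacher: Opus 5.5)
Your proof is correct and is the direct verification from the definitions that the paper relies on; the paper states Proposition~\ref{prop:stex} without proof. Your singleton test models ($U(0,1)$ for maxima and minima, and $X\equiv 1$ for sums, where $Y=N$) make explicit the necessity direction that the paper leaves implicit.

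One correction to your closing aside, which the argument does not use: $h_{N_{\delta_2}}^{-1}\circ h_{N_{\delta_1}}$ and $h_{N_{\delta_2}}\circ h_{N_{\delta_1}}^{-1}$ are continuous increasing bijections of $[0,1]$ fixing $0$ and $1$. Applied to any cdf or survival function, they therefore always give a valid cdf or survival function, not only on the diagonal $\delta_1=\delta_2$.
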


While recursive application of these transformations typically generates a sequence of distinct and increasingly
complex models, we focus on the characterization of the specific cases where the transformation process becomes statistically stable as defined next.
\begin{definition}
A statistical model transformation $\mathcal{T}(\cdot)$ is statistically stable if it is idempotent, i.e. if
it is s.t. $\mathcal{T}(\mathcal{T}(\mathcal{X}))=\mathcal{T}(\mathcal{X})$ for any statistical model $\mathcal{X}$.
 \label{def:stsb}
\end{definition}
Statistical models $\mathcal{T}(\mathcal{X})$ obtained through a statistically stable transformation $\mathcal{T}(\cdot)$ are
themselves stable models under that transformation, in the sense that they remain invariant under successive applications of $\mathcal{T}(\cdot)$,
given that $\mathcal{T}(\mathcal{T}(\mathcal{X}))=\mathcal{T}(\mathcal{X})$.

The  following result establishes that for the transformations considered here, statistical
stability is fundamentally tied to the algebraic closure of the stopping model.
The first part of the result follows directly from the definitions, while the second part relies on results in Section 4.
\begin{proposition}
The transformations
based on randomly stopped sums and extremes (Definitions \ref{def:rssm} and \ref{def:rsex})
are statistically stable model extensions if and only if:
i) $N_I \in \mathcal{N}$ (ensuring it is an extension), and
ii) $\mathcal{N}$ is closed under pgf composition (ensuring stability).

For the composite transformations in Definition \ref{def:rscb}, closure under pgf composition is the sole
necessary and sufficient condition to be a statistically stable model extension.
 \label{prop:stst}
\end{proposition}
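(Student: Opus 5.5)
We need to show that for the sum and extreme transformations, stability together with the extension property holds exactly when $N_I\in\mathcal{N}$ and $\mathcal{N}$ is closed, and that for the composite transformations closure alone is necessary and sufficient. The plan is to work at the level of the functions being transformed: characteristic functions for sums, cdfs for maxima, survival functions for minima.

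In each case, $\mathcal{T}(\mathcal{T}(\mathcal{X}))$ consists of the laws obtained by applying $h_{N_{\delta_1}}\circ h_{N_{\delta_2}}$ to $\phi_{X_\gamma}$ (respectively $F_{X_\gamma}$, $S_{X_\gamma}$). This uses $\phi_{Y}=h_{N}(\phi_X)$ and the analogous identities for $F$ and $S$, together with independence of the nested stopping variables.

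\emph{Sufficiency for sums and extremes.} If $\mathcal{N}$ is closed, every composition $h_{N_{\delta_1}}\circ h_{N_{\delta_2}}$ equals some $h_{N}$ with $N\in\mathcal{N}$, so $\mathcal{T}(\mathcal{T}(\mathcal{X}))\subseteq\mathcal{T}(\mathcal{X})$. If in addition $N_I\in\mathcal{N}$, we get the reverse inclusion by composing with $h_{N_I}(t)=t$. The extension property is Proposition~\ref{prop:stex}.

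\emph{Necessity for sums and extremes.} Proposition~\ref{prop:stex} already forces $N_I\in\mathcal{N}$, so only closure remains. Suppose $\mathcal{N}$ is not closed, and pick $N_1,N_2\in\mathcal{N}$ with $h_{N_1}\circ h_{N_2}\neq h_N$ for every $N\in\mathcal{N}$. Take $\mathcal{X}$ to be a single distribution whose relevant function has a rich range. For maxima, use $X$ uniform on $[0,1]$, so that $F_X(x)=x$ on $[0,1]$; then $F_Y=h(F_X)$ recovers $h$ itself on $[0,1]$. Minima are handled the same way. For sums, take $X\equiv1$, so that $Y=N$ and its law is the law of $N$. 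In each case $\mathcal{T}(\mathcal{T}(\mathcal{X}))$ contains the law determined by $h_{N_1}\circ h_{N_2}$, which does not lie in $\mathcal{T}(\mathcal{X})=\{\text{laws determined by } h_N : N\in\mathcal{N}\}$. This contradicts idempotence.

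\emph{Composite transformations.} By Proposition~\ref{prop:stex} they are always extensions, so only stability needs checking. Iterating $\max_{\mathcal{N}}^{-1}\circ\max_{\mathcal{N}}$ produces functions of the form $h_{4}^{-1}\circ h_3\circ h_2^{-1}\circ h_1$ applied to $F_X$. For sufficiency, I intend to use the structural results of Section~4: a closed, connected, finite-dimensional stopping model is a commuting family, parametrized through a Koenigs function $\kappa$ in which each pgf acts as $h=\kappa^{-1}(\lambda\,\kappa)$, i.e.\ as multiplication by a scalar. In that chart, inverses and compositions reduce to ratios and products of the multipliers $\lambda$. The point to verify is that the set of ratios $\lambda_1/\lambda_2$ is closed under multiplication: given the multiplicative semigroup $\Lambda$ of multipliers, we need $\Lambda\Lambda^{-1}\cdot\Lambda\Lambda^{-1}=\Lambda\Lambda^{-1}$. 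This holds because $\Lambda$ is closed and commutative: $(a/b)(c/d)=(ac)/(bd)$ with $ac,bd\in\Lambda$.

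Necessity for the composite transformations, again with uniform $X$, amounts to showing that if $\{h_2^{-1}h_1\}$ is stable under composition then $\mathcal{N}$ is closed. This is the main obstacle, and I am not confident it holds in full generality. I would first try to deduce closure from $h_{4}^{-1}h_3h_2^{-1}h_1=h_b^{-1}h_a$ by choosing $h_2=h_3$, which collapses the left side to $h_4^{-1}h_1$, and then extract $h_1h_1'$-type products by varying the parameters and using the regularity assumptions. If that does not work, I would restrict the claim to the regular models of Section~4, which is what the text indicates the second part relies on.
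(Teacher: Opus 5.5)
Your argument for the sum and extreme transformations is correct and is the ``follows from the definitions'' argument the paper intends. Closure gives $\mathcal{T}(\mathcal{T}(\mathcal{X}))\subseteq\mathcal{T}(\mathcal{X})$, and $N_I\in\mathcal{N}$ gives the reverse inclusion. Your test models ($X$ uniform on $[0,1]$ for maxima and minima, $X\equiv1$ for sums) recover $h_N$ itself, so any composition $h_{N_1}\circ h_{N_2}\notin\mathcal{H}_\mathcal{N}$ breaks idempotence.

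Your sufficiency argument for the composite transformations is also sound, and it follows the paper's route through Theorem \ref{thm:ult2} and Proposition \ref{prop:prpts}. The paper uses the stronger fact that the ratios $\theta_2/\theta_1$ fill $(0,\infty)$, so the composite transformation returns $\{g(F_X): g\in\mathcal{G}_\mathcal{N}\}$ for the group $\mathcal{G}_\mathcal{N}$. You only use that the multiplier set $\Lambda$ is closed under products.

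You are right to lean on the hypotheses of Theorem \ref{thm:ult2} (including the non-empty interior, which you omit), because without them closure is not sufficient. The semigroup of all pgfs is closed. Yet take $h_1=h_4$ the identity, $h_2(t)=t^2$ and $h_3(t)=(t+t^2)/2$. Then the iterate $h_3\circ h_2^{-1}$ is not of the form $h_b^{-1}\circ h_a$. That would force $h_b\circ h_3=h_a\circ h_2$, which is an even function, while $h_b(h_3(t))>h_b(|h_3(-t)|)\ge h_b(h_3(-t))$ for small $t>0$.

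The genuine gap is the necessity half for the composite transformations, which you leave open, and your proposed trick cannot close it. Setting $h_2=h_3$ reduces $h_4^{-1}h_3h_2^{-1}h_1$ to $h_4^{-1}h_1$, which already lies in $\mathcal{T}(\mathcal{X})$, so it carries no information.

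More fundamentally, idempotence of the composite transformations only says that the ratio sets $\{h_2^{-1}\circ h_1\}$ and $\{h_2\circ h_1^{-1}\}$ are closed under composition. This can happen when $\mathcal{H}_\mathcal{N}$ is not closed:
\begin{itemize}
\item For $\mathcal{N}=\{N_0\}$ with $N_0\neq N_I$, both composite transformations are the identity map on models, hence trivially stable extensions.
\item For $\mathcal{N}=\{N_0^{\circ k}: k\ge1,\ k\neq2\}$, both ratio sets equal the group $\{h_{N_0}^{\circ n}: n\in\mathbb{Z}\}$ (negative $n$ meaning iterates of $h_{N_0}^{-1}$). The transformations are therefore stable extensions, although $N_0\circ N_0\notin\mathcal{N}$.
\end{itemize}
Hence necessity is false without connectedness and a non-empty interior. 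Even with them, it cannot be read off from Section 4, because Theorem \ref{thm:ult2} and Proposition \ref{prop:prpts} presuppose that $\mathcal{N}$ is closed, which is exactly what must be proved. The paper's one-line appeal to Section 4 does not cover this direction either.

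What the available tools do give is the commuting case. Suppose $\mathcal{N}\subset\mathcal{N}_{N_0}$, with $0<p_1(N_0)<1$, is connected with non-empty interior. Then its multiplier set $\Lambda$ is an interval with endpoints $0\le a<b\le1$. The ratio set contains $(a/b,b/a)$ and lies in $[a/b,b/a]$, and closure of ratios under products forces $a=0$. So $\Lambda=(0,b)$ or $(0,b]$, which is multiplicatively closed. Deriving commutativity from composite-stability alone would need a new argument about the ratio set. That set consists of analytic bijections that are not pgfs, so it falls outside Proposition \ref{prop:ff2}.
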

Stopping models closed under pgf composition turn out to be intimately related to the families of
random variables that commute. In Section 3 we analyze these commuting families, which serve as the foundation
for the characterization in Section 4 of finite-dimensional closed stopping models.
Section 5 refines these results for the sub-class of models that include $N_I$.

\section{Stopping models formed by random variables that commute}

One approach to constructing stopping models closed under pgf composition is to identify the family of
all random variables that commute with a given reference r.v., $N_0$.
We find that if $1$ is not in the support of $N_0$, the resulting stopping model has an empty interior.
Consequently, our analysis ultimately focuses on the case where $0<p_1(N_0)<1$.
The Propositions in this section are proven in Appendix 1.

\subsection{Family of random variables that commute with $N_0$, $\mathcal{N}_{N_0}$}

We define the stopping model $\mathcal{N}_{N_0}$ as the family of all random variables that commute
with a fixed reference r.v., $N_0$.
\begin{definition} Let $N_0$ be a positive integer-valued r.v. such that $N_0 \neq N_I$, with pgf $h_{N_0}(t)$.
The stopping model $\mathcal{N}_{N_0}$ is the set of
all positive integer-valued r.v.'s $N$ whose pgfs, $h_N$, commute with $h_{N_0}$ under function composition,
$$\mathcal{N}_{N_0} = \{ N \mid h_N\circ h_{N_0} = h_{N_0} \circ h_N \}. $$
The corresponding set of pgfs is denoted by $\mathcal{H}_{N_0}$.
 \label{def:smn0}
\end{definition}
An immediate consequence of the associativity of function composition is that model $\mathcal{N}_{N_0}$ is closed and includes $N_I$.
\begin{proposition}
The stopping model $\mathcal{N}_{N_0}$ is closed under pgf composition. It contains the identity $N_I$
and the sub-model $\{N_0^{\circ m}: m \in \mathbb{N} \}$, which is itself closed under composition.
\label{prop:zn1}
\end{proposition}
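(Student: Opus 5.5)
The plan is a direct verification from the definition of $\mathcal{N}_{N_0}$ together with associativity of function composition. There are four facts to check: closure, membership of $N_I$, membership of every iterate $N_0^{\circ m}$, and closure of the sub-model $\{N_0^{\circ m}: m\in\mathbb{N}\}$.

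\emph{Closure.} Take $N_1,N_2\in\mathcal{N}_{N_0}$, so $h_{N_j}\circ h_{N_0}=h_{N_0}\circ h_{N_j}$ for $j=1,2$. I would first note that $h_{N_1}\circ h_{N_2}$ is again the pgf of a positive integer-valued random variable. It is a power series with nonnegative coefficients; it vanishes at $0$, since $h_{N_1}(h_{N_2}(0))=h_{N_1}(0)=0$; and it equals $1$ at $1$. So $N_1\circ N_2$ is a legitimate stopping variable with $\Pr(N_1\circ N_2=0)=0$. Associativity then gives
$$(h_{N_1}\circ h_{N_2})\circ h_{N_0}=h_{N_1}\circ(h_{N_0}\circ h_{N_2})=(h_{N_0}\circ h_{N_1})\circ h_{N_2}=h_{N_0}\circ(h_{N_1}\circ h_{N_2}),$$
where the first equality uses commutation of $N_2$ with $N_0$ and the second uses commutation of $N_1$ with $N_0$. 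Hence $N_1\circ N_2\in\mathcal{N}_{N_0}$.

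\emph{Identity.} Since $h_{N_I}(t)=t$, we have $h_{N_I}\circ h_{N_0}=h_{N_0}=h_{N_0}\circ h_{N_I}$, so $N_I\in\mathcal{N}_{N_0}$.

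\emph{Iterates.} Every $h_{N_0}$ commutes with itself, so $N_0\in\mathcal{N}_{N_0}$. An induction using the closure just proved then gives $N_0^{\circ m}=N_0\circ N_0^{\circ(m-1)}\in\mathcal{N}_{N_0}$ for all $m\ge1$, and the case $m=0$ is $N_I$. Alternatively, one can observe directly that $h_{N_0}^{\circ m}\circ h_{N_0}=h_{N_0}^{\circ(m+1)}=h_{N_0}\circ h_{N_0}^{\circ m}$.

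\emph{Closure of the sub-model.} This follows from the identity $N_0^{\circ k}\circ N_0^{\circ j}=N_0^{\circ(k+j)}$ already recorded in Section 2.2.

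No step is a real obstacle. The only point requiring care is confirming that composition preserves the class of pgfs of positive integer-valued random variables, in particular that $p_0=0$ is preserved. That is the short check done in the closure step, and it can also be read off from the remark characterizing pgfs.
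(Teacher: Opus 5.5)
Your proof is correct and follows the same route as the paper, which also obtains closure from associativity together with the two commutation hypotheses, and membership of the iterates from the fact that $h_{N_0}$ commutes with its own iterates. You spell out the associativity chain explicitly, and you add the check, omitted in the paper, that $h_{N_1}\circ h_{N_2}$ is again the pgf of a positive integer-valued variable.
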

The following result demonstrates that whether $1$ is in the support of $N_0$ or not determines whether
$1$ is in the support of all other elements in $\mathcal{N}_{N_0}$.
\begin{proposition}
Let $N_0$ and $N \in \mathcal{N}_{N_0}$ be non-identity positive integer-valued r.v.'s. Then,
$p_1(N)=0$ if and only if $p_1(N_0) = 0$.
 \label{prop:znz}
\end{proposition}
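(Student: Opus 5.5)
The argument compares the lowest-order Taylor coefficients at $t=0$ on both sides of the commutation identity $h_N\circ h_{N_0}=h_{N_0}\circ h_N$. The hypotheses are symmetric in $N$ and $N_0$: both are non-identity, and commutation is a symmetric relation. So it suffices to prove one implication, namely that $p_1(N_0)>0$ forces $p_1(N)>0$. The converse then follows by exchanging the roles of $N$ and $N_0$.

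Set $q=p_1(N_0)$ and assume $q>0$. Since $N_0\neq N_I$, we have $q<1$, because $q=1$ would make $N_0$ degenerate at one. Suppose for contradiction that $p_1(N)=0$. Let $a\ge 2$ be the smallest integer in the support of $N$, and write $c=p_a(N)>0$, so that $h_N(t)=c\,t^a+O(t^{a+1})$. Also $h_{N_0}(t)=q\,t+O(t^2)$, and $h_N(0)=h_{N_0}(0)=0$ because $\Pr(N=0)=0$ throughout the paper. All the power series involved have nonnegative coefficients and converge on $[0,1)$, so their compositions can be expanded as formal power series at $0$ and coefficients compared.

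Next I extract the coefficient of $t^a$ on each side.
\begin{itemize}
\item On the left, $h_N(h_{N_0}(t))=\sum_{i\ge a}p_i(N)\,h_{N_0}(t)^i$. Each term $h_{N_0}(t)^i$ has order $i$, so only $i=a$ contributes to $t^a$, giving $c\,q^a$.
\item On the right, $h_{N_0}(h_N(t))=q\,h_N(t)+\sum_{j\ge2}p_j(N_0)\,h_N(t)^j$. For $j\ge 2$ the term $h_N(t)^j$ has order $ja\ge 2a>a$, so the coefficient of $t^a$ is $q\,c$.
\end{itemize}

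Equating the two coefficients gives $c\,q^a=c\,q$. Since $c>0$ and $q>0$, this means $q^{a-1}=1$ with $a-1\ge1$, so $q=1$. This contradicts $q<1$, and therefore $p_1(N)>0$.

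There is no real obstacle here. The only points needing care are that $q\neq 1$ (this is exactly where the hypothesis $N_0\neq N_I$ is used, and likewise $N\neq N_I$ in the symmetric step) and that the higher-order terms on the right genuinely start at order $2a>a$.
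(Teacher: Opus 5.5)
Your proof is correct and takes essentially the same approach as the paper: both compare the lowest-order Taylor coefficients in $h_N\circ h_{N_0}=h_{N_0}\circ h_N$ and obtain the converse by symmetry. The paper lets $N_0$ be the variable with $p_1=0$ (leading order $m_0>1$) and reads off $p_1(N)\,p_{m_0}=p_{m_0}\,p_1(N)^{m_0}$, so $p_1(N)\in\{0,1\}$; you do the same computation with the roles of $N$ and $N_0$ exchanged and reach the contradiction $p_1(N_0)=1$.
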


We next establish that when $p_1(N_0) = 0$, the model $\mathcal{N}_{N_0}$ has an empty interior.
\begin{proposition}
If $p_1(N_0)=0$, then every $N \in \mathcal{N}_{N_0}$
can be parametrized in an identifiable manner by the order of the first non-zero term in its pgf power series.
Consequently, the parameter space of $\mathcal{N}_{N_0}$ is a subset of $\mathbb{N}^{+}.$
\label{prop:zn2}
\end{proposition}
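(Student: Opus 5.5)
Write $h_0=h_{N_0}$ and let $k_0\ge 2$ be the order of its first non-zero coefficient, so $h_0(t)=a t^{k_0}+O(t^{k_0+1})$ with $a=p_{k_0}(N_0)>0$. Since $N_0\neq N_I$ and $p_1(N_0)=0$, we indeed have $k_0\ge 2$. By Proposition \ref{prop:znz}, every non-identity $N\in\mathcal{N}_{N_0}$ has $p_1(N)=0$. So its pgf is $h_N(t)=b t^{k}+O(t^{k+1})$ with $k\ge 2$ and $b>0$, while the identity corresponds to $k=1$.

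The claim is that the map $N\mapsto k$ is injective on $\mathcal{N}_{N_0}$. I will prove this by a Böttcher-type uniqueness argument on formal power series.

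The first step is to compare lowest-order terms in $h_N\circ h_0=h_0\circ h_N$. On the left the lowest term is $b a^{k} t^{k k_0}$, and on the right it is $a b^{k_0} t^{k k_0}$. Hence $b^{k_0-1}=a^{k-1}$, and since $b>0$ this determines $b$ uniquely from $k$.

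The second step is induction on the higher coefficients. Suppose $g(t)=b t^k+\sum_{j>k} c_j t^j$ and $\tilde g(t)=b t^k+\sum_{j>k}\tilde c_j t^j$ both commute with $h_0$, and let $m>k$ be the first index with $c_m\neq\tilde c_m$. Write $d=c_m-\tilde c_m$ and compare the two commutation identities modulo higher-order terms.

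In $g\circ h_0 - \tilde g\circ h_0$, the leading difference is $d\,(a t^{k_0})^m$, which sits in degree $m k_0$.

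In $h_0\circ g - h_0\circ\tilde g$, write $g=\tilde g+d t^m+\dots$. By the mean value theorem for power series, the difference is $h_0'(\tilde g)\,d t^m+\dots$. Here $h_0'(\tilde g)=k_0 a b^{k_0-1}t^{k(k_0-1)}+\dots$, so the leading difference is $k_0 a b^{k_0-1} d\, t^{m+k(k_0-1)}$, in degree $m+k k_0-k$.

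Both differences must vanish, because both pairs of composites are equal. The left one forces $d=0$ unless it is cancelled, and it cannot be cancelled: the two sides give independent constraints, so each leading difference must vanish on its own. The cleanest way to set this up is to subtract the two commutation identities, $g\circ h_0-\tilde g\circ h_0 = h_0\circ g-h_0\circ\tilde g$. Its two sides have lowest-order terms in degrees $mk_0$ and $m+k(k_0-1)$. Since $m>k$ and $k_0\ge 2$, we get $mk_0-(m+kk_0-k)=(m-k)(k_0-1)>0$, so these degrees differ. Therefore the lowest term on the right, with nonzero coefficient $k_0ab^{k_0-1}d$, must vanish, which gives $d=0$, a contradiction. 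Hence $g=\tilde g$ as formal power series, and therefore as pgfs.

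This establishes identifiability. The parameter space is the set of attained orders $k$, which is a subset of $\mathbb{N}^{+}$, with $k=1$ corresponding to $N_I$.

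I expect the main obstacle to be the careful bookkeeping of the lowest-order terms in the difference $h_0\circ g-h_0\circ\tilde g$. One has to justify that its leading term really is $k_0 a b^{k_0-1}d\,t^{m+k(k_0-1)}$, with no earlier cancellation. Expanding $(g)^j-(\tilde g)^j$ for each $j\ge k_0$ shows that the $j=k_0$ term dominates, and that the terms with $j>k_0$ have order at least $m+k(j-1)>m+k(k_0-1)$.
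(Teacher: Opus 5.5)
Your proof is correct and follows the paper's own argument: fix the leading coefficient $b$ from the $t^{kk_0}$ term, then show each higher coefficient is forced, because the first differing coefficient enters $h_{N_0}\circ h_N$ already at degree $m+k(k_0-1)$ but enters $h_N\circ h_{N_0}$ only at degree $mk_0$; the paper presents this as a recursive determination of $a_{m+n}$ rather than your first-differing-index contradiction. One sentence is wrong as written, namely the claim that each leading difference must vanish on its own; the two differences are equal to each other, not separately zero, and the subtraction argument you give right after that sentence is the correct formulation and suffices.
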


\subsection{Set of analytic functions that commute with the pgf of $N_0$, $\mathcal{G}_{N_0}$}

We now extend the set $\mathcal{H}_{N_0}$ to a broader class by relaxing the
requirement that derivatives beyond the first must be non-negative.
This allows us to include all non-constant functions that are analytic on $[0,1)$ and commute with $h_{N_0}$,
even if they are not a pgf. That will provide the mathematical
framework for characterizing the stopping models manifold structure.
\begin{proposition}\label{prop:gincrb}
Let $N_0$ be a positive integer-valued r.v. such that $N_0 \neq N_I$, with pgf $h_{N_0}(t)$.
Any function $g$ that is analytic on $[0,1)$ and commutes with $h_{N_0}$ under composition is either:
\begin{enumerate}
\item
One of the constant functions $g(t)\equiv0$ or $g(t)\equiv1$; or
\item
A strictly increasing bijection from $(0,1]$ onto itself such that $g(0)=0$ and $g(1)=1$.
\end{enumerate}
\end{proposition}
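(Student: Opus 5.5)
The plan is to work with $h=h_{N_0}$ and exploit the dynamics of its iterates on $[0,1]$. I will take $g$ to map $[0,1)$ into $[0,1]$, so that $h\circ g$ is defined. First I record the basic facts about $h$. Since $N_0\neq N_I$ and $p_0(N_0)=0$, the function $h$ is convex with $h(0)=0$ and $h(1)=1$, so $h(t)<t$ for $t\in(0,1)$. Hence the only fixed points of $h$ in $[0,1]$ are $0$ and $1$, and $h^{\circ n}(t)\to 0$ for every $t\in[0,1)$. Also $h'(t)>0$ for $t\in(0,1)$, because $h$ is a non-constant power series with non-negative coefficients.

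\textbf{Step 1 (boundary values).} Evaluating $g\circ h=h\circ g$ at $t=0$ gives $g(0)=h(g(0))$, so $g(0)\in\{0,1\}$.
\begin{itemize}
\item Suppose $g(0)=1$. For any $t\in(0,1)$ we have $g(h^{\circ n}(t))=h^{\circ n}(g(t))$. The left side tends to $g(0)=1$ by continuity of $g$ at $0$. The right side tends to $0$ unless $g(t)=1$. Hence $g\equiv 1$ on $[0,1)$.
\item Suppose $g(0)=0$ and $g(t_0)=0$ for some $t_0\in(0,1)$. Then $g(h^{\circ n}(t_0))=h^{\circ n}(0)=0$, so $g$ has zeros accumulating at $0$. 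Analyticity at $0$ then forces $g\equiv 0$.
\end{itemize}
So any non-constant $g$ satisfies $g(0)=0$ and $g(t)>0$ on $(0,1)$.

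\textbf{Step 2 (strict monotonicity).} Differentiating the commutation relation gives
$$g'(h(t))\,h'(t)=h'(g(t))\,g'(t).$$
If $g'(t_0)=0$ for some $t_0\in(0,1)$, then $g'(h(t_0))=0$ because $h'(t_0)>0$. Inductively $g'(h^{\circ n}(t_0))=0$ for all $n$. These zeros of $g'$ accumulate at $0$, where $g$ is analytic, so $g'\equiv 0$ and $g$ is constant, which reduces to Step 1. Thus for non-constant $g$, $g'$ has constant sign on $(0,1)$. Since $g(0)=0$ and $g>0$ on $(0,1)$, that sign is positive, and $g$ is strictly increasing.

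\textbf{Step 3 (the endpoint $1$ and surjectivity).} Let $L=\lim_{t\to1^-}g(t)$, which exists by monotonicity and satisfies $L\in(0,1]$. Letting $t\to1^-$ in $g(h(t))=h(g(t))$ and using $h(t)\to1$ and continuity of $h$ gives $L=h(L)$. Since $L>0$, this forces $L=1$, so setting $g(1)=1$ extends $g$ continuously. By the intermediate value theorem and strict monotonicity, $g$ is then a strictly increasing bijection of $(0,1]$ (indeed of $[0,1]$) onto itself.

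\textbf{Main obstacle.} The delicate point is the range of $g$. The argument uses that $g$ takes values where $h$ is defined and has only the fixed points $0$ and $1$. If $g$ could leave $[0,1]$, then $h$ might have an extra fixed point beyond $1$, or $h\circ g$ might be undefined. I would handle this by reading the hypothesis as requiring the composition $h\circ g$ to make sense on $[0,1)$, i.e.\ $g([0,1))\subset[0,1]$. The accumulation-of-zeros arguments in Steps 1 and 2 rely crucially on analyticity at the point $0$ itself, which is why the domain is taken to be $[0,1)$ rather than $(0,1)$.
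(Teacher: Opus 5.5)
Your argument is correct under the range hypothesis $g([0,1))\subset[0,1]$. It rests on the same two ingredients as the paper's proof:
\begin{itemize}
\item the orbit $h^{\circ n}(t_0)\to 0$, combined with the identity theorem at $0$, shows that $g'$ has no zeros in $(0,1)$;
\item $0$ and $1$ are the only fixed points of $h$ in $[0,1]$.
\end{itemize}
The difference is in the ordering, and your version is the more complete one.

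The paper proves $g'\neq 0$ first. It then asserts that $g$ is increasing ``since $g$ must map $[0,1]$ to its domain and $h_{N_0}$ is increasing,'' and only afterwards reads off $g(0)=0$ and $g(1)=1$ from the fixed-point equations. But $g'\neq 0$ gives only strict monotonicity. Nothing in that sentence excludes a decreasing $g$ with $g(0)=1$ and $g(1^-)=0$, which satisfies both fixed-point conditions. Your Step 1, comparing $g(h^{\circ n}(t))\to g(0)$ with $h^{\circ n}(g(t))\to 0$, is exactly what rules this case out. Your Step 3 also treats $g(1)$ as a one-sided limit instead of evaluating $g$ outside its domain.

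Your explicit range hypothesis is genuinely needed, not a convenience; the paper only gestures at it. For $h(t)=(t+t^3)/2$, the function $g(t)=-t$ is analytic on $[0,1)$, commutes with $h$, and is decreasing.

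One line should be added. Differentiating $h\circ g$ at the orbit points in Step 2 requires $g<1$ there, because $h'(1)$ may be infinite. This follows from your Step 1 reasoning: if $g(s)=1$ for some $s\in(0,1)$, then $g(h^{\circ n}(s))=h^{\circ n}(1)=1$ for all $n$. Hence $g(0)=1$, and so $g\equiv 1$.
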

Below we  extend $\mathcal{H}_{N_0}$ by considering all non-constant analytic functions that
commute with $h_{N_0}$.
\begin{definition}\label{def:smn1}
Let $N_0$ be a positive integer-valued r.v. with $N_0 \neq N_I$.
The set $\mathcal{G}_{N_0}$ is defined to be the family of all non-constant analytic functions
on $[0,1)$ that commute with $h_{N_0}$:
$$ \mathcal{G}_{N_0} = \{ g \mid \mbox{is analytic and} ~ g\circ h_{N_0} = h_{N_0} \circ g ~ \mbox{on} ~ [0,1) \}. $$
 \end{definition}
\begin{remark}
The functional inverse $h_N^{-1}(\cdot)$ of a pgf, satisfies $h_N^{-1}(0)=0$ and $h_N^{-1}(1)=1$, and
if $h_N$ commutes with $h_{N_0}$, then so does $h_N^{-1}$. However, even though $h_N^{-1}(\cdot)$
is always analytic on $(0,1)$, its analicity at $t=0$ depeds on $h_N^{\prime}(0)$. Specifically,
$h_N^{-1} \in \mathcal{G}_{N_0}$ if and only if $h_N^{\prime}(0) = p_1(N) \neq 0$. Hence, the critical role of
$p_1(N_0)$ in determining whether $\mathcal{G}_{N_0}$ is large enough to include $h_N^{-1}$.
\end{remark}
The properties established for $\mathcal{N}_{N_0}$
extend to the broader class, $\mathcal{G}_{N_0}$.
\begin{proposition}
The set $\mathcal{G}_{N_0}$ is closed under function composition.
 \label{prop:gcfc}
\end{proposition}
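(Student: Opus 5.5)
The statement to prove is that $\mathcal{G}_{N_0}$ is closed under composition. The argument mirrors Proposition \ref{prop:zn1}: associativity of composition gives commutativity, and Proposition \ref{prop:gincrb} handles analyticity and non-constancy.

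Take $g_1, g_2 \in \mathcal{G}_{N_0}$ and set $g = g_1 \circ g_2$. Three things must be checked: (i) $g$ is well defined and analytic on $[0,1)$; (ii) $g$ is non-constant; (iii) $g$ commutes with $h_{N_0}$ on $[0,1)$.

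\emph{Step (i).} By Proposition \ref{prop:gincrb}, each non-constant $g_j$ is a strictly increasing bijection of $[0,1]$ onto itself with $g_j(0)=0$ and $g_j(1)=1$. In particular $g_2$ maps $[0,1)$ into $[0,1)$, so $g_1 \circ g_2$ is defined on $[0,1)$.

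For analyticity, fix $t_0 \in [0,1)$. The function $g_2$ is analytic at $t_0$ with value $g_2(t_0) \in [0,1)$, where $g_1$ is analytic. A composition of real-analytic functions is real-analytic, so $g$ is analytic at $t_0$.

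There is one subtlety at the endpoint $t_0 = 0$. Analyticity on $[0,1)$ should mean that the function extends analytically to a two-sided neighbourhood of $0$. This is harmless: $g_2$ extends to such a neighbourhood with $g_2(0)=0$, and $g_1$ extends to a neighbourhood of $0$, so the composition of the extensions is analytic near $0$.

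\emph{Step (ii).} A composition of two strictly increasing functions is strictly increasing, so $g$ is non-constant.

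\emph{Step (iii).} On $[0,1)$, using associativity and then commutativity of each $g_j$ with $h_{N_0}$,
$$g \circ h_{N_0} = g_1 \circ (g_2 \circ h_{N_0}) = g_1 \circ (h_{N_0} \circ g_2) = (g_1 \circ h_{N_0}) \circ g_2 = h_{N_0} \circ g_1 \circ g_2 .$$

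Each equality uses an identity valid on $[0,1)$ and evaluated at points of $[0,1)$. This holds because $h_{N_0}$ maps $[0,1)$ into $[0,1)$, being a pgf with $h_{N_0} \neq$ identity bijection, and $g_2$ maps $[0,1)$ into $[0,1)$ by Step (i). So the arguments never leave the domain on which the commutation identities are assumed.

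The main point requiring care is that $g_2$ keeps $[0,1)$ inside the domain where $g_1$ is analytic and commutes with $h_{N_0}$. Proposition \ref{prop:gincrb} supplies exactly this, and it rules out the constant cases. Everything else is associativity of composition.
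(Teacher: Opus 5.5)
Your proof is correct and follows essentially the paper's route. The paper gives no separate argument for this proposition beyond the associativity reasoning used for Proposition~\ref{prop:zn1}, which is your Step (iii). Your Steps (i) and (ii) use Proposition~\ref{prop:gincrb} to make explicit what the paper leaves implicit: that $g_2$ keeps $[0,1)$ inside the domain of $g_1$, and that $g_1\circ g_2$ is analytic and non-constant. One small wording point: $h_{N_0}$ maps $[0,1)$ into itself because it is the pgf of a positive integer-valued variable, not because it differs from the identity.
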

\begin{proposition}
Let $g \in \mathcal{G}_{N_0}$ be such that $g(t)\neq t$. Then, $g^{\prime}(0) = 0$ if and only if $p_1(N_0) = 0$.
 \label{prop:init}
\end{proposition}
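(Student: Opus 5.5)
The plan is to compare the two sides of the commutation relation $g\circ h_{N_0} = h_{N_0}\circ g$ at the fixed point $t=0$ and read off first-order terms. By Proposition \ref{prop:gincrb}, every $g\in\mathcal{G}_{N_0}$ satisfies $g(0)=0$, $g(1)=1$ and is a strictly increasing bijection of $[0,1]$. Also $h_{N_0}(0)=0$. So both $g$ and $h_{N_0}$ fix $0$. Both are analytic there, so each has an expansion $g(t)=a_k t^k+O(t^{k+1})$ and $h_{N_0}(t)=b_j t^j+O(t^{j+1})$ with $a_k,b_j\neq 0$, $k,j\ge 1$. In particular $g'(0)=0$ iff $k\ge 2$, and $p_1(N_0)=0$ iff $j\ge2$.

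First, expand both sides of the commutation relation near $0$:
\[
g(h_{N_0}(t)) = a_k b_j^k t^{jk}+\dots, \qquad h_{N_0}(g(t)) = b_j a_k^j t^{jk}+\dots .
\]
The orders agree automatically, so the leading exponents give no information. I therefore switch to the multiplier equation, which is the real content.

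Suppose $p_1(N_0)=b_1=\lambda\in(0,1)$. We have $\lambda<1$ because $N_0\neq N_I$. Then $0$ is an attracting fixed point with nonzero multiplier. Suppose also that $g'(0)=0$, so $g(t)=a_k t^k+\dots$ with $k\ge2$. Koenigs linearization gives an analytic $\sigma$ near $0$ with $\sigma(0)=0$, $\sigma'(0)=1$ and $\sigma\circ h_{N_0}=\lambda\sigma$. Set $G=\sigma\circ g\circ\sigma^{-1}$. It commutes with $t\mapsto\lambda t$, so $G(\lambda t)=\lambda G(t)$. Writing $G(t)=\sum c_n t^n$ gives $c_n\lambda^n=\lambda c_n$, hence $c_n=0$ for all $n\neq1$. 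Thus $G$ is linear and $g=\sigma^{-1}(c_1\sigma)$. Since $g$ is non-constant, $c_1\neq0$, so $g'(0)=c_1\neq 0$, a contradiction. This proves that $p_1(N_0)\neq0$ implies $g'(0)\neq0$.

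Conversely, suppose $p_1(N_0)=0$, so $j\ge2$, and suppose $g'(0)=a_1\neq0$. I compare the coefficients of $t^{j}$ in the commutation relation. The left side gives $a_1 b_j$, and the right side gives $b_j a_1^j$. Hence $a_1^{j-1}=1$, and since $g$ is increasing, $a_1=1$. So $g(t)=t+a_m t^m+\dots$ with some first nonzero $a_m$, $m\ge2$, unless $g(t)=t$, which is excluded. Now compare the coefficients of $t^{j+m-1}$, the first order at which $g$ and the identity differ on each side:
\begin{itemize}
\item $h_{N_0}(g(t))$ contributes $b_j\, j\, a_m$;
\item $g(h_{N_0}(t))$ contributes only terms from $h_{N_0}$ itself, because $a_m h_{N_0}^m$ has order $jm>j+m-1$ when $j,m\ge2$.
\end{itemize}
The pure $h_{N_0}$ terms match on both sides, so we get $j b_j a_m=0$, which is impossible. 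Hence $g'(0)=0$.

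The main obstacle is the bookkeeping in this last coefficient comparison. An alternative is to invoke the Böttcher coordinate $\phi$ with $\phi\circ h_{N_0}=\phi^j$: conjugating, $G$ commutes with $t\mapsto t^j$ and $G'(0)=1$, which forces $G(t)=\omega t$ with $\omega^{j-1}=1$, so $\omega=1$ and $g$ is the identity, a contradiction. I would present this Böttcher argument as the cleaner route.
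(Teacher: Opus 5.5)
Your proof is correct in both directions. It differs partly from the paper's route, though the paper gives no separate proof of this proposition: Appendix 1 skips it. The intended argument is evidently the one used for Proposition \ref{prop:znz}, namely to compare the lowest-order coefficients in $g\circ h_{N_0}=h_{N_0}\circ g$ and then appeal to symmetry.

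\textbf{Direction $p_1(N_0)=0\Rightarrow g'(0)=0$.} This is that same comparison at $t^{j}$, which gives $a_1^{j-1}=1$. You then add a step the paper's template glosses over. For a pgf, $p_1(N)=1$ forces $N=N_I$. For a general $g\in\mathcal{G}_{N_0}$, however, $g'(0)=1$ does not by itself give $g(t)=t$. Your comparison of the $t^{j+m-1}$ coefficients, using $jm>j+m-1$, i.e. $(j-1)(m-1)>0$, is what actually excludes this case. Your argument is therefore more complete than a direct transcription of the paper's template would be.

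\textbf{Direction $p_1(N_0)\neq 0\Rightarrow g'(0)\neq 0$.} Here you use Koenigs linearization, which is valid but heavier than needed. You wrote down the leading-coefficient identity $a_k b_j^k=b_j a_k^j$ and set it aside because the exponents agree, but the coefficients already settle this direction. With $j=1$, $b_1=\lambda$ and $k\ge 2$, it reads $a_k\lambda^k=\lambda a_k$, so $\lambda^{k-1}=1$. That forces $\lambda=1$, i.e. $N_0=N_I$, a contradiction. This is exactly the ``by symmetry'' step in the paper's proof of Proposition \ref{prop:znz}.

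So elementary coefficient matching suffices in both directions, and your B\"ottcher alternative is likewise optional. What the Koenigs and B\"ottcher conjugations buy is a more conceptual picture: they show $g$ is linear in the same coordinate that linearizes $h_{N_0}$, anticipating Theorem \ref{thm:par}. The cost is reliance on external linearization theorems, which the coefficient argument avoids.
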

\begin{proposition}
If $p_1(N_0) = 0$, the elements in $\mathcal{G}_{N_0}$ are uniquely identified
by the order of the first non-zero term in their power series expansion
around $t=0$. In that case, the parameter space of $\mathcal{G}_{N_0}$ is $\mathbb{N}^{+}$.
\label{prop:init2}
\end{proposition}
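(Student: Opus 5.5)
We need to prove Proposition \ref{prop:init2}: if $p_1(N_0)=0$, then each $g\in\mathcal{G}_{N_0}$ is determined by the order $k$ of its first non-zero Taylor coefficient at $0$, and the parameter space is $\mathbb{N}^{+}$.

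Write $h_{N_0}(t)=a t^{r}(1+O(t))$ with $r\ge 2$ the smallest index with $p_r(N_0)>0$; here $r\ge 2$ because $p_1(N_0)=0$ and $N_0\neq N_I$. Let $g\in\mathcal{G}_{N_0}$. By Proposition \ref{prop:gincrb}, $g(0)=0$, so $g(t)=b t^{k}(1+O(t))$ with $k\ge1$ and $b\neq 0$.

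The plan has four steps.

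\textbf{Step 1: reduce to the Böttcher normal form.} Since $0$ is a superattracting fixed point of $h_{N_0}$ of order $r$, Böttcher's theorem gives a local analytic conjugacy $\phi(t)=c\,t(1+O(t))$ near $0$, with $c^{r-1}=a$, such that
\[
\phi\circ h_{N_0}=(\phi)^{r}, \qquad \text{i.e. } \phi\circ h_{N_0}\circ\phi^{-1}(z)=z^{r}.
\]
Set $G=\phi\circ g\circ\phi^{-1}$ near $0$. Then $G(z)=\beta z^{k}(1+O(z))$ is analytic, and the commutation relation becomes $G(z^{r})=G(z)^{r}$.

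\textbf{Step 2: solve the functional equation.} Write $G(z)=\beta z^{k}u(z)$ with $u(0)=1$. Substituting gives
\[
\beta z^{rk}u(z^{r})=\beta^{r}z^{rk}u(z)^{r}.
\]
Comparing leading coefficients gives $\beta^{r-1}=1$, and then $u(z^{r})=u(z)^{r}$.

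To show $u\equiv 1$, suppose not. Then $u(z)=1+\gamma z^{j}+\dots$ with $\gamma\neq 0$. The left side is $1+\gamma z^{rj}+\dots$, while the right side is $1+r\gamma z^{j}+\dots$. Since $rj>j$, comparing the $z^{j}$ coefficients forces $r\gamma=0$, a contradiction. Hence $G(z)=\beta z^{k}$ with $\beta$ an $(r-1)$-th root of unity.

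\textbf{Step 3: pin down $\beta$ using reality and positivity.} Since $g$ is real and strictly increasing on $(0,1)$, and $\phi$ can be chosen real with $c>0$ (take the positive root, as $a>0$), $\beta$ must be real and positive. Therefore $\beta=1$. This choice of the real branch of $\phi$ is the step I expect to need the most care. The normalization of $\phi$ is only unique up to an $(r-1)$-th root of unity, and one must check that, with the real positive normalization, $G$ maps small positive reals to positive reals, so that $\beta>0$.

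\textbf{Step 4: conclude identifiability and describe the parameter space.} Near $0$ we now have $g=\phi^{-1}\circ(\phi)^{k}$, which is determined by $k$ alone. By analyticity on $[0,1)$ and the identity theorem, $g$ is determined on all of $[0,1)$. So $k\mapsto g$ is injective, giving an identifiable parametrization by $k\in\mathbb{N}^{+}$.

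For the parameter space to be exactly $\mathbb{N}^{+}$, I still need surjectivity. The natural candidate is $g_k=\phi^{-1}\circ\phi^{k}$, and the task is to show it extends analytically to $[0,1)$ and commutes with $h_{N_0}$ there. Commutation holds near $0$ by construction. For the extension, I would use $h_{N_0}^{\circ n}\to 0$ on $[0,1)$ and define
\[
g_k=h_{N_0}^{-\circ n}\circ g_k\circ h_{N_0}^{\circ n}
\]
using the inverse branch of $h_{N_0}$, which is analytic on $(0,1)$ and real-valued, since $h_{N_0}$ is strictly increasing there. This extension is the main technical obstacle. If it resists, I would settle for showing that the parameter space is a subset of $\mathbb{N}^{+}$, in line with Proposition \ref{prop:zn2}.
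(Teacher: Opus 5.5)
Your argument is correct, including the extension you sketch, and it takes a different route from the paper. The paper does not write out a proof of this proposition. It presents it as the $\mathcal{G}_{N_0}$-analogue of Proposition \ref{prop:zn2}, whose proof in Appendix 1 is a coefficient-matching induction in the original coordinate: the $t^{m_0m}$ coefficients of $g\circ h_{N_0}=h_{N_0}\circ g$ force $a_m^{m_0-1}=p_{m_0}^{m-1}$, and the $t^{m_0m+n}$ coefficients determine $a_{m+n}$ from $a_m,\dots,a_{m+n-1}$.

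That argument is elementary and avoids B\"ottcher's theorem, but it only gives uniqueness. At best it produces a formal series for each $m$, with no control of convergence or of continuation to $[0,1)$. So the claim that the parameter space is all of $\mathbb{N}^{+}$, rather than a subset as in Proposition \ref{prop:zn2}, is not addressed there. Your conjugation to $z\mapsto z^{r}$ reduces everything to $G(z^r)=G(z)^r$. It yields the closed form $g=\phi^{-1}\circ\phi^{k}$, the superattracting counterpart of $g_\theta=K_{N_0}^{-1}(\theta K_{N_0})$ in Theorem \ref{thm:par}. It also gives a convergent local solution for every $k$, which is exactly what the existence half needs.

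\textbf{Step 3.} You need not worry about the branch of $\phi$. The leading coefficient of $G=\phi\circ g\circ\phi^{-1}$ is $\beta=b\,c^{1-k}$, so $\beta^{r-1}=1$ and $c^{r-1}=a$ give $b^{r-1}=a^{k-1}$, which is exactly the paper's leading-coefficient relation. Since $b>0$ by Proposition \ref{prop:gincrb}, this determines $b$, hence $\beta$, uniquely. For existence, your Step 2 with $k=1$ shows that B\"ottcher coordinates are unique up to an $(r-1)$-th root of unity. Hence choosing $c>0$ forces $\overline{\phi(\bar z)}=\phi(z)$, and the germ $g_k=\phi^{-1}\circ\phi^{k}$ is real with leading coefficient $c^{k-1}>0$.

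\textbf{Extension.} Take $\delta>0$ such that this germ is analytic on $[0,\delta)$, takes values in $(0,1)$ on $(0,\delta)$, and commutes with $h_{N_0}$ there. For $t\in[0,1)$, choose $n$ with $h_{N_0}^{\circ n}(t)<\delta$, which is possible since $h_{N_0}(s)<s$ on $(0,1)$. Then set $g_k(t)=(h_{N_0}^{\circ n})^{-1}\bigl(g_k(h_{N_0}^{\circ n}(t))\bigr)$. Independence of $n$, and commutation on all of $[0,1)$, follow from the local commutation together with the identities $(h_{N_0}^{\circ(n+1)})^{-1}\circ h_{N_0}=h_{N_0}\circ(h_{N_0}^{\circ(n+1)})^{-1}=(h_{N_0}^{\circ n})^{-1}$ on $[0,1]$. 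Analyticity at points of $(0,1)$ holds because $h_{N_0}'>0$ on $(0,1)$, and the inner value $g_k(h_{N_0}^{\circ n}(t))$ lies in $(0,1)$. It is the positive derivative, not mere strict monotonicity, that makes the inverse branch analytic. So no fallback is needed: the parameter space is exactly $\mathbb{N}^{+}$.
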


\begin{remark}
Propositions \ref{prop:zn2} and \ref{prop:init2} establish that the condition $p_1(N_0) \neq 0$ is necessary
for $\mathcal{N}_{N_0}$ and $\mathcal{G}_{N_0}$ to have a non-empty interior. Since our focus is on
stopping models connected and with a non-empty interior,
we shall assume $0<p_1(N_0)<1$ throughout the remainder of this section.
\end{remark}

\subsection{Parametrization of $\mathcal{G}_{N_0}$ and $\mathcal{N}_{N_0}$ when $0<p_1(N_0)<1$}

The primary result of this section establishes that when $1$ is in the support of $N_0$, all functions in
the commuting family $\mathcal{G}_{N_0}$ are uniquely determined by their derivative at the origin, $\theta = g^{\prime}(0)$.
This derivative serves as a global identifiable parameter.
\begin{theorem}
Let $N_0$ be positive integer-valued with $0 < p_1(N_0) < 1$.
The set of all non-constant analytic functions on $[0,1)$ that commute with $h_{N_0}$ is given by:
$$ \mathcal{G}_{N_0} = \{g_{\theta}(t) = K_{N_0}^{-1}(\theta  K_{N_0}(t)) \mid \theta = g_{\theta}^{\prime}(0) \in (0,\infty)  \},     $$
where $K_{N_0}$ is the Koenigs function defined by:
$$ K_{N_0}(t) = \lim_{n\rightarrow\infty}\frac{h_{N_0}^{\circ n}(t)}{[h_{N_0}^{\prime}(0)]^n}. $$
The set $\mathcal{G}_{N_0}$ admits a unique identifiable parametrization via
$\theta = g^{\prime}(0)$, and the corresponding parameter space is $(0,\infty)$.
\label{thm:par}
\end{theorem}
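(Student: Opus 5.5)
Set $\lambda = h_{N_0}'(0) = p_1(N_0) \in (0,1)$ and $h = h_{N_0}$. The plan has four steps: construct the Koenigs function $K = K_{N_0}$, show that every $g \in \mathcal{G}_{N_0}$ is determined by $\theta = g'(0)$ through the formula $g = K^{-1}(\theta K)$, show that every $\theta > 0$ is attained, and conclude identifiability.

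\textbf{Step 1: the Koenigs function.} Since $h(0)=0$, $h$ is convex and $h(t) < t$ on $(0,1)$ (because $N_0 \neq N_I$), the origin is an attracting fixed point with multiplier $\lambda$. Every $t \in [0,1)$ has iterates $h^{\circ n}(t) \to 0$. The standard Koenigs argument then applies. Near $0$ we have $h(t) = \lambda t(1+O(t))$, so the ratios $h^{\circ (n+1)}(t)/\lambda^{n+1}$ and $h^{\circ n}(t)/\lambda^{n}$ differ by a factor $1+O(h^{\circ n}(t))$. These factors converge geometrically, so the product converges locally uniformly on $[0,1)$. The limit $K$ is analytic, satisfies $K(0)=0$ and $K'(0)=1$, and obeys Schr\"oder's equation $K\circ h = \lambda K$. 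Because each $h^{\circ n}$ is strictly increasing with positive derivative and $K' = \lim (h^{\circ n})'/\lambda^n$, $K$ is strictly increasing. Using $K\circ h=\lambda K$ and $h^{\circ n}(t)\uparrow 1$ backwards, one shows $K$ maps $[0,1)$ onto $[0,\infty)$. Concretely, iterating the inverse of $h$ shows $K(t)\to\infty$ as $t\to1$, since $K(h^{-1}(s)) = K(s)/\lambda$. Hence $K^{-1}:[0,\infty)\to[0,1)$ is analytic, with $K'>0$ needed near $0$ and elsewhere by the functional equation.

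\textbf{Step 2: every $g \in \mathcal{G}_{N_0}$ has the stated form.} By Proposition \ref{prop:gincrb}, $g$ is an increasing bijection fixing $0$, and by Proposition \ref{prop:init}, $\theta = g'(0) > 0$. Set $\psi = K\circ g \circ K^{-1}$ on $[0,\infty)$. Commutation gives $\psi(\lambda s) = \lambda \psi(s)$. Since $\psi$ is analytic at $0$ with $\psi(0)=0$ and $\psi'(0)=\theta$, iterating yields $\psi(s) = \psi(\lambda^n s)/\lambda^n \to \theta s$. Thus $g = K^{-1}(\theta K)$. Equivalently, $K\circ g/\theta$ is a normalized solution of Schr\"oder's equation, and uniqueness of that solution shows it equals $K$.

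\textbf{Step 3: surjectivity and identifiability.} Conversely, for any $\theta > 0$, the function $g_\theta = K^{-1}(\theta K)$ is well defined because $K$ maps onto $[0,\infty)$. It is analytic on $[0,1)$ and commutes with $h = K^{-1}(\lambda K)$, since multiplications commute. It is also non-constant, and $g_\theta'(0) = \theta$ by the chain rule with $K'(0)=1$. Distinct values of $\theta$ give distinct derivatives at $0$, so the parametrization is identifiable with parameter space $(0,\infty)$.

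\textbf{Main obstacle.} The delicate point is the global part of Step 1: analyticity of $K$ and $K^{-1}$ on all of $[0,1)$ (not only near $0$), together with the surjectivity $K([0,1)) = [0,\infty)$. This is what makes $\theta > 1$ admissible. I would handle it by extending local statements via $K = \lambda^{-n} K\circ h^{\circ n}$, which transports the local analyticity and invertibility at $0$ to any compact subset of $[0,1)$.
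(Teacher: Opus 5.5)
Your proposal is correct and follows essentially the same route as the paper. Both build the Koenigs function solving $K\circ h_{N_0}=\lambda K$, show that any commuting $g$ satisfies $K\circ g=g'(0)K$, and read off $g=K^{-1}(\theta K)$ with $\theta=g'(0)$; the difference is that you prove directly what the paper cites from Kuczma and Pranger/Cowen (analyticity of $K$, the surjectivity $K([0,1))=[0,\infty)$ that makes $\theta>1$ admissible, and the linearization $\psi(\lambda s)=\lambda\psi(s)\Rightarrow\psi(s)=\theta s$ for $\psi=K\circ g\circ K^{-1}$), which makes your version more self-contained.
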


\begin{proof}
Since $h_{N_0}$ is analytic on $[0,1)$ with $h_{N_0}(0)=0$
and $0 < h_{N_0}^{\prime}(0) = p_1(N_0) < 1$, classical results (see, e.g., Kuczma, 1968)
state that the Schr\"oder functional equation,
$K_{N_0}\circ h_{N_0} = h_{N_0}^{\prime}(0) K_{N_0},$
admits a unique analytic solution $K_{N_0}$ under the normalization $K_{N_0}^{\prime}(0)=1$.
This Koenigs function
is analytic on $[0,1)$, satisfies $K_{N_0}(0)=0$, and $\lim_{t\rightarrow 1^{-}}K_{N_0}(t)=\infty$,
and its Taylor coefficients are non-negative.

A central result in iteration theory (Pranger, 1970, Cowen, 1984), states that a non-constant analytic function $g$
commutes with $h_{N_0}$ if and only if it satisfies the same Schr\"oder equation, $K_{N_0}\circ g = g^{\prime}(0) K_{N_0}$.
Solving for $g(t)$ yields $g(t) = K_{N_0}^{-1}(g^{\prime}(0)  K_{N_0}(t))$.

Defining $\theta=g^{\prime}(0)$ and noting that the range of $g$ remains in the domain of $K_{N_0}$ for $\theta>0$, we obtain
the family $\mathcal{G}_{N_0}$ as stated. The uniqueness of $K_{N_0}$ ensures that each $\theta$
corresponds to exactly one $g \in \mathcal{G}_{N_0}$, completing the proof.
\end{proof}

Since $\mathcal{H}_{N_0} \subset \mathcal{G}_{N_0},$
we can parametrize the statistical model $\mathcal{N}_{N_0}$ through
$\theta = p_1(N)$, which means that $\mathcal{N}_{N_0}$ admits a global unidimensional parametrization.
\begin{corollary}
Let $N_0$ be positive integer-valued with $0<p_1(N_0)<1$. Then
the statistical model $\mathcal{N}_{N_0}$
is identifiably parametrized by $\theta = h_N^{\prime}(0)  = p_1(N)$,
with parameter space $\mathcal{D}_{N_0} \subset (0,1]$.
 \label{cor:parh}
\end{corollary}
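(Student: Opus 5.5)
The plan is to obtain the corollary as a restriction of Theorem \ref{thm:par} to the subset $\mathcal{H}_{N_0} \subset \mathcal{G}_{N_0}$.

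First, check that this inclusion holds. Take $N \in \mathcal{N}_{N_0}$. Then $h_N$ is analytic on $[0,1)$ because it is a power series with non-negative coefficients summing to one. It commutes with $h_{N_0}$ by Definition \ref{def:smn0}. It is non-constant, since $h_N(0)=0$ and $h_N(1)=1$. So $h_N \in \mathcal{G}_{N_0}$.

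Next, apply Theorem \ref{thm:par}. It gives $h_N = K_{N_0}^{-1}(\theta K_{N_0}(t))$ with $\theta = h_N^{\prime}(0)$. Differentiating the power series of $h_N$ at $0$ shows $h_N^{\prime}(0) = p_1(N)$. Identifiability on $\mathcal{N}_{N_0}$ then comes directly from the theorem. Two distinct variables $N, N' \in \mathcal{N}_{N_0}$ have distinct pgfs, hence distinct elements of $\mathcal{G}_{N_0}$. Since $g \mapsto g^{\prime}(0)$ is injective on $\mathcal{G}_{N_0}$, it follows that $p_1(N) \neq p_1(N')$. Put differently, the map $\theta \mapsto g_\theta$ is a bijection from $(0,\infty)$ onto $\mathcal{G}_{N_0}$, and $\mathcal{N}_{N_0}$ corresponds to the subset $\mathcal{D}_{N_0} = \{\theta : g_\theta \text{ is a pgf}\}$.

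Finally, locate $\mathcal{D}_{N_0}$ inside $(0,1]$.
\begin{itemize}
\item The bound $\theta \le 1$ holds because $p_1(N)$ is a probability.
\item The bound $\theta > 0$ needs an argument, because a pgf could a priori have $p_1(N)=0$. If $N = N_I$, then $\theta = 1$. If $N \neq N_I$, Proposition \ref{prop:znz} applies: since $p_1(N_0) > 0$, it forces $p_1(N) > 0$.
\end{itemize}
Together these give $\mathcal{D}_{N_0} \subset (0,1]$. As a consistency check, $N_I$ corresponds to $\theta = 1$, since $g_1(t) = t$, and $N_0$ corresponds to $\theta = p_1(N_0)$.

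The only step that is not immediate is excluding $\theta = 0$, and it is fully handled by Proposition \ref{prop:znz}. I expect no real obstacle. The corollary claims only inclusion in $(0,1]$, not an exact description of $\mathcal{D}_{N_0}$, so I will not try to characterize which $\theta$ yield pgfs.
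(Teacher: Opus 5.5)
Your proposal is correct and follows the paper's route: the paper obtains the corollary in one line by restricting Theorem~\ref{thm:par} to $\mathcal{H}_{N_0}\subset\mathcal{G}_{N_0}$, and your write-up makes explicit the inclusion, the identity $h_N'(0)=p_1(N)$, and the injectivity of $\theta\mapsto g_\theta$. Your appeal to Proposition~\ref{prop:znz} for $\theta>0$ is valid but not needed, because Theorem~\ref{thm:par} already gives $g'(0)\in(0,\infty)$ for every $g\in\mathcal{G}_{N_0}$; you implicitly used this yourself when you described $\theta\mapsto g_\theta$ as a bijection from $(0,\infty)$.
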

Theorem \ref{thm:par} implies that
function composition within $\mathcal{G}_{N_0}$ (and $\mathcal{H}_{N_0}$)
is isomorphic to the multiplication of their $\theta$ parameters, and that all pairs of functions
in these sets commute. Additionally, $\mathcal{G}_{N_0}$ is closed under inversion,
where the inverse of $g_\theta$ is $g_{1/\theta}$.
\begin{proposition}\label{prop:com}
Let $N_0$ be positive integer-valued with $0<p_1(N_0)<1$, and let
$g_{\theta_i} \in \mathcal{G}_{N_0}$ and $h_{N_{\theta_i}} \in \mathcal{H}_{N_0}$.
Then:
\begin{enumerate}
\item{}
Commutativity:
$g_{\theta_1}\circ g_{\theta_2} =
g_{\theta_2}\circ g_{\theta_1} =
g_{\theta_1 \theta_2} \in \mathcal{G}_{N_0}$, and
$ h_{N_{\theta_1}}\circ h_{N_{\theta_2}} =
h_{N_{\theta_1 \theta_2}} \in
\mathcal{H}_{N_0}$.
\item{}
Inversion:
$g_{\theta_1}^{-1} = g_{1/\theta_1} \in \mathcal{G}_{N_0}$, and
$h_{N_{\theta_1}}^{-1} = g_{1/\theta_1} \in \mathcal{G}_{N_0}$.
\item{}
Ratio composition: $g_{\theta_1}^{-1}\circ g_{\theta_2} =
g_{\theta_2/\theta_1} \in \mathcal{G}_{N_0}$, and
$h_{N_{\theta_1}}^{-1}\circ h_{N_{\theta_2}} =
g_{\theta_2/\theta_1} \in \mathcal{G}_{N_0}$.
\end{enumerate}
\end{proposition}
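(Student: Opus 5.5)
The plan is to work entirely through the Koenigs-function representation of Theorem \ref{thm:par}. Every $g \in \mathcal{G}_{N_0}$ has the form $g_\theta = K_{N_0}^{-1}(\theta K_{N_0}(\cdot))$ with $\theta = g'(0)$. Every $h_N \in \mathcal{H}_{N_0}$ lies in $\mathcal{G}_{N_0}$ and, by Corollary \ref{cor:parh}, is $g_\theta$ with $\theta = p_1(N)$. So each of the three items reduces to an identity about maps of the form $K^{-1}\circ(\text{multiplication by a constant})\circ K$, which conjugate to multiplication on the $K$-side. Throughout I write $K = K_{N_0}$. The only analytic facts I need are that $K$ is a strictly increasing bijection from $[0,1)$ onto $[0,\infty)$ (from $K(0)=0$, $K'(0)=1$, nonnegative Taylor coefficients and $K(t)\to\infty$ as $t\to1^-$) and that $K^{-1}$ is analytic there. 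These guarantee that each $g_\theta$ is well defined on $[0,1)$ for every $\theta>0$.

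\emph{Commutativity.} I would compute directly
$$g_{\theta_1}\circ g_{\theta_2}(t) = K^{-1}\bigl(\theta_1 K(K^{-1}(\theta_2 K(t)))\bigr) = K^{-1}(\theta_1\theta_2 K(t)) = g_{\theta_1\theta_2}(t).$$
Since multiplication is commutative, the same computation gives $g_{\theta_2}\circ g_{\theta_1} = g_{\theta_1\theta_2}$. Membership in $\mathcal{G}_{N_0}$ follows either from the displayed form with $\theta_1\theta_2\in(0,\infty)$ or from Proposition \ref{prop:gcfc}. For the pgf statement, $h_{N_{\theta_i}} = g_{\theta_i}$, so $h_{N_{\theta_1}}\circ h_{N_{\theta_2}} = g_{\theta_1\theta_2}$. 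This composite is also the pgf of $N_{\theta_1}\circ N_{\theta_2}$, which lies in $\mathcal{N}_{N_0}$ by the closure in Proposition \ref{prop:zn1}. Its parameter is its derivative at $0$, namely $\theta_1\theta_2$ by the chain rule. By identifiability (Corollary \ref{cor:parh}), it is therefore $h_{N_{\theta_1\theta_2}}$.

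\emph{Inversion and ratio composition.} Item 1 gives $g_{\theta}\circ g_{1/\theta} = g_{1/\theta}\circ g_\theta = g_1 = K^{-1}\circ K = \mathrm{id}$ on $[0,1)$, so $g_\theta^{-1} = g_{1/\theta}$, and $g_{1/\theta}\in\mathcal{G}_{N_0}$ because $1/\theta\in(0,\infty)$. Applying this to $h_{N_{\theta_1}} = g_{\theta_1}$ gives $h_{N_{\theta_1}}^{-1} = g_{1/\theta_1}$. This element is generally not a pgf, which is why the statement only claims membership in $\mathcal{G}_{N_0}$. Item 3 then follows from items 1 and 2: $g_{\theta_1}^{-1}\circ g_{\theta_2} = g_{1/\theta_1}\circ g_{\theta_2} = g_{\theta_2/\theta_1}$, and likewise for the $h$'s.

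The only step needing care is the domain issue behind the cancellation $K\circ K^{-1} = \mathrm{id}$. This cancellation requires $\theta_2 K(t)$ to lie in the range of $K$ for every $t\in[0,1)$, which holds precisely because $K$ maps onto all of $[0,\infty)$. The same fact makes the inverse $g_{1/\theta}$ defined on all of $[0,1)$, including when $1/\theta>1$. With that surjectivity in hand, the rest is formal algebra.
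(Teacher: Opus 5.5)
Your proof is correct, but it uses a different mechanism from the paper's.

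The paper never manipulates the formula $K_{N_0}^{-1}(\theta K_{N_0}(t))$ in this proof. For item 1 it invokes closure of $\mathcal{G}_{N_0}$ under composition (Proposition \ref{prop:gcfc}) and computes $(g_{\theta_1}\circ g_{\theta_2})'(0)=\theta_1\theta_2$ by the chain rule. It then concludes $g_{\theta_1}\circ g_{\theta_2}=g_{\theta_1\theta_2}$ purely from identifiability by $g'(0)$. For item 2 it uses Proposition \ref{prop:gincrb} to know that $g_{\theta_1}$ is a bijection. It then checks by a conjugation argument that $g_{\theta_1}^{-1}$ again commutes with $h_{N_0}$, and identifies it through $(g_{\theta_1}^{-1})'(0)=1/\theta_1$.

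You instead linearize everything via $K_{N_0}\circ g_\theta=\theta K_{N_0}$. Composition then becomes multiplication of constants, and the inverse is read off from $g_\theta\circ g_{1/\theta}=g_1=K_{N_0}^{-1}\circ K_{N_0}$.

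What your route buys:
\begin{itemize}
\item The isomorphism with $((0,\infty),\times)$ becomes transparent.
\item Inversion comes essentially for free. You need neither Proposition \ref{prop:gincrb} nor a separate check that the inverse is analytic at $0$ and commutes with $h_{N_0}$.
\end{itemize}
The cost is that you rely explicitly on $K_{N_0}$ mapping $[0,1)$ onto all of $[0,\infty)$. You correctly flag this as the one point needing care, especially for $\theta>1$.

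The paper's route is more structural. It only uses that $\mathcal{G}_{N_0}$ is closed under composition and inversion and is identified by the derivative at the origin. It would therefore work for any such family, but it must establish membership of the inverse by hand.

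Your treatment of the pgf half of item 1 is slightly more explicit than the paper's, which leaves that step implicit. You justify it via closure of $\mathcal{N}_{N_0}$ (Proposition \ref{prop:zn1}) and identifiability by $p_1$ (Corollary \ref{cor:parh}).
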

Finally, note that every commuting family of r.v.'s, $\mathcal{N}_{N_0}$, can be identified by its Koenigs function.
Furthermore, given that every element in $\mathcal{G}_{N_0}$ commutes with every other element in it,
the choice of the reference variable $N_0$ is arbitrary within the same family (excluding the identity $N_I$)
\begin{proposition}
The sets $\mathcal{G}_{N_0}$, $\mathcal{N}_{N_0}$, and the Koenigs function
$K_{N_0}$, are invariant to the choice of base r.v.;
If $N \in \mathcal{N}_{N_0}$ s.t. $N \neq N_I$, then
$\mathcal{N}_{N} = \mathcal{N}_{N_0}$, $\mathcal{G}_{N} = \mathcal{G}_{N_0}$, and $K_N = K_{N_0}$.
\label{prop:erpl1}
\end{proposition}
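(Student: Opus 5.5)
The plan is to derive all three equalities from Theorem \ref{thm:par} and Proposition \ref{prop:com}, proving $K_N = K_{N_0}$ first and then reading the other two off the explicit form of $\mathcal{G}$. Fix $N \in \mathcal{N}_{N_0}$ with $N \neq N_I$.

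\textbf{Step 1: $N$ satisfies the hypothesis of Theorem \ref{thm:par}.}
\begin{itemize}
\item By Corollary \ref{cor:parh}, $N$ corresponds to some $\theta = p_1(N) \in (0,1]$.
\item If $\theta = 1$, then $p_1(N)=1$, so $h_N(t)=t$ and $N=N_I$, which is excluded.
\item Proposition \ref{prop:znz} rules out $\theta = 0$.
\end{itemize}
Hence $0<p_1(N)<1$, and $K_N$ is well defined as the unique analytic solution of $K_N\circ h_N = p_1(N)\,K_N$ normalized by $K_N'(0)=1$.

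\textbf{Step 2: $K_N = K_{N_0}$.}
\begin{itemize}
\item By Theorem \ref{thm:par}, $h_N = K_{N_0}^{-1}(\theta K_{N_0})$, so $K_{N_0}\circ h_N = \theta K_{N_0}$.
\item Thus $K_{N_0}$ is an analytic solution of the Schr\"oder equation for $h_N$, with $K_{N_0}'(0)=1$ and $K_{N_0}(0)=0$.
\item The uniqueness in the classical Koenigs result quoted in the proof of Theorem \ref{thm:par} then gives $K_N = K_{N_0}$.
\end{itemize}
As a cross-check, the limit formula gives the same conclusion directly. Proposition \ref{prop:com} gives $h_N^{\circ n} = g_{\theta^n}$, so
$$h_N^{\circ n}(t)/\theta^n = K_{N_0}^{-1}(\theta^n K_{N_0}(t))/\theta^n.$$
Since $(K_{N_0}^{-1})'(0)=1$, this ratio tends to $K_{N_0}(t)$.

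\textbf{Step 3: $\mathcal{G}_N = \mathcal{G}_{N_0}$.}
\begin{itemize}
\item Applying Theorem \ref{thm:par} to both $N$ and $N_0$ gives $\mathcal{G}_N = \{K_N^{-1}(s K_N): s>0\}$ and the analogous description for $\mathcal{G}_{N_0}$.
\item With $K_N=K_{N_0}$ these two sets coincide.
\end{itemize}

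\textbf{Step 4: $\mathcal{N}_N = \mathcal{N}_{N_0}$.}
\begin{itemize}
\item A positive integer-valued r.v. $M$ lies in $\mathcal{N}_N$ iff $h_M$ commutes with $h_N$.
\item If $M=N_I$, it lies in both sets.
\item Otherwise $h_M$ is non-constant and analytic on $[0,1)$, so $M \in \mathcal{N}_N$ iff $h_M \in \mathcal{G}_N$ by definition.
\item Likewise $M \in \mathcal{N}_{N_0}$ iff $h_M \in \mathcal{G}_{N_0}$.
\end{itemize}
Step 3 then gives equality of the two sets. Equivalently, $\mathcal{N}_{N}$ is the set of pgfs in $\mathcal{G}_{N}$, and $\mathcal{N}_{N_0}$ is the set of pgfs in $\mathcal{G}_{N_0}$.

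The only delicate point is Step 1, namely that $N$ itself meets the requirement $0<p_1(N)<1$ of Theorem \ref{thm:par}. The upper bound uses $N\ne N_I$, and the lower bound uses Proposition \ref{prop:znz} together with $p_1(N_0)>0$. Everything after that is bookkeeping with the uniqueness of the Koenigs function.
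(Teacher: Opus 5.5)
Your proof is correct; it reaches the result by a somewhat different route from the paper's.

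The paper gives no separate proof of this proposition; Appendix 1 contains none. Its only justification is the sentence just before the statement: every element of $\mathcal{G}_{N_0}$ commutes with every other one (Proposition \ref{prop:com}), so any non-identity member can serve as the reference variable. Turned into a proof, that route is set-theoretic:
\begin{itemize}
\item Since $h_N\in\mathcal{G}_{N_0}$ and $\mathcal{G}_{N_0}$ is abelian, every $g\in\mathcal{G}_{N_0}$ commutes with $h_N$, which gives $\mathcal{G}_{N_0}\subseteq\mathcal{G}_N$.
\item The reverse inclusion comes from swapping the roles of $N$ and $N_0$. This is only legitimate once $0<p_1(N)<1$ is known, so that Proposition \ref{prop:com} holds with base $N$.
\item $K_N=K_{N_0}$ then has to be obtained separately, for instance from the limit formula.
\end{itemize}

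You reverse the order. You first identify the Koenigs functions through uniqueness of the normalized Schr\"oder solution, and your cross-check via $h_N^{\circ n}=g_{\theta^n}$ independently confirms this directly from the paper's limit definition of $K$. You then read both set equalities off the explicit representation in Theorem \ref{thm:par}.

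What each approach buys:
\begin{itemize}
\item Your ordering settles all three claims with one tool, and it makes explicit the check $0<p_1(N)<1$ that the paper's one-line justification leaves implicit.
\item The commutativity route gets the set equalities without directly invoking Schr\"oder uniqueness.
\item Proposition \ref{prop:com} is itself derived from the uniqueness of the $\theta$-parametrization in Theorem \ref{thm:par}, so both arguments ultimately rest on the same foundation.
\end{itemize}
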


\subsection{Series expansion of the functions in $\mathcal{G}_{N_0}$ and $\mathcal{H}_{N_0}$}

The following theorem establishes a fundamental property of the commuting families, that is,
that the coefficients in the power series expansion of any $g_\theta \in \mathcal{G}_{N_0}$
are polynomials in the parameter $\theta$, with their degree bounded by the index of the term.
\begin{theorem}
Let $N_0$ be positive integer-valued with $0 < p_1(N_0) <1$. For any
$g_{\theta} \in \mathcal{G}_{N_0}$, the series expansion around the origin is:
$$g_{\theta}(t) = \theta t + \sum_{i=2}^{\infty}a_i(\theta)t^i, $$
where each coefficient $a_i(\theta)$ is a polynomial in $\theta=g_{\theta}^{\prime}(0)$  of degree at most $i$.
Specifically, for $i \ge 2$,
$a_i(\theta) = \theta(1-\theta)Q_i(\theta)$, where $Q_i(\theta)$ is a polynomial of degree at most $i-2$.
\label{thm:exp}
\end{theorem}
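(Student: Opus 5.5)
We will work from the representation in Theorem \ref{thm:par}, $g_\theta = K_{N_0}^{-1}(\theta K_{N_0}(t))$, and use the Schr\"oder equation $K \circ g_\theta = \theta K$ to get a triangular recursion for the Taylor coefficients. Write $K(t) = t + \sum_{j\ge 2} k_j t^j$ with $K=K_{N_0}$, which is normalized by $K'(0)=1$, and $g_\theta(t) = \theta t + \sum_{i\ge 2} a_i(\theta) t^i$. Substituting into $K(g_\theta(t)) = \theta K(t)$ and comparing coefficients of $t^i$ gives
$$ a_i(\theta) + \sum_{j=2}^{i} k_j \,[t^i]\big(g_\theta(t)^j\big) = \theta k_i, $$
where $[t^i](g_\theta^j)$ involves only $a_1=\theta, a_2,\dots,a_{i-j+1}$ for $j\ge2$. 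Thus $a_i$ is determined recursively by $a_1,\dots,a_{i-1}$:
$$ a_i(\theta) = \theta k_i - \sum_{j=2}^{i} k_j \sum_{\substack{m_1+\cdots+m_j=i\\ m_r\ge1}} a_{m_1}\cdots a_{m_j}. $$

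\textbf{Step 1 (polynomiality and degree).} We prove by strong induction that $a_m$ is a polynomial of degree at most $m$, with $a_1=\theta$. Each product $a_{m_1}\cdots a_{m_j}$ with $\sum m_r = i$ then has degree at most $i$. The term $\theta k_i$ has degree $1\le i$. Hence $\deg a_i\le i$. Note that the $k_j$ are constants independent of $\theta$, which is exactly why this works; it uses the fact (Proposition \ref{prop:erpl1}) that $K$ is fixed across the family.

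\textbf{Step 2 (the factor $\theta$).} Every product appearing in the recursion contains at least one factor $a_{m_r}$. By induction each $a_m$ vanishes at $\theta=0$: $a_1=\theta$, and if $a_1,\dots,a_{i-1}$ all vanish at $0$, so does $a_i$. Alternatively, $g_0 = K^{-1}(0)\equiv 0$ formally.

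\textbf{Step 3 (the factor $1-\theta$).} At $\theta=1$ we have $g_1 = K^{-1}\circ K = \mathrm{id}$, so $a_i(1)=0$ for $i\ge2$. To keep the argument algebraic, we can instead verify inductively that $\theta=1$, $a_1=1$, $a_m=0$ for $2\le m<i$ forces the recursion to give $a_i(1) = k_i - k_i = 0$: only $j=1$ and $j=i$ contribute, the latter with product $1$.

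Since $\theta$ and $1-\theta$ are coprime factors of $a_i$, we may write $a_i=\theta(1-\theta)Q_i$ with $\deg Q_i\le i-2$.

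The only delicate point is justifying coefficient comparison: both sides are analytic near $0$ (composition of convergent power series with $g_\theta(0)=0$), so equating Taylor coefficients is legitimate. The identity $a_i(\theta)$ (Taylor coefficient of the actual function $g_\theta$) equals the polynomial produced by the recursion follows because the recursion uniquely determines the coefficients given $a_1=\theta$. I expect the main care to be in Step 1's bookkeeping of which $a_m$ appear (indices $<i$ when $j\ge2$). The rest is routine.
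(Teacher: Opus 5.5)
Your proof is correct. It reaches the result by a somewhat different mechanism than the paper.

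\textbf{The paper's route.} The paper expands the conjugacy directly. It writes $K_{N_0}^{-1}(s)=\sum_{n\ge1} b_n s^n$ and substitutes to get $g_\theta(t)=\sum_{n\ge1} b_n\theta^n K_{N_0}(t)^n$. Extracting the $t^i$ coefficient gives the closed form $a_i(\theta)=\sum_{n=1}^{i} b_n\,[K_{N_0}^n]_i\,\theta^n$, where the sum stops at $n=i$ because $K_{N_0}^n=O(t^n)$. The degree bound and the factor $\theta$ (no constant term) are then read off at once. The factor $1-\theta$ comes from $g_1=\mathrm{id}$, exactly as in your Step 3.

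\textbf{Your route.} You never form $K_{N_0}^{-1}$. Instead you use the Schr\"oder equation $K\circ g_\theta=\theta K$ to get a triangular recursion involving only the coefficients $k_j$ of $K$. You then obtain the degree bound, $a_i(0)=0$ and $a_i(1)=0$ by induction.

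\textbf{What each buys.} The paper's version gives an explicit formula for the coefficient of each power $\theta^n$ in $a_i$, in terms of $K^{-1}$ and powers of $K$. Yours avoids the coefficients of $K_{N_0}^{-1}$ and gives a practical algorithm for computing the $a_i$. It is also fully self-contained at the endpoints: your inductive Step 2 and the check $a_i(1)=k_i-k_i=0$ never evaluate $g_\theta$ at the excluded value $\theta=0$. The paper's appeal to $g_0$ is really just the fact that its sum starts at $n=1$.

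\textbf{Minor point.} The independence of the $k_j$ from $\theta$ needs no appeal to Proposition \ref{prop:erpl1}. It holds simply because Theorem \ref{thm:par} describes all of $\mathcal{G}_{N_0}$ through the single fixed function $K_{N_0}$.
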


\begin{proof}
Let the Taylor expansion of the Koenigs function and its inverse be
$K_{N_0}(t) = t + \sum_{i=2}^{\infty} c_i t^i$, and $K_{N_0}^{-1}(t)=\sum_{i=1}^{\infty} b_i t^i$.
By Definition \ref{def:smn1},
$g_{\theta}(t)=K^{-1}_{N_0}(\theta K_{N_0}(t))$.

Taking the $n$-th derivative of $g_\theta$ with respect to $\theta$ using the chain rule yields:
$$ \frac{\partial^{n}g_{\theta}(t)}{\partial\theta^n}=\frac{\partial^n K^{-1}_{N_0}}{\partial t^n}(\theta K_{N_0}(t)) [K_{N_0}(t)]^n. $$
Evaluating this at $\theta=0$, we obtain:
$$ \left. \frac{\partial^{n}g_{\theta}(t)}{\partial\theta^n} \right|_{\theta=0} = \frac{\partial^nK^{-1}_{N_0}}{\partial t^n}(0) [K_{N_0}(t)]^n = n!b_{n}[K_{N_0}(t)]^n. $$
Summing the Taylor series of $g_\theta(t)$ in $\theta$ gives:
$$ g_{\theta}(t) =
\sum_{n=1}^{\infty} b_n [K_{N_0}(t)]^n \theta^n. $$
Now, consider the coefficient of $t^i$ in the expansion of $g_\theta(t)$, denoted $a_i(\theta)$.
Let $[K_{N_0}(t)^n]_i$ represent the $i$-th Taylor coefficient of the $n$-th power of the Koenigs function.
Then,
$$ a_i(\theta) = \sum_{n=1}^\infty b_n[K_{N_0}(t)^n]_i \theta^n. $$
Since $K_{N_0}(t) = t + \sum_{i=2}^{\infty} c_i t^i$, the lowest-order term in $[K_{N_0}(t)]^n$
is $t^n$. Therefore, $[K_{N_0}(t)^n]_i = 0$ for all $n>i$. This truncates the infinite sum into a polynomial
of degree $i$:
$$ a_i(\theta) = \sum_{n=1}^i b_n[K_{N_0}(t)^n]_{i} \theta^n. $$
Finally, observe that $g_1(t) = K^{-1}_{N_0}(K_{N_0}(t)) = t$, which implies $a_i(1) = 0$ for all $i \ge 2$.
Furthermore, $g_0(t) = K^{-1}_{N_0}(0) = 0$ implies $a_i(0) = 0$.
Consequently, $a_i(\theta)$ must contain the factor $\theta(1-\theta)$, completing the proof.
\end{proof}

Since $\mathcal{H}_{N_0} \subset \mathcal{G}_{N_0}$, this polynomial structure describes the probabilities of the stopping model.
\begin{corollary}
Let $N_0$ be positive integer-valued with $0<p_1(N_0)<1$. Then, for any
$N_{\theta} \in \mathcal{N}_{N_0}$, the probabilities $p_i(\theta) = \Pr(N_\theta=i)$
are polynomials in $\theta=p_1(\theta)$ of degree at most $i$. Specifically, for each $i \ge 2$ they take the form
$p_i(\theta) = \theta(1-\theta)Q_i(\theta)$.
\label{col:pol}
\end{corollary}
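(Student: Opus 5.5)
The corollary should follow directly from Theorem \ref{thm:exp}, specialized to pgfs. The plan is to use the inclusion $\mathcal{H}_{N_0}\subset\mathcal{G}_{N_0}$. Every $N\in\mathcal{N}_{N_0}$ has a pgf $h_N$ that is analytic on $[0,1)$ and commutes with $h_{N_0}$. It is also non-constant, since $h_N(0)=0$ and $h_N(1)=1$. Hence $h_N\in\mathcal{G}_{N_0}$, and by Theorem \ref{thm:par} it equals $g_\theta$ for the unique $\theta=h_N'(0)$.

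The first step is to identify that parameter with a probability. From $h_N(t)=\sum_{i\ge1}p_i(N)t^i$ we get $h_N'(0)=p_1(N)$, so $\theta=p_1(N)$. This agrees with the identifiable parametrization of Corollary \ref{cor:parh}, which gives $\theta\in\mathcal{D}_{N_0}\subset(0,1]$.

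The second step is to compare the two power series of $h_{N_\theta}=g_\theta$ around $0$. By uniqueness of Taylor coefficients of an analytic function at the origin, $p_i(\theta)=a_i(\theta)$ for every $i\ge1$. Here $a_1(\theta)=\theta$, and for $i\ge2$ Theorem \ref{thm:exp} gives $a_i(\theta)=\theta(1-\theta)Q_i(\theta)$ with $\deg Q_i\le i-2$. So each $p_i$ is a polynomial of degree at most $i$ and has the stated factored form.

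The only point needing care is that the polynomial identity is asserted on the restricted domain $\mathcal{D}_{N_0}$ rather than on all of $(0,\infty)$. This is harmless, because the identity $p_i(\theta)=a_i(\theta)$ holds pointwise for each $\theta\in\mathcal{D}_{N_0}$, and $a_i$ is one fixed polynomial independent of $\theta$, determined by the Koenigs coefficients $b_n$ and $[K_{N_0}^n]_i$. I expect no real obstacle beyond checking that $h_N$ is analytic at $0$ and non-constant, so that it lies in $\mathcal{G}_{N_0}$.
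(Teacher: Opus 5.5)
Your proposal is correct and follows essentially the same route as the paper, which obtains the corollary in one line from Theorem~\ref{thm:exp} via the inclusion $\mathcal{H}_{N_0}\subset\mathcal{G}_{N_0}$ and the identification $\theta=h_N'(0)=p_1(N)$. Your additional checks make explicit what the paper leaves implicit: that $h_N$ belongs to $\mathcal{G}_{N_0}$, that the Taylor coefficients match, and that restricting to $\mathcal{D}_{N_0}$ is harmless.
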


\subsection{Instances where $\mathcal{N}_{N_0}$ has a non-empty interior}

By Proposition \ref{prop:init2} and Theorem \ref{thm:par},
a necessary and sufficient condition for the set $\mathcal{G}_{N_0}$ to have a non-empty interior
is $0<p_1(N_0)<1$. Under this condition, the parameter
$\theta=g^{\prime}(0)$ spans the entire interval $(0,\infty)$.
While this condition is also necessary for $\mathcal{N}_{N_0}$ to have a non-empty interior (Proposition \ref{prop:zn2}),
it is not sufficient.
The following result establishes that if $\mathcal{N}_{N_0}$ has
a non-empty interior, its parameter space contains a maximal interval $(0,\theta_{N_0}]$.
\begin{theorem}
Let $N_0$ be positive integer-valued with $0<p_1(N_0)<1$.
If the interior of the parameter space $\mathcal{D}_{N_0}$ of $\mathcal{N}_{N_0}$
is non-empty, then there exists a threshold $\theta_{K_{N_0}} \in (0,1]$ such that
i) $(0,\theta_{K_{N_0}}] \subset \mathcal{D}_{N_0}$, and
ii) $(0,\theta_{K_{N_0}}+\epsilon] \not\subset \mathcal{D}_{N_0}$ for any $\epsilon>0$.
 \label{thm:intp}
\end{theorem}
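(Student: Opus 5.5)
Via Corollary \ref{cor:parh} and Theorem \ref{thm:par}, I identify $\mathcal{D}_{N_0}$ with the set of $\theta\in(0,1]$ for which $g_\theta = K_{N_0}^{-1}(\theta K_{N_0})$ is a pgf. Since $g_\theta(0)=0$, $g_\theta(1)=1$ (Proposition \ref{prop:gincrb}), and $g_\theta$ is analytic on $[0,1)$, the remark in Section 2 says $g_\theta$ is a pgf exactly when all its Taylor coefficients are non-negative. By Theorem \ref{thm:exp} these coefficients are $\theta$ and $a_i(\theta)$, $i\ge 2$. Hence
$$\mathcal{D}_{N_0}=\{\theta\in(0,\infty): a_i(\theta)\ge 0 \text{ for all } i\ge 2\},$$
which is closed in $(0,\infty)$, being an intersection of closed sets.

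The core step is showing that $\mathcal{D}_{N_0}$ is \emph{downward closed}: if $\theta\in\mathcal{D}_{N_0}$ then $(0,\theta]\subset\mathcal{D}_{N_0}$. I would use the semigroup law of Proposition \ref{prop:com}, $h_{N_{\theta_1}}\circ h_{N_{\theta_2}} = h_{N_{\theta_1\theta_2}}$. Composition of pgfs is a pgf, so $\mathcal{D}_{N_0}$ is a multiplicative semigroup. Since $1\in\mathcal{D}_{N_0}$ (identity) and the interior is nonempty, there is an interval $[\alpha,\beta]\subset\mathcal{D}_{N_0}$ with $\alpha<\beta\le1$. Products then give intervals $[\alpha^n,\beta^n]$. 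Since $\beta^n/\alpha^n$ grows, these intervals eventually overlap consecutively, and their union contains $(0,\beta^{n_0}]$ for some $n_0$. So $\mathcal{D}_{N_0}$ contains some interval $(0,\eta]$.

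Now let $\theta_{K_{N_0}} = \sup\{\eta : (0,\eta]\subset\mathcal{D}_{N_0}\}$. It is positive by the previous step and at most $1$: if $\theta>1$ then $g_\theta^{-1}=g_{1/\theta}$ would have $g'(0)<1$, while a pgf with $h'(0)>1$ is impossible since $p_1\le1$. By closedness of $\mathcal{D}_{N_0}$ in $(0,\infty)$, the supremum is attained, giving i). Property ii) is immediate from maximality of the supremum.

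I expect the main obstacle to be rigorously justifying the overlap/covering argument near $0$, together with the closedness claim. Closedness requires that each $a_i$ be continuous, which holds because it is a polynomial. Note that ii) only asserts non-containment of the whole interval; it does not require $\mathcal{D}_{N_0}$ itself to equal $(0,\theta_{K_{N_0}}]$, so no further structure is needed. For the overlap, the condition is $\beta^{n+1}\ge\alpha^{n}$, i.e. $n\ge \log\beta/\log(\alpha/\beta)$, which holds for all large $n$ because $\alpha<\beta$.
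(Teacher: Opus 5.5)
Your argument is correct and follows the paper's proof. The paper likewise takes an open interval $(\theta_1,\theta_2)\subset\mathcal{D}_{N_0}$ and uses $N_\theta^{\circ k}=N_{\theta^k}$ to get $(\theta_1^k,\theta_2^k)\subset\mathcal{D}_{N_0}$. It then observes that consecutive intervals overlap for large $k$, so $(0,\theta_2^{k_0})\subset\mathcal{D}_{N_0}$, and defines $\theta_{K_{N_0}}$ as the supremum.

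Your one real addition is a useful one. The paper says (i) ``follows from the definition as a supremum'', but the supremum alone only gives $(0,\theta_{K_{N_0}})\subset\mathcal{D}_{N_0}$. Your observation supplies the endpoint: $\mathcal{D}_{N_0}=\{\theta>0 : a_i(\theta)\ge 0 \text{ for all } i\ge 2\}$ is closed in $(0,\infty)$ because the $a_i$ are polynomials (Theorem~\ref{thm:exp}).

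Two things to clean up:
\begin{itemize}
\item Delete the ``downward closed'' claim you announce as the core step, because it is false. Since $1\in\mathcal{D}_{N_0}$ always, it would force $\mathcal{D}_{N_0}=(0,1]$ for every $N_0$. That contradicts Proposition~\ref{prop:finit} and the examples of Section 6.2. Your actual argument never uses it, so nothing else breaks.
\item The bound $\theta_{K_{N_0}}\le 1$ follows directly from $\mathcal{D}_{N_0}\subset(0,1]$ (Corollary~\ref{cor:parh}, since $p_1\le 1$). The detour through $g_{1/\theta}$ is unnecessary.
\end{itemize}
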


\begin{proof}
Assume $\mathcal{D}_{N_0}$ has a non-empty interior. Then
there exist an open interval $(\theta_1,\theta_2) \subset \mathcal{D}_{N_0}$.
Since $\mathcal{N}_{N_0}$ is closed under pgf composition,
for any $k \in \mathbb{N}^{+}$, if $\theta \in (\theta_1,\theta_2)$ then
the $k$-fold composition $N_{\theta}^{\circ k} = N_{\theta^k}$ must also lie in $\mathcal{N}_{N_0}$,
implying that $(\theta_1^k, \theta_2^k) \subset \mathcal{D}_{N_0}$ for every $k$.

As $k$ increases, these intervals approach the origin.
Intervals $(\theta_1^k, \theta_2^k)$ and $(\theta_1^{k+1}, \theta_2^{k+1})$ eventually overlap,
when the right endpoint of the $(k+1)$-th interval is greater than the left endpoint of the
$k$-th one, $\theta_2^{k+1} > \theta_1^k$, which happens for all $k > -\log(\theta_1)/\log(\theta_2/\theta_1)$.
Let $k_0$ be the smallest such integer.
The union of these overlapping intervals $\cup_{k=k_0}^{\infty} (\theta_1^k, \theta_2^k)$ forms a single connected
interval $(0,\theta_2^{k_0}) \subset \mathcal{D}_{N_0}$.
That establishes that $\mathcal{D}_{N_0}$ contains an
interval anchored at the origin.

We then define the threshold $\theta_{K_{N_0}}=\sup\{\theta  \mid (0,\theta] \subset \mathcal{D}_{N_0} \}$.
By construction, $\theta_{K_{N_0}} \ge \theta_2^{k_0} > 0$, and properties (i) and (ii) follow from the
definition of $\theta_{K_{N_0}}$ as a supremum.
\end{proof}

\begin{remark}
By Proposition \ref{prop:com}, $h_{N_{\theta_1}}^{-1}\circ h_{N_{\theta_2}} = g_{\theta_2/\theta_1} \in \mathcal{G}_{N_0}$.
If $\theta_1,\theta_2$ take values in $(0,\theta_{K_{N_0}})$, the ratio
$\theta_2/\theta_1$ covers the entire range $(0,\infty)$.
Thus, whenever $\mathcal{N}_{N_0}$ has a non-empty interior,
$$ \mathcal{G}_{N_0} =
\{ h_{N_{\theta_1}}^{-1} \circ h_{N_{\theta_2}} \mid h_{\theta_1}, h_{\theta_2} \in \mathcal{H}_{N_0}  \}, $$
and thus $\mathcal{G}_{N_0}$ can be fully reconstructed as the set of ratios of pgfs from $\mathcal{H}_{N_0}$.
\end{remark}

A consequence of Theorem \ref{thm:intp}, proved in Appendix 1,
is that if $N_0$ has finite support, then $\mathcal{N}_{N_0}$ has empty interior.
Hence, $0<p_1(N_0)<1$ is not sufficient to guarantee a non-empty interior.
\begin{proposition}
Let $N_0$ be positive integer-valued with pgf $h_{N_0} = \sum_{i=1}^k p_i t^i$ where $1<k<\infty$
and $p_1, p_k \in (0,1)$. Then, the interior of $\mathcal{N}_{N_0}$ is empty.
\label{prop:finit}
\end{proposition}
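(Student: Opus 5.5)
The plan is to argue by contradiction. Suppose the interior of $\mathcal{N}_{N_0}$ is non-empty. By Theorem \ref{thm:intp} there is a threshold $\theta_{K_{N_0}}>0$ with $(0,\theta_{K_{N_0}}]\subset\mathcal{D}_{N_0}$. So for every $\theta\in(0,\theta_{K_{N_0}}]$ the function $g_\theta=K_{N_0}^{-1}(\theta K_{N_0})$ of Theorem \ref{thm:par} is a genuine pgf $h_{N_\theta}$, with non-negative coefficients. I will produce one such small $\theta$ for which $h_{N_\theta}$ cannot exist, using a degree-counting argument. Write $p_1=p_1(N_0)\in(0,1)$. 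By Proposition \ref{prop:com} and Corollary \ref{cor:parh}, $h_{N_0}=g_{p_1}$ and $h_{N_0}^{\circ m}=g_{p_1^m}$, and $h_{N_0}^{\circ m}$ is a polynomial of exact degree $k^m$ because $p_k>0$.

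\emph{Step 1 (a pgf whose iterate is a polynomial is a polynomial).} Let $h=\sum_j a_j t^j$ be a pgf with $a_1=\theta>0$ and $a_j\ge 0$. The key estimate, which needs $\theta>0$, is that the coefficient of $t^j$ in $h^{\circ r}$ is at least $a_j\theta^{j(r-1)}$. This holds because all coefficients are non-negative and $h(t)\ge \theta t$ coefficientwise, so $h^{\circ(r-1)}(t)\ge \theta^{r-1}t$ coefficientwise. Hence $h(h^{\circ(r-1)}(t))\ge a_j(\theta^{r-1}t)^j$, with no cancellation possible. It follows that if $h^{\circ r}$ is a polynomial, then $h$ is a polynomial, say of degree $d$. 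In that case $h^{\circ r}$ has degree exactly $d^r$, since the leading coefficients are positive.

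\emph{Step 2 (choice of parameters).} Fix a prime $r$ with $2^r>k$. Then $k$ is not a perfect $r$-th power, because $1<k<2^r$. Choose $m\ge 1$ with $\gcd(m,r)=1$ and $m$ large enough that $\theta:=p_1^{m/r}\le\theta_{K_{N_0}}$; for instance take $m\equiv 1 \pmod r$ and large. Then $N_\theta\in\mathcal{N}_{N_0}$, and by Proposition \ref{prop:com},
$$h_{N_\theta}^{\circ r}=g_{\theta^r}=g_{p_1^m}=h_{N_0}^{\circ m},$$
which is a polynomial of degree $k^m$. By Step 1, $h_{N_\theta}$ is a polynomial of some degree $d$ with $d^r=k^m$.

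\emph{Step 3 (arithmetic contradiction).} Since $\gcd(m,r)=1$, choose integers $u,v$ with $um+vr=1$. Then
$$k=k^{um}k^{vr}=(d^{u}k^{v})^{r}$$
is the $r$-th power of a positive rational number. A positive integer that is the $r$-th power of a rational is the $r$-th power of an integer, so $k$ is a perfect $r$-th power. This contradicts the choice of $r$, so the interior of $\mathcal{N}_{N_0}$ must be empty.

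The main obstacle is Step 1: one must ensure that composition of power series with non-negative coefficients cannot cancel a high-order term, and the lower bound through $\theta>0$ is exactly what guarantees this. The rest is bookkeeping through the multiplicative structure $g_{\theta_1}\circ g_{\theta_2}=g_{\theta_1\theta_2}$ of Proposition \ref{prop:com}.
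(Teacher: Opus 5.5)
Your proof is correct, and it takes a genuinely different route from the paper's. Both arguments begin with Theorem \ref{thm:intp}.

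The paper then argues locally and differentially. It takes the polynomial element $N_1=N_0^{\circ n}$, of degree $r$, at $\theta_1=p_1^n$ inside the interval. Nonnegativity of the coefficients of $g_\theta$ on both sides of this interior point forces every coefficient of index $j>r$ to have zero $\theta$-derivative at $\theta_1$. Hence $\partial_\theta g_\theta|_{\theta_1}$ is a polynomial of degree at most $r$. The paper then differentiates the commutation relation $g_\theta\circ h_{N_1}=h_{N_1}\circ g_\theta$ in $\theta$. Comparing leading degrees forces that derivative to have degree exactly $r$, and comparing leading coefficients gives $(r-1)a_r^r d_r=0$, a contradiction.

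You replace this infinitesimal perturbation with the semigroup law $g_{\theta_1}\circ g_{\theta_2}=g_{\theta_1\theta_2}$. You realize $h_{N_0}^{\circ m}$ as the $r$-fold iterate of the pgf $h_{N_\theta}$ with $\theta=p_1^{m/r}$. Positivity, using $\theta>0$, rules out cancellation, so this compositional root must itself be a polynomial, and unique factorization finishes the argument.

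\textbf{What your route buys.} It needs no differentiability in $\theta$ and no two-sided neighborhood, only that the isolated points $p_1^{m/r}$ lie in $\mathcal{D}_{N_0}$. It therefore proves more: for example, if $k$ is not a perfect power, no non-integer rational power of $p_1$ can belong to $\mathcal{D}_{N_0}$. It also avoids the small care the paper's argument needs to choose $\theta_1$ strictly inside $(0,\theta_{N_0}]$ rather than at its endpoint.

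\textbf{What the paper's route buys.} It gives a local statement that does not depend on the arithmetic of $k$: a polynomial pgf of degree greater than one can never be an interior point of the parameter space of a commuting family.
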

Since the identity $N_I = N_{\theta=1}$ is always in $\mathcal{N}_{N_0}$, any threshold
$\theta_{K_{N_0}} < 1$ would imply that the identity $\theta=1$ is
an isolated point, rendering the model disconnected. Hence,
$\mathcal{N}_{N_0}$ being connected requires $\theta_{K_{N_0}}=1$.
\begin{corollary}
The stopping model $\mathcal{N}_{N_0}$ is connected and has a non-empty interior
if and only if its parameter space for $\theta$ is $\mathcal{D}_{N_0} = (0,1]$.
\label{cor:pfc}
\end{corollary}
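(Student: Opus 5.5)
The corollary is an equivalence, and I would prove the two directions separately, working throughout in the global parametrization $\theta = p_1(N)$ from Corollary \ref{cor:parh}. Under that parametrization $\mathcal{D}_{N_0} \subset (0,1]$ and $N_I$ corresponds to $\theta = 1$. Since $\theta$ is identifiable and the map $\theta \mapsto h_{N_\theta}$ is continuous, topological statements about $\mathcal{N}_{N_0}$ can be read off from $\mathcal{D}_{N_0}$ viewed as a subset of $(0,1]$. Concretely, Theorem \ref{thm:exp} says each $p_i(\theta)$ is a polynomial in $\theta$, so the parametrization is continuous coordinatewise, and identifiability makes it injective.

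\emph{Sufficiency.} If $\mathcal{D}_{N_0} = (0,1]$, then the model is the continuous injective image of an interval, hence connected. Its interior relative to the one-dimensional embedding contains the image of $(0,1)$, which is non-empty.

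\emph{Necessity.} Assume $\mathcal{N}_{N_0}$ is connected with non-empty interior. The interior of $\mathcal{D}_{N_0}$ is then non-empty, so Theorem \ref{thm:intp} supplies the threshold $\theta_{K_{N_0}} \in (0,1]$ with $(0,\theta_{K_{N_0}}] \subset \mathcal{D}_{N_0}$, maximal among intervals anchored at the origin. By Proposition \ref{prop:zn1}, $N_I \in \mathcal{N}_{N_0}$, so $1 \in \mathcal{D}_{N_0}$. It remains to show $\theta_{K_{N_0}} = 1$. Suppose instead $\theta_{K_{N_0}} < 1$. By maximality, some $\theta' \in (\theta_{K_{N_0}}, 1]$ lies outside $\mathcal{D}_{N_0}$; since $1 \in \mathcal{D}_{N_0}$, in fact $\theta' < 1$. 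Then $\mathcal{D}_{N_0} \cap (0,\theta')$ and $\mathcal{D}_{N_0} \cap (\theta',1]$ are disjoint, relatively open in $\mathcal{D}_{N_0}$, both non-empty, and cover $\mathcal{D}_{N_0}$. This disconnects $\mathcal{D}_{N_0}$, and hence disconnects $\mathcal{N}_{N_0}$, a contradiction. So $\theta_{K_{N_0}} = 1$, giving $(0,1] \subset \mathcal{D}_{N_0} \subset (0,1]$.

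The main obstacle is justifying that connectedness of $\mathcal{N}_{N_0}$ as a set of distributions transfers to connectedness of $\mathcal{D}_{N_0} \subset \mathbb{R}$. This requires the inverse map $N \mapsto p_1(N)$ to be continuous. I would handle it by noting that the model's topology is the one induced by the probability sequences, under which the coordinate $p_1$ is continuous. With that in place, a separation of $\mathcal{D}_{N_0}$ pulls back to a separation of the model, which is exactly what the necessity argument uses.
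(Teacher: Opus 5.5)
Your proof is correct and follows the paper's argument: Theorem~\ref{thm:intp} gives the anchored interval $(0,\theta_{K_{N_0}}]$, $N_I$ sits at $\theta=1$, and connectedness then forces $\theta_{K_{N_0}}=1$. Your explicit gap point $\theta'\in(\theta_{K_{N_0}},1)$ not in $\mathcal{D}_{N_0}$ is a tighter justification than the paper's bare assertion that $\theta=1$ would be an isolated point, since that assertion needs an extra argument the paper does not give. You also supply the easy converse and the continuity of $N\mapsto p_1(N)$, both of which the paper leaves implicit.
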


The threshold $\theta_{K_{N_0}}$ in Theorem \ref{thm:intp} is a fundamental constant
determined by the Koenigs function $K_{N_0}$, and
determining the parameter space of $\mathcal{N}_{N_0}$.
We conclude by describing all sub-models of $\mathcal{N}_{N_0}$ that are closed,
connected and with a non-empty interior. Each of these shares the same $K_{N_0}$, and
its parameter space is an interval anchored at $0$, with upper bound not
exceeding $\theta_{K_{N_0}}$.

\begin{corollary}
Let $N_0$ be positive integer-valued with $0<p_1(N_0)<1$.
Any sub-model $\mathcal{N} \subset \mathcal{N}_{N_0}$ that is closed under pgf composition,
connected and with a non-empty interior must satisfy:
$$  \mathring{\mathcal{N}} =
\{ N_{\theta} \in \mathcal{N}_{N_0} \mid \theta \in (0,\theta_\mathcal{N})  \},     $$
for some $\theta_\mathcal{N} \in (0, \theta_{K_{N_0}}]$.
\label{cor:gdmd}
\end{corollary}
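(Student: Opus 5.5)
We prove Corollary \ref{cor:gdmd} by working entirely in the global parametrization of Corollary \ref{cor:parh}. By that result, every $N \in \mathcal{N}_{N_0}$ is identified by $\theta = p_1(N) \in (0,1]$. The sub-model $\mathcal{N}$ therefore corresponds to a parameter set $\mathcal{D}_\mathcal{N} \subset \mathcal{D}_{N_0} \subset (0,1]$. By Proposition \ref{prop:com}, composition corresponds to multiplication of parameters: $N_{\theta_1}\circ N_{\theta_2} = N_{\theta_1\theta_2}$. Closure of $\mathcal{N}$ under pgf composition is thus equivalent to $\mathcal{D}_\mathcal{N}$ being a multiplicative semigroup inside $(0,1]$. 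Connectedness and non-empty interior of $\mathcal{N}$ translate, through this homeomorphic identification, into $\mathcal{D}_\mathcal{N}$ being an interval with non-empty interior.

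\textbf{Step 1: an interval anchored at the origin.} I would rerun the argument of Theorem \ref{thm:intp} with $\mathcal{D}_\mathcal{N}$ in place of $\mathcal{D}_{N_0}$. Pick an open $(\theta_1,\theta_2)\subset \mathcal{D}_\mathcal{N}$. The semigroup property puts $(\theta_1^k,\theta_2^k)$ inside $\mathcal{D}_\mathcal{N}$ for all $k\ge 1$. These intervals overlap for large $k$, so some $(0,\theta_2^{k_0})$ lies in $\mathcal{D}_\mathcal{N}$. In fact, because $\mathcal{D}_\mathcal{N}$ is itself an interval, this step is only needed to show that its infimum is $0$ and is not attained, since $0\notin(0,1]$.

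\textbf{Step 2: identify the interior.} Let $\theta_\mathcal{N} = \sup \mathcal{D}_\mathcal{N}$. Since $\mathcal{D}_\mathcal{N}$ is an interval with infimum $0$, it equals $(0,\theta_\mathcal{N})$ or $(0,\theta_\mathcal{N}]$. In either case its interior is $(0,\theta_\mathcal{N})$, which gives the displayed form of $\mathring{\mathcal{N}}$.

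\textbf{Step 3: bound the threshold.} To get $\theta_\mathcal{N} \le \theta_{K_{N_0}}$, note that $(0,\theta_\mathcal{N}) \subset \mathcal{D}_\mathcal{N} \subset \mathcal{D}_{N_0}$. Then $(0,\theta] \subset \mathcal{D}_{N_0}$ for every $\theta<\theta_\mathcal{N}$. By the definition of $\theta_{K_{N_0}}$ as the supremum of such $\theta$ in the proof of Theorem \ref{thm:intp}, this gives $\theta_\mathcal{N}\le\theta_{K_{N_0}}$. Finally, $\theta_\mathcal{N}>0$ because the interior is non-empty.

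\textbf{Main obstacle.} The delicate point is justifying that connectedness of $\mathcal{N}$ as a stopping model really means $\mathcal{D}_\mathcal{N}$ is an interval. This requires the map $\theta \mapsto N_\theta$ to be a homeomorphism onto its image in the topology used for the model. I would argue this via continuity of $\theta \mapsto p_i(\theta)$, which holds because these are polynomials by Corollary \ref{col:pol}. In the reverse direction, $\theta = p_1$ is a coordinate, so the inverse map is continuous as well.
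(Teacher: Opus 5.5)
Your proposal is correct and is essentially the argument the paper leaves implicit. You transport to the global parameter $\theta=p_1$ and read closure under composition as closure of $\mathcal{D}_\mathcal{N}$ under multiplication, so the anchoring argument of Theorem \ref{thm:intp} reruns for the sub-model, and the upper endpoint is bounded by the supremum that defines $\theta_{K_{N_0}}$. Your observation that connectedness already makes $\mathcal{D}_\mathcal{N}$ an interval, so that the semigroup property is needed only to force $\inf \mathcal{D}_\mathcal{N}=0$, is a clean simplification of that step.
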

In the following section, we find that finite-dimensional stopping models closed under pgf composition, connected
and with a non-empty interior are necessarily as in Corollary \ref{cor:gdmd}.

\section{Finite-dimensional models closed under pgf composition}

The stopping models $\mathcal{N}_{N_0}$, defined to be the set of all r.v.'s that commute with a given $N_0$, are
closed under pgf composition and have a parameter space of at most one dimension.
Conversely, any stopping model that is closed under pgf composition, connected and has a non-empty interior,
yet is not a subset of $\mathcal{N}_{N_0}$,
must necessarily contain at least two non-commuting r.v.'s.

In this section, we demonstrate that if such a stopping model
contains non-commuting r.v.'s, it must be infinite-dimensional.
It follows that any finite-dimensional stopping model satisfying these topological and algebraic properties
must consist of a family of r.v.'s that commute, thereby restricting its parameter space to be one-dimensional.

\subsection{Preliminary notation and result}

This section investigates finite-dimensional stopping models $\mathcal{N}$
that are closed under pgf composition, connected, and possess a non-empty interior.
We assume $\mathcal{N}$ is parametric, though it might only admit a local parametrization;
consequently, our notation in this section avoids reliance on a global parameter space.
We further assume that the probabilities $p_i(N) = \Pr(N=i)$ are continuously differentiable
with respect to the parameters for all $i \ge 1$.

We partition $\mathcal{N}$ into submodels $\mathcal{N}_s$ $(s \ge 1)$
based on the smallest index with a non-zero probability:
$$ \mathcal{N}_s = \{ N \in \mathcal{N} \mid p_i(N)=0 \text{ for } i<s, \text{ and } p_s(N)>0    \} \subset \mathcal{N}. $$

The \emph{order} of a stopping model $\mathcal{N}$, denoted by $o$, is the smallest integer $s$
for which $\mathcal{N}_s$ is non-empty:
$$  o=\min\{ s \mid \mathcal{N}_{s} \neq \emptyset \}.  $$
Under these definitions, the following properties hold:
\begin{enumerate}
\item
If $N_{1} \in \mathcal{N}_{s_1}$ and $N_{2} \in \mathcal{N}_{s_2}$, then
$N_{1}\circ N_{2} \in \mathcal{N}_{s_1 s_2}$. In particular, the k-fold composition
$N_{1}^{\circ k}$ lies in $\mathcal{N}_{s_1^k}$.
\item
If $o=1$, there exists at least one $N \in \mathcal{N}$ s.t. $p_1(N)>0$. If $\mathcal{N}$ is closed
under pgf composition, then the submodel $\mathcal{N}_1$ is likewise closed.
In certain cases, $\mathcal{N}_{1}=\mathcal{N}$.
\item
If $o>1$, then $p_1(N)=0$ for all $N \in \mathcal{N}$. Here,
the submodel $\mathcal{N}_o$ cannot be closed under pgf composition because
$N_{1}, N_{2} \in \mathcal{N}_o$ implies $N_1\circ N_2 \in \mathcal{N}_{o^2}$,
where $\mathcal{N}_o\cap\mathcal{N}_{o^2}=\emptyset$. Note that
$\mathcal{N}_{o^k}\neq\emptyset$ for all $k \ge 1$.
\item
The submodel $\mathcal{N}_o = \mathcal{N} \cap \{ N \mid p_o(N) > 0 \}$ is an open subset of $\mathcal{N}$
and therefore shares the same dimensionality as the full model $\mathcal{N}$.
\end{enumerate}
The following proposition (proven in Appendix 2) demonstrates that for a finite-dimensional, connected stopping
model to be closed under composition and possess a non-empty interior, it must
necessarily satisfy $p_1(N)>0$ for all $N \in \mathcal{N}$.
\begin{proposition}
Let $\mathcal{N}$
be a finite-dimensional stopping model closed under pgf composition with a non-empty interior. Assume the
probabilities $p_i(N)$ are continuously differentiable with respect to the parameters for all $i$.
Then $\mathcal{N}$ is connected only if $\mathcal{N} = \mathcal{N}_1$.
 \label{prop:tpr1}
\end{proposition}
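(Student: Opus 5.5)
We argue by contradiction: suppose $\mathcal{N}$ is connected, finite-dimensional (dimension $m\ge 1$), closed under pgf composition, with non-empty interior, but $\mathcal{N}\neq\mathcal{N}_1$. The plan is to show that the pieces $\mathcal{N}_s$ are pairwise separated and each is open in $\mathcal{N}$. Then $\mathcal{N}_1$ and its complement would give a disconnection.

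\textbf{Step 1: each piece is relatively open and closed.} For every $s$ we have
$$\mathcal{N}_s=\{N \mid p_s(N)>0\}\cap\{N\mid p_i(N)=0,\ i<s\}.$$
The first set is open by continuity of $p_s$. The second set is closed. So $\mathcal{N}_s$ is relatively open inside the closed set $\bigcup_{r\ge s}\mathcal{N}_r$. The union $\bigcup_{s'\le s}\mathcal{N}_{s'}=\{N \mid \sum_{i\le s}p_i(N)>0\}$ is open for every $s$. In particular $\mathcal{N}_1=\{p_1>0\}$ is open.

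\textbf{Step 2: $\mathcal{N}_1$ is closed in $\mathcal{N}$.} This requires ruling out points with $p_1=0$ in the closure of $\{p_1>0\}$. It is the key step, and I would use the finite-dimensionality and non-empty interior together with closure under composition. Suppose $N^*$ with $p_1(N^*)=0$, say $N^*\in\mathcal{N}_s$ with $s\ge2$, is a limit of points of $\mathcal{N}_1$. Since $\mathcal{N}_s$ is open in $\bigcup_{r\ge s}\mathcal{N}_r$ and $\mathcal{N}$ has dimension $m$ near $N^*$, one of two things happens.

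In the first case, the stratum $\{p_1=\dots=p_{s-1}=0\}$ has dimension $m$ near $N^*$. This contradicts the accumulation of $\mathcal{N}_1$ points at $N^*$, provided we can show the zero set of $p_1$ has no interior at a boundary point. That does not follow from continuous differentiability alone, so I would rather use composition.

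The plan is as follows. Take $N_1\in\mathcal{N}_1$ close to $N^*$ and consider the compositions $N_1^{\circ k}$. Their first masses satisfy $p_1(N_1^{\circ k})=p_1(N_1)^k$. By continuity of composition, the composites $N_1\circ N^*$ and $N^*\circ N_1$ lie in $\mathcal{N}_s$. The maps $N\mapsto N_1\circ N$ are smooth injective maps of $\mathcal{N}$ into itself, with injectivity because pgfs are strictly increasing. By invariance of domain in the $m$-manifold, these maps send open sets to open sets. I would use this to transport a neighbourhood of $N^*$ meeting both $\mathcal{N}_1$ and $\mathcal{N}_s$ onto neighbourhoods near $N^{*\circ k}\in\mathcal{N}_{s^k}$.

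This produces infinitely many distinct strata $\mathcal{N}_{s^k}$, each containing points of the same $m$-dimensional manifold. These are arranged so that on a compact connected set the function $\min\{i: p_i>0\}$ takes unboundedly many values. By Step 1 the sets $\{\sum_{i\le n}p_i>0\}$ are open, and they exhaust $\mathcal{N}$. Compactness would then bound the order, which is a contradiction.

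\textbf{Main obstacle.} Making this compactness/stratum argument rigorous is the hardest part. I would try first to exploit that $N^*$ is a limit of $N_\theta\in\mathcal{N}_1$ that, near any fixed $N_0\in\mathcal{N}_1$, lie in the commuting family $\mathcal{N}_{N_0}$. Under Corollary \ref{cor:parh}, such a family is parametrized by $\theta$ and has Taylor coefficients polynomial in $\theta$ (Corollary \ref{col:pol}). A limit with $\theta\to 0$ then gives $p_i\to0$ for all $i$, which is not a probability distribution and hence not a member of $\mathcal{N}$. This shows $\mathcal{N}_1$ is closed, so connectedness forces $\mathcal{N}=\mathcal{N}_1$.
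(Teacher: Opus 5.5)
\textbf{Gaps in Step~2 and in the case $o>1$.} Your Step~1 is fine, but Step~2, which carries the whole proof, is not established.

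Left composition with $N_1\in\mathcal{N}_1$ preserves the order of every element. Transporting neighbourhoods of $N^*$ with it therefore cannot separate strata.

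The existence of infinitely many non-empty strata $\mathcal{N}_{s^k}$ is no contradiction either. It follows from closure as soon as one $N^*\in\mathcal{N}_s$ with $s\ge 2$ exists. You never produce the compact set on which the order is supposed to be unbounded. In fact the iterates $N^{*\circ k}$ have all masses tending to $0$, so they leave every compact subset of $\mathcal{N}$ and no compactness argument applies.

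Your fallback assumes that the $\mathcal{N}_1$-points accumulating at $N^*$ all belong to one commuting family $\mathcal{N}_{N_0}$. That is essentially the conclusion of Theorem~\ref{thm:ult2}. Its proof uses the very proposition you are proving: once to obtain $N_0\in\mathcal{N}_1$, and again in Proposition~\ref{prop:ff2}, which needs $p_1(N)>0$ for the non-commuting $N$. Moreover, Proposition~\ref{prop:ff1} only places a one-parameter curve through $N_0$ inside $\mathcal{N}_{N_0}$. It does not give a neighbourhood, let alone a sequence converging to $N^*$.

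Finally, your argument ignores the case $o>1$. There $\mathcal{N}_1=\emptyset$, so ``$\mathcal{N}_1$ is clopen'' yields no disconnection. You must work with $\mathcal{N}_o=\{p_o>0\}$ and use that $\mathcal{N}_{o^2}\neq\emptyset$ by closure, so that $\mathcal{N}_o^c\neq\emptyset$. In that case the fallback is unavailable anyway, since $p_1\equiv 0$.

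\textbf{The missing idea: compose in the other slot.} This moves the problem from $N^*$ to a point where an open neighbourhood inside the higher stratum is guaranteed.

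Pick an open ball $\mathcal{B}_c\subset\mathcal{N}_o$ centred at some $N_c$. Right composition $N\mapsto N\circ N^*$ is continuous and injective, because $h_{N^*}$ maps $[0,1]$ onto $[0,1]$. By invariance of domain, $\mathcal{B}_c\circ N^*$ is an open subset of $\mathcal{N}$ that contains $N_c\circ N^*$ and lies in $\mathcal{N}_{so}$.

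Now take $N_i\in\mathcal{N}_o$ with $N_i\to N^*$. Then $N_c\circ N_i\to N_c\circ N^*$, while each $N_c\circ N_i$ lies in $\mathcal{N}_{o^2}$. For large $i$ these points would lie in $\mathcal{N}_{o^2}\cap\mathcal{N}_{so}$, which is empty because $so>o^2$.

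Hence $\mathcal{N}_o$ is closed as well as open, and connectedness forces $\mathcal{N}=\mathcal{N}_o$. Since $\mathcal{N}_{o^2}\neq\emptyset$ whenever $o>1$, this means $o=1$ and $\mathcal{N}=\mathcal{N}_1$. This is exactly the paper's proof.

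It also answers the worry you raised. You never need $\{p_1=0\}$ to have interior at $N^*$ itself. You only need interior at $N_c\circ N^*$, which is likewise a limit of lower-order points.
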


\subsection{Main result}

The central result of this paper establishes that any finite-dimensional stopping model
closed under pgf composition, connected and possessing a non-empty interior must consist of
a family of r.v.'s that commute. Consequently, such a model is necessarily one-dimensional.
\begin{theorem}
Let $\mathcal{N}$ be a finite-dimensional stopping model that is
closed under pgf composition, connected, and has a non-empty interior.
Assume the probabilities $p_i(N)=\Pr(N=i)$ are continuously differentiable with respect to the parameters for all $i$.
Then $\mathcal{N}$ consists of a family of r.v.'s that commute, taking the form described in Corollary \ref{cor:gdmd}.
\label{thm:ult2}
\end{theorem}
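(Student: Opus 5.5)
By Proposition \ref{prop:tpr1}, every $N\in\mathcal{N}$ has $p_1(N)>0$, so every pgf $h_N$ has $0<h_N'(0)\le 1$. The plan is to study the map $\Phi:\mathcal{N}\to(0,1]$, $N\mapsto p_1(N)=h_N'(0)$. It is multiplicative under composition, since $(h_1\circ h_2)'(0)=h_1'(0)h_2'(0)$. I will show that $\Phi$ is injective on $\mathcal{N}$; the structure theory of Section 3 then does the rest.

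\textbf{Key step.} Suppose $N_a,N_b\in\mathcal{N}$ do not commute. Choose $N_0\in\mathcal{N}$ with $0<p_1(N_0)<1$. Such an $N_0$ exists: otherwise every element would have $p_1=1$, hence be $N_I$, contradicting the non-empty interior.

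Since $\mathcal{N}$ is closed, it contains all words in $N_0$, $N_a$, $N_b$. The idea is to use conjugation. For any $M\in\mathcal{N}$ with $p_1(M)<1$, the Koenigs function $K_M$ linearises $h_M$. The family of pgfs $K_M^{-1}(\lambda K_M)$ is one-dimensional.

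Non-commutativity means that $N_a$ does not lie in $\mathcal{N}_{N_b}$ (assuming $p_1(N_b)<1$), nor in any other commuting family. I then consider the compositions
$$
h_{N_a}^{\circ k_1}\circ h_{N_b}^{\circ j_1}\circ h_{N_a}^{\circ k_2}\circ\cdots .
$$
All of these lie in $\mathcal{N}$, and their Taylor coefficients are polynomial in the finitely many parameters. To contradict finite dimensionality $m$, I would build, near an interior point, a smooth map from $\mathbb{R}^{m+1}$ into $\mathcal{N}$ with injective differential. Concretely, I take the interior parameter $\delta$ and perturb it along $m$ directions, which gives an $m$-dimensional chart $U$. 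I then compose $U$ with an element $N_c\in\mathcal{N}$ chosen so that the tangent vector $\partial_\epsilon(\ldots)$ is not in the tangent space of $U$. The composition map $U\times U\to\mathcal{N}$, $(N,N')\mapsto N\circ N'$, is then a submersion with image of dimension $>m$.

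The rank computation goes through the Lie-algebra picture. Tangent vectors to $\mathcal{N}$ at $N_I$-like points (reached by taking ratios as in the Remark after Theorem \ref{thm:intp}) are vector fields $v(t)\partial_t$ on $[0,1)$ vanishing at $0$. Closure makes this set of tangent directions closed under the commutator of vector fields, so it behaves like a finite-dimensional Lie algebra of analytic vector fields on $[0,1)$ vanishing at $0$. By Lie's classification of finite-dimensional Lie algebras of analytic vector fields in one variable, such an algebra is locally conjugate to a subalgebra of $\langle \partial_x,x\partial_x,x^2\partial_x\rangle$. Since every field vanishes at the fixed point $0$, the algebra has dimension at most $2$, so it is either abelian of dimension 1 or of affine type $\langle x\partial_x,x^2\partial_x\rangle$.

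\textbf{Main obstacle.} The hard part is excluding the affine (Möbius) type. In that case $\mathcal{N}$ would be locally $\{t\mapsto \Psi^{-1}(a\Psi(t)/(1+b\Psi(t)))\}$ for some analytic $\Psi$ with $\Psi(0)=0$. My plan is to show that the positivity of all coefficients and the boundary conditions $h(1)=1$ at the fixed point $1$ force $b$ to be determined by $a$. Indeed, every $h$ must also fix $1$, so $\Psi(1)$ is a common fixed point of the maps $a s/(1+bs)$, which gives $b=(a-1)/\Psi(1)$ (or $\Psi(1)=\infty$, which gives $b=0$). That leaves a one-parameter family. Hence $\Phi$ is injective locally, and by connectedness globally. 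In particular, any two elements lie in the common family $\mathcal{G}_{N_0}$ of Theorem \ref{thm:par}, because an element commuting infinitesimally with $N_0$ commutes with it. Therefore $\mathcal{N}\subset\mathcal{N}_{N_0}$.

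Corollary \ref{cor:gdmd} then gives the stated form. Making the Lie-algebra argument rigorous, in particular establishing that the tangent directions are closed under brackets with only $C^1$ parameter dependence, is where I expect the real difficulty. My first attempt would be to use a Gleason–Montgomery–Zippin-type argument (a locally compact, finite-dimensional local semigroup of analytic maps, embedded as a local Lie group via ratios $h_1^{-1}\circ h_2$).
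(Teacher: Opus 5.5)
\textbf{There is a genuine gap at the core of your proposal.} Everything after the first paragraph of your key step rests on one claim: the tangent directions of $\mathcal{N}$, transported to the identity by ratios $h_{N_c}^{-1}\circ h_N$, form a finite-dimensional Lie algebra of analytic vector fields on $[0,1)$ that is closed under the bracket. In other words, you need $\mathcal{N}$ to sit locally, as an open set, inside an $m$-dimensional local Lie group of germs. Nothing in the hypotheses gives you this.
\begin{itemize}
\item $\mathcal{N}$ is only a semigroup, and $N_I$ need not belong to it.
\item The ratios $h_{N_1}^{-1}\circ h_{N_2}$ are not elements of $\mathcal{N}$. A priori they form a family of dimension up to $2m$.
\item For a subsemigroup, even of a genuine Lie group, the tangent object at the identity is only a wedge. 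Bracket-closure would come from $\mathcal{N}$ being open in an $m$-dimensional generated group, which is exactly what has to be proved.
\item Only the individual $p_i$ are assumed $C^1$. So even the statement that a tangent vector is a convergent analytic vector field on $[0,1)$ needs an argument.
\end{itemize}
Gleason--Montgomery--Zippin and its local versions concern (local) groups, not local semigroups sitting in an infinite-dimensional function space. You flag this step as ``the real difficulty'', but it is not a technicality: it carries essentially the whole content of the theorem.

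\textbf{The later steps are sound in principle but depend on that claim.} The bound $\dim\le 2$ for algebras of fields vanishing at $0$ is correct by a simple order-of-vanishing argument. The exclusion of the affine type through the fixed point at $1$ is a good idea, but it must be done for every $\langle x\partial_x, x^{k+1}\partial_x\rangle$ with $k\ge 1$, not only the M\"obius case $k=1$. It also requires showing that $\Psi$ extends along all of $[0,1)$, so that $\Psi(1)$ makes sense.

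\textbf{Two further points.}
\begin{itemize}
\item Your first sketch, composing a chart $U$ with some $N_c$ to get a family of dimension $>m$, is never carried out: no $N_c$ is produced and no rank is computed.
\item ``$\Phi$ is injective locally, and by connectedness globally'' is a non sequitur as stated. What does work is this: the set of $N\in\mathcal{N}$ commuting with $h_{N_0}$ is closed, and it is open once local commutativity is combined with Proposition \ref{prop:erpl1}.
\end{itemize}

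\textbf{How the paper avoids the Lie structure.} After Proposition \ref{prop:tpr1}, Proposition \ref{prop:ff1} places an arc $\{h\in\mathcal{H}_{N_0}:\theta\in(\theta_0-\epsilon,\theta_0+\epsilon)\}$ of the commuting family of an interior point $N_0$ inside $\mathcal{H}_\mathcal{N}$. Suppose some $h_N\in\mathcal{H}_\mathcal{N}$ fails to commute with $h_{N_0}$. The paper then takes alternating words built from $h_N$ and elements of that arc; these lie in $\mathcal{H}_\mathcal{N}$ by closure. Conjugating by the Koenigs function $K_{N_0}$ turns them into $(\theta_1 f)\circ\cdots\circ(\theta_{n-1} f)\circ(\theta_n t)$, where $f=K_{N_0}\circ h_N\circ K_{N_0}^{-1}$ is nonlinear. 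Proposition \ref{prop:ff2} and Lemma \ref{lem:dmn} assert that this family is $n$-dimensional for every $n$, contradicting finite dimensionality.

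This is precisely the ``family of dimension $>m$'' you were reaching for in your abandoned first sketch. The missing idea is to use the one-parameter commuting arc through $N_0$ as the free directions, interleaved with the non-commuting element. That turns non-commutativity directly into unbounded dimension, with no need to build a Lie group or compute brackets.
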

The proof of Theorem \ref{thm:ult2} rests on two key propositions (proven in Appendix 2).
The first demonstrates that for a model $\mathcal{N}$ satisfying the conditions of the theorem,
the intersection of its associated set of pgfs, $\mathcal{H}_\mathcal{N}$,
with the set of pgfs that commute with $h_{N_0}$ for $N_0 \in \mathcal{N}$, $\mathcal{H}_{N_0}$,
retains a non-empty interior (relative to $\mathcal{H}_{N_0}$).
\begin{proposition}
Let $\mathcal{N}$ satisfy the conditions of Theorem \ref{thm:ult2}.
There exists a pgf $h_{N_0} \in \mathring{\mathcal{H}}_{\mathcal{N}}$ s.t.:
$$ \{ h_{N} \in \mathcal{H}_{N_0} \mid \theta \in (\theta_0-\epsilon,\theta_0+\epsilon)\ \} \subset
\mathcal{H}_\mathcal{N} \cap \mathcal{H}_{N_{0}}, $$
where $\mathcal{H}_{N_{0}}$ denotes the set of pgfs of the commuting family $\mathcal{N}_{N_{0}}$.
 \label{prop:ff1}
\end{proposition}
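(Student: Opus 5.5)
The natural route is through the iterates of a single element. First, I would invoke Proposition \ref{prop:tpr1}: since $\mathcal{N}$ is finite-dimensional, closed, connected and has non-empty interior, $\mathcal{N}=\mathcal{N}_1$, so every $N\in\mathcal{N}$ has $p_1(N)>0$. Next I would pick $N_0$ in the interior $\mathring{\mathcal{N}}$ with $N_0\neq N_I$ and $0<p_1(N_0)<1$. Such a point exists because the interior is open, of positive dimension, and $p_1\le 1$, with $p_1=1$ only at $N_I$. With this choice Theorem \ref{thm:par} applies, giving $\mathcal{H}_{N_0}=\{h_{N_\theta}=K_{N_0}^{-1}(\theta K_{N_0})\}$. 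By closure, every iterate $N_0^{\circ k}=N_{\theta_0^k}$, where $\theta_0=p_1(N_0)$, lies in $\mathcal{H}_\mathcal{N}\cap\mathcal{H}_{N_0}$. So the intersection already contains the discrete orbit $\{\theta_0^k\}$, and the task is to upgrade it to a full interval around $\theta_0$.

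To do the upgrade, I would use the one-parameter structure to produce a continuous curve in $\mathcal{H}_\mathcal{N}$ through $h_{N_0}$ that commutes with $h_{N_0}$. The plan has three steps.

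\begin{enumerate}
\item For $h_N$ in a small neighbourhood $U\subset\mathring{\mathcal{H}}_\mathcal{N}$ of $h_{N_0}$, consider the conjugation $h\mapsto h_{N_0}^{\circ k}\circ h$. Closure keeps this in $\mathcal{N}$, and its first coefficient is $\theta_0^k p_1(N)$.
\item Use the Koenigs linearisation $K_{N_0}\circ h\circ K_{N_0}^{-1}$. Elements of $\mathcal{H}_{N_0}$ correspond exactly to multiplications $s\mapsto\theta s$.
\item Consider the maps $\Phi_k(N)=\theta_0^{-k}\,K_{N_0}\circ h_{N_0}^{\circ k}\circ h_N\circ K_{N_0}^{-1}$. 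Since $\theta_0^{-k}K_{N_0}\circ h_{N_0}^{\circ k}\to K_{N_0}$, these conjugates converge locally uniformly to $K_{N_0}\circ h_N\circ K_{N_0}^{-1}$.
\end{enumerate}

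This is the wrong direction for producing commuting elements, so I would instead use the iterates $h_N^{\circ k}$ for $N\in U$ with $p_1(N)$ near $\theta_0$. Rescaled through the Koenigs function of $h_N$ itself, $h_N^{\circ k}$ stays in $\mathcal{N}$ and approaches the commuting family of $N$.

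The cleaner approach is a dimension argument. Suppose the intersection contained no interval. Then $\theta\mapsto h_{N_\theta}$ would leave $\mathcal{H}_\mathcal{N}$ arbitrarily close to $\theta_0$, while $\mathcal{H}_\mathcal{N}$ is an $m$-dimensional $C^1$ manifold containing the points $h_{N_0}^{\circ k}$. I would argue via the map $(N_1,N_2)\mapsto N_1\circ N_2$, which is $C^1$ on $\mathcal{N}\times\mathcal{N}\to\mathcal{N}$. Its restriction to fixed $N_2=N_0$ gives a $C^1$ self-map $L$ of $\mathcal{N}$. Near the fixed structure, $L$ has $D$ acting on tangent vectors $v=(\delta p_i)$ via $\delta h\mapsto h_{N_0}'(h_{N_0})\cdot\delta h\circ$, and $R(N)=N\circ N_0$ acts similarly. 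The commutator $C(N)=h_N\circ h_{N_0}-h_{N_0}\circ h_N$ is then a $C^1$ map on $\mathcal{N}$. I would show that its zero set near $N_0$ is a $C^1$ curve, by showing that the derivative of $C$ at $N_0$ has kernel of dimension exactly one on $T\mathcal{N}$. This kernel contains the direction $\frac{d}{d\theta}h_{N_\theta}$: if $\mathcal{N}\ni N_0^{\circ k}$, a differentiable path through $N_0$ and $N_0^{\circ 2}$ can be generated via compositions $N\circ N_0$. The kernel is computed from the linearised Schröder equation. Its solutions $\delta h$ satisfy $\delta h\circ h_{N_0}\cdot$ stuff $=h_{N_0}'(h_{N_0})\delta h$, and in Koenigs coordinates these reduce to $\phi(\theta_0 s)=\theta_0\phi(s)$ with $\phi$ analytic at $0$. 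This forces $\phi(s)=cs$, so the kernel is one-dimensional and is spanned exactly by the $\theta$-direction.

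The main obstacle is converting this infinitesimal statement into actual inclusion of an interval: a one-dimensional kernel of $DC$ does not by itself give a curve of zeros inside $\mathcal{N}$. I would try to obtain the curve by taking limits. Points $N_0^{\circ k}$ and the sets $h_{N_0}^{\circ k}\circ U$ are $m$-dimensional neighbourhoods inside $\mathcal{N}$ near $N_{\theta_0^k}$, and their closure structure under the contraction toward the commuting curve should yield the interval. Establishing this rigorously, presumably via the $C^1$ hypothesis and an implicit function argument on $C$ restricted to a transversal slice, is where I expect the real work to lie.
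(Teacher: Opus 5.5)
There is a genuine gap. The only elements of $\mathcal{H}_\mathcal{N}\cap\mathcal{H}_{N_0}$ your argument actually produces are the iterates $h_{N_0}^{\circ k}$, at $\theta=\theta_0^k$. None of these except $h_{N_0}$ itself is near $\theta_0$.

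The kernel computation does not fill this.
\begin{itemize}
\item It shows, correctly, that the linearised commutation equation $\delta h\circ h_{N_0}=h_{N_0}'(h_{N_0})\,\delta h$ has a one-dimensional solution space. But that only says the commuting family is at most a curve, which is the infinitesimal content of Theorem \ref{thm:par}. It says nothing about whether that curve lies in $\mathcal{N}$.
\item The claim that the $\theta$-direction belongs to $T_{N_0}\mathcal{N}$ is itself the infinitesimal form of the proposition, and your justification does not work. A path $N\circ N_0$ from $N_0$ to $N_0^{\circ 2}$ needs $N$ to start at $N_I$, which need not be in $\mathcal{N}$, and its points need not commute with $N_0$.
\item Even tangency at $N_0$ would not put the curve inside the $m$-dimensional manifold $\mathcal{N}$, which may meet it only at isolated points. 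You acknowledge that this step is open, but it is the whole content of the statement.
\end{itemize}
Your step 3 is also vacuous: $\theta_0^{-k}K_{N_0}\circ h_{N_0}^{\circ k}=K_{N_0}$ exactly, by the Schr\"oder equation, so $\Phi_k$ does not depend on $k$.

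What is missing is a mechanism by which closure plus finite dimensionality produces genuinely commuting elements. One route that should work:
\begin{itemize}
\item The map $N\mapsto N_0\circ N$ is continuous and injective on $\mathcal{N}$. By invariance of domain it maps a neighbourhood of $N_0$ onto an open neighbourhood of $N_0^{\circ 2}$ in $\mathcal{N}$; the paper uses the same kind of fact in the proof of Proposition \ref{prop:tpr1}.
\item Hence for $N$ near $N_0$ you get $N\circ N_0=N_0\circ N'$ with $N'\in\mathcal{N}$, i.e.\ $h_{N_0}^{-1}\circ h_N\circ h_{N_0}\in\mathcal{H}_\mathcal{N}$.
\item Write $K_{N_0}\circ h_N\circ K_{N_0}^{-1}(s)=\sum_i a_i s^i$, with $a_1=p_1(N)$. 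This conjugation multiplies $a_i$ by $\theta_0^{i-1}$, so its iterates converge to $g_{p_1(N)}$.
\item With some care to keep the orbit inside a compact chart neighbourhood, local closedness gives $g_{p_1(N)}\in\mathcal{N}$. Choosing $N_0$ with $\nabla p_1\neq 0$ then yields the interval.
\end{itemize}
This is the contraction toward the commuting curve you allude to at the end, run in the right direction.

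The paper argues differently and much more briefly. At a point with $\nabla p_1\neq0$ it takes local coordinates $(\theta,\delta^*)$ with $\theta=p_1$. It then asserts that two points on the slice $\delta^*=\delta_0^*$ compose to $N_{(\theta_1\theta_2,\delta_0^*)}$, and hence commute. That assertion is not derived from anything. The chart is valid only near $N_0$, while the composite lies near $N_0^{\circ 2}$. Nothing forces the composite to depend on $(\theta_1,\theta_2)$ only through $\theta_1\theta_2$. So following the paper would not supply the step you are missing either.
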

The second proposition
provides a ``dimension-explosion" argument; if the stopping model contains even two non-commuting random variables,
the dimension of $\mathcal{H}_\mathcal{N}$ must become arbitrarily large,
contradicting the initial finite-dimensionality assumption.
\begin{proposition}
Let $\mathcal{N}$ satisfy the conditions of Theorem \ref{thm:ult2},
and let $h_{N_{0}} \in \mathring{\mathcal{H}}_{\mathcal{N}}$. If there exists a pgf $h_{N} \in \mathcal{H}_\mathcal{N}$
that does not commute with $h_{N_{0}}$, then for any $n \in \mathbb{N}^{+}$, the set of compositions:
$$ \mathcal{G}_{N_{0},N,n} = \{ g_{\theta_1,\ldots,\theta_n} =
g_{\theta_1} \circ h_{N} \circ g_{\theta_2} \circ h_{N}
\circ \dots \circ  g_{\theta_n} \circ h_{N} \mid g_{\theta_i} \in \mathcal{G}_{N_0}   \},   $$
possesses a parameter space of dimension $n$.
\label{prop:ff2}
\end{proposition}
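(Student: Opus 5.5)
We will prove Proposition \ref{prop:ff2} by induction on $n$, showing that the map
$$\Phi_n:(\theta_1,\ldots,\theta_n)\mapsto g_{\theta_1}\circ h_N\circ\cdots\circ g_{\theta_n}\circ h_N$$
from $(0,\infty)^n$ into the space of analytic functions on $[0,1)$ has full rank $n$ at some point. It then has full rank on an open set, since rank is lower semicontinuous and the map is real-analytic in $\theta$ because $g_\theta=K_{N_0}^{-1}(\theta K_{N_0})$. We work at the level of germs at $0$, that is, through Taylor coefficients. Since $h_N\in\mathcal{H}_\mathcal{N}=\mathcal{H}_{\mathcal{N}_1}$ by Proposition \ref{prop:tpr1}, we have $p_1(N)>0$, so $h_N$ is invertible near $0$.

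The natural simplification is to conjugate by the Koenigs function. Write $\phi=K_{N_0}\circ h_N\circ K_{N_0}^{-1}$, an analytic germ with $\phi(0)=0$ and $\phi'(0)=p_1(N)=:\lambda$. Then
$$K_{N_0}\circ\Phi_n(\theta)\circ K_{N_0}^{-1}=M_{\theta_1}\circ\phi\circ M_{\theta_2}\circ\phi\circ\cdots\circ M_{\theta_n}\circ\phi,$$
where $M_\theta(x)=\theta x$. Rank is unchanged by this conjugation. Moreover, $h_N$ commutes with $h_{N_0}$ if and only if $\phi$ commutes with $M_{p_1(N_0)}$. By Theorem \ref{thm:par}, this happens if and only if $\phi$ is linear, $\phi(x)=\lambda x$. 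So non-commutation means that $\phi(x)=\lambda x+\sum_{j\ge m}d_jx^j$ with some first nonzero nonlinear coefficient $d_m\ne0$, $m\ge2$.

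The core computation is the tangent space. Differentiating $\Psi=M_{\theta_1}\circ\phi\circ\cdots$ in $\theta_k$ and using the chain rule, we get $\partial_{\theta_k}\Psi=A_k'(B_k)\cdot B_k/\theta_k$. Here $A_k=M_{\theta_1}\phi\cdots M_{\theta_{k-1}}\phi$ and $B_k=M_{\theta_k}\phi\cdots M_{\theta_n}\phi$. We use the substitution $x=\Psi^{-1}(y)$ to express these as vector fields $V_k(y)=\partial_{\theta_k}\Psi(\Psi^{-1}(y))$. Then $V_k=(A_k)_*(E)$, the pushforward of the Euler field $E(x)=x\partial_x$ under the diffeomorphism germ $A_k$. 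Independence of $\Phi_n$'s differential is equivalent to linear independence of $V_1,\ldots,V_n$. We will show that $(A_k)_*E$ and $(A_{k'})_*E$ differ at their first nonlinear order in a controlled way.

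The key step, and the main obstacle, is proving that $E,\ \phi_*E,\ (\phi M\phi)_*E,\ldots$ (with the $M_{\theta}$ inserted) are linearly independent. A pushforward of $E$ by a germ $\psi$ equals $E$ exactly when $\psi$ commutes with all $M_\theta$, that is, when $\psi$ is linear. So each $V_k-V_1$ is nonzero. For independence we will track the order-$m$ coefficient. Modulo higher terms, $\psi_*E=x\partial_x+c(\psi)x^m\partial_x+\ldots$, and $c$ is additive under composition with twisting factors: $c$ picks up $(m-1)d_m\lambda^{-1}$ times a power $(\theta\lambda)^{\pm(m-1)}$ depending on position. The coefficients for $V_k$ are then distinct-exponent monomials in $\theta_1,\ldots,\theta_n$, which differ by the generic factors $(\theta_j\lambda)^{m-1}$. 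A single coefficient only gives rank two, so we will also use the higher-order coefficients $x^{m+j(m-1)}$, which arise from iterated contributions of $d_m$. The plan is to show that the $n\times n$ matrix of these coefficients is Vandermonde-like in the quantities $\prod_{j<k}(\theta_j\lambda)^{m-1}$. That matrix is nonsingular for generic $\theta$, which gives rank $n$. If this bookkeeping fails, the fallback is an inductive argument. We would assume rank $n-1$ for $\Phi_{n-1}$ and show that $V_n\notin\mathrm{span}(V_1,\ldots,V_{n-1})$ by examining the leading-order term in $x$ near $0$ for $\theta_n\to0$ or $\theta_n\to\infty$, where that term blows up with a distinct power of $\theta_n$.
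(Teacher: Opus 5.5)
Your reduction is sound as far as it goes. Conjugating by $K_{N_0}$, turning non-commutation into non-linearity of $\phi$, and writing the right-trivialized tangent vectors as $V_k=\theta_k^{-1}(A_k)_*E$ are all correct (you dropped the harmless factor $\theta_k^{-1}$). The gap is the decisive independence step. You only announce it, and it cannot be proved from what you use, namely that $\phi$ is a nonlinear germ with $\phi'(0)=\lambda>0$.

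Take $\phi(x)=\lambda x/(1+x)$. Maps $x\mapsto ax/(1+bx)$ are closed under composition and include every $M_\theta$. So every $M_{\theta_1}\circ\phi\circ\cdots\circ M_{\theta_n}\circ\phi$ stays in this two-parameter family, and the rank is at most $2$ for all $n$. Equivalently, every $V_k$ lies in $\mathrm{span}\{x\partial_x,x^2\partial_x\}$. The coefficients of $x^{m+j(m-1)}$, $j\ge1$, that you wanted to assemble into a Vandermonde-like matrix therefore vanish identically: the ``iterated contributions of $d_m$'' are cancelled by those of the higher $d_j$. The same happens for any nonlinear $\phi$ in the group $\{x\mapsto ax(1+bx^{m-1})^{-1/(m-1)}\}$.

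Your fallback fails as well. $V_n$ depends on $\theta_n$ only through the scalar $\theta_n^{-1}$, so letting $\theta_n\to0$ or $\theta_n\to\infty$ cannot separate it from $\mathrm{span}(V_1,\ldots,V_{n-1})$. Importing the paper's Lemma \ref{lem:dmn} would not help, because it has the same defect. It assumes only $a_1\neq0$ and $a_r\neq0$, yet $f(t)=t/(1+t)$ with $n=3$ gives the two-parameter family $\theta_1\theta_2\theta_3t/(1+\theta_3(1+\theta_2)t)$. Its terms $B_jt^r$, $j\ge2$, all feed the single coefficient of $t^r$.

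What is missing is global information that comes from $h_N$ being a pgf. $h_N$ is an increasing analytic bijection of $[0,1)$ with $p_1(N)>0$, and $K_{N_0}$ maps $[0,1)$ analytically onto $[0,\infty)$. Hence $\phi$ is a real-analytic increasing bijection of $[0,\infty)$. Then $X=\phi_*E$ is real-analytic and positive on $(0,\infty)$, and its flow $s\mapsto\phi\circ M_{e^s}\circ\phi^{-1}$ exists for all times. Write $X-x\partial_x=\sum_{k\ge2}c_kx^k\partial_x$. If only finitely many $c_k$ were nonzero, $X$ would be a polynomial of degree $\ge2$ on all of $[0,\infty)$, and its flow would blow up in finite time. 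So infinitely many $c_k\neq0$. Consequently the family $\{(M_\theta)_*X=x\partial_x+\sum_k c_k\theta^{1-k}x^k\partial_x\}_{\theta>0}$ spans an infinite-dimensional space.

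With this, your framework closes by a short induction. $\mathrm{span}(V_1,\ldots,V_n)$ depends only on $\theta_1,\ldots,\theta_{n-1}$, while $V_{n+1}\propto(A_n)_*(M_{\theta_n})_*X$ with $(A_n)_*$ injective. So at a point where $V_1,\ldots,V_n$ are independent, some choice of $\theta_n$ gives $V_{n+1}\notin\mathrm{span}(V_1,\ldots,V_n)$, and the rank is $n$ for every $n$.
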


\begin{proof}[Proof of Theorem $\ref{thm:ult2}$]
Let $N_0 \in \mathcal{\mathring{N}}_1$, which is non-empty by Proposition $\ref{prop:tpr1}$.
According to Proposition $\ref{prop:ff1}$, there exists an open subset of $\mathcal{G}_{N_0}$ defined by
$\{ h_{\theta} \in \mathcal{H}_{N_0} \mid \theta \in (\theta_0-\epsilon,\theta_0+\epsilon)\ \}$
that is contained within $\mathcal{H}_\mathcal{N}$.

We proceed by contradiction to show that every element of $\mathcal{H}_\mathcal{N}$ must commute with $h_{N_{0}}$.
Suppose there exists some $h_{N} \in \mathcal{H}_\mathcal{N}$ that does not commute with $h_{N_0}$.
For each $n \in \mathbb{N}^{+}$, define the set of compositions:
$$ \mathcal{H}_{N_{0},N,n} = \{ h_{\theta_1,\ldots,\theta_n} =
h_{\theta_1} \circ h_{N} \circ h_{\theta_2} \circ h_{N}
\circ \dots \circ h_{\theta_n} \circ h_{N} \mid \theta_i \in (\theta_0-\epsilon,\theta_0+\epsilon)  \}.   $$
By construction, $\mathcal{H}_{N_{0},N,n}$ is an open subset of both $\mathcal{H}_\mathcal{N}$ and the set
$\mathcal{G}_{N_{0},N,n}$ introduced in Proposition \ref{prop:ff2}.
This proposition implies that this open subset possesses a parameter space of dimension $n$.
Since this construction holds for all $n \in \mathbb{N}^{+}$, the presence of both
$N_0$ and $N$ in $\mathcal{N}$ forces $\mathcal{H}_\mathcal{N}$ (and thus $\mathcal{N}$) to be infinite-dimensional,
contradicting our finite-dimensionality assumption.

Consequently, every element in $\mathcal{H}_\mathcal{N}$ must commute with $h_{N_0}$, implying
$\mathcal{N} \subset \mathcal{N}_{N_0}$. This allows the model $\mathcal{N}$ to be parametrized globally by
$\theta = p_1(N)$. Given that $\mathcal{N}$
is closed under pgf composition, connected, and has a non-empty interior, the model has to be as in
Corollary $\ref{cor:gdmd}$.
\end{proof}

\subsection{Properties of models closed under pgf composition}

Theorem \ref{thm:ult2} establishes that any finite-dimensional stopping model
closed under pgf composition, connected, and possessing a non-empty interior must consist of
a family of commuting random variables supported on the positive integers, all with a non-zero probability of equaling one.

The following result summarizes the structural properties of these models. These properties follow
directly from the previous characterization and the results for commuting models established in Section 3.
\begin{proposition}
Let $\mathcal{N}$
be a finite-dimensional stopping model closed under pgf composition, connected, and with a non-empty interior. If the probabilities
$p_i(N)$ are continuously differentiable for all $i$ and for all $N \in \mathcal{N}$, then:
\begin{enumerate}
\item{}
$p_1(N) > 0$ for all $N \in \mathcal{N}$.
\item{}
The model $\mathcal{N}$ admits a global one-dimensional parametrization $\theta=h_{N}^\prime(0)=p_1(N)$.
\item{}
The parameter space is $\mathcal{D}_\mathcal{N}=(0,\theta_{\mathcal{N}}]$,
where $0 < \theta_{\mathcal{N}} \le \theta_{K_{\mathcal{N}}} \le 1$ with the threshold $\theta_{K_{\mathcal{N}}}$ defined in Theorem \ref{thm:intp}.
If $N_I \in \mathcal{N}$, then $\theta_{K_{\mathcal{N}}}=1$ and $\mathcal{D}_\mathcal{N}=(0,1]$.
\item{}
For any $N_{\theta_1}, N_{\theta_2} \in \mathcal{N}$, their pgfs commute, $h_{N_{\theta_1}}\circ h_{N_{\theta_2}} =
h_{N_{\theta_2}}\circ h_{N_{\theta_1}} = h_{N_{\theta_1 \theta_2}} \in \mathcal{H}_\mathcal{N}$.
\item{}
For any $N_{\theta_1}, N_{\theta_2} \in \mathcal{N}$, their pgfs satisfy
$h_{N_{\theta_2}}\circ h_{N_{\theta_1}}^{-1} = g_{\theta_2/\theta_1} \in \mathcal{G}_\mathcal{N}$,
where $\mathcal{G}_\mathcal{N}$ is the set of non-constant analytic functions commuting with
the pgfs in $\mathcal{H}_\mathcal{N}$. Furthermore,
$$  \mathcal{G}_\mathcal{N} =
\{ h_{N_{\theta_1}} \circ h_{N_{\theta_2}}^{-1} \mid h_{N_{\theta_1}}, h_{N_{\theta_2}} \in \mathcal{H}_\mathcal{N}  \}.  $$
\item{}
For $i \ge 2$, the probability $p_i(N_\theta) = \theta(1-\theta)Q_i(\theta)$ is a polynomial in $\theta$
of degree at most $i$. In particular, $p_2(N_\theta) \propto \theta(1-\theta)$.
\item{}
The set of pgfs of the r.v.'s in $\mathcal{N}$ is characterized by the Koenigs function $K_{\mathcal{N}}(t)$:
$$ \mathcal{H}_\mathcal{N} = \{h_{N_\theta}(t) = K_{\mathcal{N}}^{-1}(\theta  K_{\mathcal{N}}(t)) \mid
\theta=h_{N_\theta}^\prime(0) \in (0,\theta_\mathcal{N}]    \}, $$
and the set $\mathcal{G}_\mathcal{N}$ is obtained likewise with the parameter space extending to $(0,\infty)$,
where:
$$ K_{\mathcal{N}}(t) = \lim_{n\rightarrow\infty}\frac{h_{N_\theta}^{\circ n}(t)}{h_{N_\theta}^{\prime}(0)^n} = \left. \frac{d h_{N_\theta}(t)}{d \theta} \right|_{\theta=0}, $$
for any $N_\theta \in \mathcal{N}$. This Koenigs function characterizes the model $\mathcal{N}$ up to its parameter space limit, $\theta_\mathcal{N}$,
and it also determines the corresponding maximal parameter space, $(0, \theta_{K_{\mathcal{N}}}]$.
\end{enumerate}
\label{prop:prpts}
\end{proposition}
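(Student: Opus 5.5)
The plan is to derive each item of Proposition \ref{prop:prpts} from Theorem \ref{thm:ult2}, combined with the Section 3 results applied to a fixed reference variable. First, Theorem \ref{thm:ult2} and Proposition \ref{prop:tpr1} give $\mathcal{N}=\mathcal{N}_1$, which is item 1. Next, pick any $N_0\in\mathring{\mathcal{N}}$ with $N_0\neq N_I$. Such an $N_0$ exists because the interior is an open set in dimension $\ge 1$ and so is not the single point $N_I$. Then $0<p_1(N_0)<1$: indeed $p_1(N_0)=1$ forces $N_0=N_I$, and $p_1(N_0)>0$ by item 1.

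By Theorem \ref{thm:ult2}, $\mathcal{N}\subset\mathcal{N}_{N_0}$. Corollary \ref{cor:parh} then yields the global parametrization $\theta=p_1(N)$ of item 2. Proposition \ref{prop:erpl1} shows the Koenigs function does not depend on the choice of $N_0$, so $K_{\mathcal{N}}:=K_{N_0}$ and $\mathcal{G}_{\mathcal{N}}:=\mathcal{G}_{N_0}$ are well defined. Item 4 is Proposition \ref{prop:com}(1) together with closure of $\mathcal{N}$. Item 5 follows from Proposition \ref{prop:com}(3). For the set identity in item 5, I would adapt the remark following Theorem \ref{thm:intp}: as $\theta_1,\theta_2$ range over $(0,\theta_{\mathcal{N}})$, the ratio $\theta_1/\theta_2$ covers $(0,\infty)$. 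Item 6 is Corollary \ref{col:pol}. For item 7, the displayed form of $\mathcal{H}_{\mathcal{N}}$ comes from Theorem \ref{thm:par} restricted to $\mathcal{D}_{\mathcal{N}}$. The identity $K_{\mathcal{N}}(t)=\partial_\theta h_{N_\theta}(t)|_{\theta=0}$ comes from the expansion $g_\theta(t)=\sum_n b_n K^n\theta^n$ in the proof of Theorem \ref{thm:exp}, using $b_1=1$, which holds since $K'(0)=1$.

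Item 3 is the delicate part. Corollary \ref{cor:gdmd} only identifies the interior as $\{\theta\in(0,\theta_{\mathcal{N}})\}$ with $\theta_{\mathcal{N}}\le\theta_{K_{N_0}}\le 1$. To pass to $\mathcal{D}_{\mathcal{N}}=(0,\theta_{\mathcal{N}}]$ I would use that $\mathcal{N}$ is a manifold possibly with boundary and connected. Under the identifiable parametrization, $\mathcal{D}_{\mathcal{N}}$ is then a connected subset of $(0,1]$ whose interior is $(0,\theta_{\mathcal{N}})$. Hence $\mathcal{D}_{\mathcal{N}}$ is either $(0,\theta_{\mathcal{N}})$ or $(0,\theta_{\mathcal{N}}]$, and the paper's convention of including the boundary selects the closed endpoint. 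I expect this to be the main obstacle: the argument leans on the "open set possibly with boundary" convention rather than on a genuine closure property. A supporting observation is that each $p_i(\theta)$ is a polynomial, so the limit as $\theta\uparrow\theta_{\mathcal{N}}$ is a valid pgf lying in $\mathcal{N}_{N_0}$. This guarantees the endpoint is admissible, not that it is included, so inclusion must rest on the convention.

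Finally, suppose $N_I\in\mathcal{N}$, i.e.\ $\theta=1\in\mathcal{D}_{\mathcal{N}}$. By connectedness $\mathcal{D}_{\mathcal{N}}$ is an interval containing $1$, so $\theta_{\mathcal{N}}=1$. Since $\theta_{\mathcal{N}}\le\theta_{K_{\mathcal{N}}}\le1$, this forces $\theta_{K_{\mathcal{N}}}=1$ and $\mathcal{D}_{\mathcal{N}}=(0,1]$, which agrees with Corollary \ref{cor:pfc}.
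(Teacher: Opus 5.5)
Your proposal follows the paper's route: the paper gives no separate proof beyond stating that the items follow directly from Theorem \ref{thm:ult2} and the Section 3 results (Theorem \ref{thm:par}, Corollaries \ref{cor:parh}, \ref{col:pol}, \ref{cor:gdmd}, Propositions \ref{prop:com}, \ref{prop:erpl1}), and your item-by-item assembly of these is correct. Your caveat on item 3 is justified and is not a defect of your argument: the geometric model restricted to $\theta\in(0,1/2)$ satisfies every hypothesis, so the hypotheses only give $\mathcal{D}_{\mathcal{N}}\in\{(0,\theta_{\mathcal{N}}),(0,\theta_{\mathcal{N}}]\}$ and the closed endpoint is a normalization. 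Your limit argument shows the endpoint is always admissible once you add that $g_{\theta_{\mathcal{N}}}(t)=K_{\mathcal{N}}^{-1}(\theta_{\mathcal{N}}K_{\mathcal{N}}(t))\to 1$ as $t\to 1^-$, so no probability mass is lost in the limit; this is also exactly the endpoint step that the proof of Theorem \ref{thm:intp} glosses over.
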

In the following section, we use the function $K_{\mathcal{N}}(t)$ to characterize stopping models
that are closed and have maximal parameter space $(0,1]$, including the identity $N_I$.

\section{Models closed under pgf composition that include $N_I=N_{\theta=1}$}

By Proposition \ref{prop:stst}, if a stopping model is closed under pgf composition and contains the identity $N_I$,
the corresponding randomly stopped sum and extreme transformations (Definitions \ref{def:rssm} and \ref{def:rsex})
act as statistically stable model extensions.
In this section, we characterize the subclass of stopping models that
are as in Section 4 and contain $N_I$.

To achieve this, we introduce a function $\varphi_{\mathcal{N}}(t)$, whose analytical properties are
established in the following result, proven in Appendix 3.

\begin{proposition}\label{prop:vphi1}
Let $\mathcal{N}$ be a finite-dimensional stopping model closed under pgf composition, connected and
with a non-empty interior. Define
$$ \varphi_{\mathcal{N}}(t) = 1 - \frac{K_{\mathcal{N}}(t)}{t K_{\mathcal{N}}^{\prime}(t)},    $$
and let $m$ denote the multiplicity of the vertical asymptote of $K_{\mathcal{N}}(t)$ at $t=1$,
$$   m = \lim_{t\rightarrow 1^{-}} \frac{\log K_{\mathcal{N}}(t)}{\log (1-t)}.   $$
Then:
(i) $\varphi_{\mathcal{N}}(t)$ is analytic in $[0,1)$,
(ii) $\varphi_{\mathcal{N}}(0)=0$ and $\lim_{t\rightarrow 1^{-}}\varphi_{\mathcal{N}}(t)=1$, and
(iii) $ \varphi_{\mathcal{N}}^\prime(1) =\lim_{t\rightarrow 1^{-}}\varphi_{\mathcal{N}}^\prime(t) = - \frac{1}{m}$, where $m \in [-1,0]$.
Furthermore, $K_{\mathcal{N}}(t)$ is uniquely determined by $\varphi_{\mathcal{N}}(t)$ through the relation:
$$ K_{\mathcal{N}}(t) = t \exp \left(\int_{0}^{t} \frac{\varphi_{\mathcal{N}}(s)}{s(1-\varphi_{\mathcal{N}}(s))} ds \right). $$
\end{proposition}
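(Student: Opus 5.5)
The plan is to derive every claim from the properties of the Koenigs function collected in Theorem \ref{thm:par} and Proposition \ref{prop:prpts}: $K_{\mathcal{N}}$ is analytic on $[0,1)$, $K_{\mathcal{N}}(0)=0$, $K_{\mathcal{N}}^{\prime}(0)=1$, its Taylor coefficients are non-negative, it satisfies $K_{\mathcal{N}}\circ h_{N_\theta}=\theta K_{\mathcal{N}}$, and $K_{\mathcal{N}}(t)\to\infty$ as $t\to 1^-$.

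For (i), I use that the coefficients of $K_{\mathcal{N}}$ are non-negative and $K^{\prime}_{\mathcal{N}}(0)=1$. Hence $K_{\mathcal{N}}^{\prime}(t)\ge 1>0$ and $K_{\mathcal{N}}(t)>0$ on $(0,1)$, so $tK^{\prime}_{\mathcal{N}}(t)$ does not vanish there. Near $t=0$ I write $K_{\mathcal{N}}(t)=t(1+c_2t+\dots)$ and $tK_{\mathcal{N}}^{\prime}(t)=t(1+2c_2t+\dots)$. The factor $t$ cancels, so $K_{\mathcal{N}}/(tK^{\prime}_{\mathcal{N}})$ is a quotient of analytic functions with non-vanishing denominator and equals $1$ at $t=0$. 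This gives analyticity of $\varphi_{\mathcal{N}}$ on $[0,1)$ together with $\varphi_{\mathcal{N}}(0)=0$.

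The reconstruction formula is then a direct computation, and I do it next because it organises the rest. From $1-\varphi_{\mathcal{N}}=K_{\mathcal{N}}/(tK_{\mathcal{N}}^{\prime})$ one gets
$$\frac{\varphi_{\mathcal{N}}(s)}{s(1-\varphi_{\mathcal{N}}(s))}=\frac{K_{\mathcal{N}}^{\prime}(s)}{K_{\mathcal{N}}(s)}-\frac1s=\frac{d}{ds}\log\frac{K_{\mathcal{N}}(s)}{s}.$$
The integrand is analytic at $0$ because $\varphi_{\mathcal{N}}(0)=0$. Integrating from $0$ to $t$ and using $K_{\mathcal{N}}(s)/s\to 1$ gives the stated expression. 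It also shows that $\varphi_{\mathcal{N}}$ determines $K_{\mathcal{N}}$ uniquely.

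For (ii) at $t\to1^-$ and for (iii), the key identity is $1-\varphi_{\mathcal{N}}(t)=\bigl(t\,(\log K_{\mathcal{N}})^{\prime}(t)\bigr)^{-1}$. So I must show $(\log K_{\mathcal{N}})^{\prime}\to\infty$, and more precisely that $(1-t)(\log K_{\mathcal{N}})^{\prime}(t)\to -m$. Once this holds, $1-\varphi_{\mathcal{N}}(t)\sim (1-t)/(-m)$, which gives the limit $1$ and the derivative $\varphi^{\prime}_{\mathcal{N}}(1)=-1/m$ by differentiating this asymptotic form.

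The main obstacle is proving existence of $\lim (1-t)(\log K_{\mathcal{N}})^{\prime}(t)$. The definition of $m$ only controls the ratio $\log K_{\mathcal{N}}/\log(1-t)$, and L'H\^opital runs in the wrong direction. My first attempt would exploit the Schr\"oder equation $K_{\mathcal{N}}(h_{N_\theta}(t))=\theta K_{\mathcal{N}}(t)$ near $t=1$. Writing $h_{N_\theta}(t)\approx 1-h_{N_\theta}^{\prime}(1)(1-t)$ (when the mean is finite), I would show that $\log K_{\mathcal{N}}(1-e^{-x})$ is asymptotically linear in $x$ with slope $\log\theta/\log h^{\prime}_{N_\theta}(1)$, which identifies $-m$. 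The fact that the slope does not depend on $\theta$ (using the whole one-parameter family and $h_{N_{\theta_1}}\circ h_{N_{\theta_2}}=h_{N_{\theta_1\theta_2}}$) should force genuine regular variation. Convexity of $\log K_{\mathcal{N}}(1-e^{-x})$ in $x$, which I expect from the non-negative coefficients, would then upgrade this to convergence of its derivative. The infinite-mean case, where $m=0$, would be handled separately, by showing $(1-t)(\log K_{\mathcal{N}})^{\prime}(t)\to0$ directly.
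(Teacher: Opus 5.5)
Your proofs of (i), of $\varphi_{\mathcal{N}}(0)=0$ and of the reconstruction formula coincide with the paper's, which integrates $y'/y=\varphi/(t(1-\varphi))$ for $y=K/t$; below write $K=K_{\mathcal{N}}$ and $\varphi=\varphi_{\mathcal{N}}$. For the behaviour at $t=1$ the paper does exactly what you rejected. It applies l'H\^opital twice to $\log K/\log(1-t)$ to get $m=\lim(1-KK''/(K')^2)^{-1}$, reads this as $\lim KK''/(K')^2=1-1/m$, and substitutes into $\varphi'=-1/t+K/(t^2K')+KK''/(t(K')^2)$. Your objection is correct: this presupposes the limits it claims to produce, as does the paper's ``$\log K\to\infty$, hence $(\log K)'\to\infty$''. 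So a Tauberian route through the Schr\"oder equation is worth having. Your regular-variation step is sound in outline, apart from a dropped sign: the slope is $-\log\theta/\log h_{N_\theta}'(1)=-m\ge 0$.

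\textbf{First gap: the convexity upgrade.} $L(x)=\log K(1-e^{-x})$ is not convex in general. For the geometric model, $K=t/(1-t)$ gives $L(x)=\log(e^x-1)$, which is strictly concave, and $K=-\log(1-t)$ gives $\log x$. Non-negative coefficients make $\log K(e^s)$ convex in $s=\log t$, not in $x=-\log(1-t)$. What they do give is that $K'$ and $K''$ are increasing, and a monotone-density argument only needs that. Set $u=1/(1-t)$ and $F(u)=K(1-1/u)=\int_1^u K'(1-1/v)\,v^{-2}\,dv$. Bound the increasing factor $K'(1-1/v)$ by its endpoint values in $F(\lambda u)-F(u)$ and in $F(u)-F(u/\lambda)$, integrate $v^{-2}$ exactly, and let $\lambda\downarrow 1$. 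Once $F$ is regularly varying of index $\kappa$, this yields $(1-t)K'(t)/K(t)\to\kappa=-m$.

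\textbf{Second gap: the final differentiation.} The relation $1-\varphi(t)\sim(1-t)/(-m)$ only gives the left difference quotient $(1-\varphi(t))/(1-t)\to-1/m$. Asymptotic equivalences cannot be differentiated. The statement asserts $\lim_{t\to1^-}\varphi'(t)=-1/m$, and by the formula for $\varphi'$ this needs convergence of $KK''/(K')^2$, which is second-order information. For $\kappa>0$ a second pass closes it. From the first pass, $K'(1-1/u)\sim\kappa\, uF(u)$ is regularly varying of index $\kappa+1$. The same sandwich applied to $K'(1-1/u)=1+\int_1^u K''(1-1/v)\,v^{-2}\,dv$ then gives $(1-t)K''/K'\to\kappa+1$. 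Hence $KK''/(K')^2\to(\kappa+1)/\kappa=1-1/m$, which is exactly the limit the paper needs.

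\textbf{Remaining points.}
\begin{itemize}
\item The case $m=0$ is still open in your sketch. There one must show $\varphi'\to+\infty$, and $(1-t)K'/K\to 0$ alone does not give it.
\item Your derivation of $\varphi\to 1$ also breaks down when $m=0$. That limit has an elementary proof independent of $m$: $K(t)\le K(s)+(t-s)K'(t)$ and $K'(t)\to\infty$ give $\limsup_{t\to1^-}K/K'\le 1-s$ for every $s<1$.
\item Neither your plan nor the paper's proof establishes $m\in[-1,0]$. The bound $m\le 0$ is immediate from $K\to\infty$. The bound $m\ge -1$ appears only later, in the Corollary following Theorem \ref{thm:nnn}, and only for models containing $N_I$; there $\varphi$ is a pgf, so $\varphi'(1)\ge 1$.
\end{itemize}
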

The following theorem provides necessary and sufficient conditions for
a stopping model closed under pgf composition to admit the parameter space $(0,1]$ for $\theta$,
thereby including $N_I$.
\begin{theorem}
Let $\mathcal{N}$ be a finite-dimensional stopping model closed under pgf composition, connected, with
a non-empty interior, and with continuously differentiable $p_i(N)=\Pr(N=i)$. The model can include $N_I$
if and only if
$\varphi_{\mathcal{N}}(t)$ is itself the pgf of a r.v., with $\varphi_{\mathcal{N}}(0)=0$.

Consequently, there exists a bijective correspondence between this class of stopping models
and the set of probability distributions supported on the positive integers.
 \label{thm:nnn}
\end{theorem}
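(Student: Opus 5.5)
The plan is to read the commuting family $\{h_{N_\theta}\}$ as a one-parameter semigroup in the variable $s=-\log\theta$, and to recognise $\varphi_{\mathcal{N}}$ as the offspring generating function of a continuous-time Markov branching process. The basic identity comes from differentiating $K_{\mathcal{N}}(h_{N_\theta}(t))=\theta K_{\mathcal{N}}(t)$ (Proposition \ref{prop:prpts}, item 7) with respect to $\theta$:
$$\theta\,\frac{\partial h_{N_\theta}(t)}{\partial\theta}=\frac{K_{\mathcal{N}}(h_{N_\theta}(t))}{K_{\mathcal{N}}^{\prime}(h_{N_\theta}(t))}=h_{N_\theta}(t)\bigl(1-\varphi_{\mathcal{N}}(h_{N_\theta}(t))\bigr).$$
Setting $F_s(t)=h_{N_{e^{-s}}}(t)$ and $f(t)=t\varphi_{\mathcal{N}}(t)$, this is the Kolmogorov backward equation
$$\frac{\partial F_s}{\partial s}=f(F_s)-F_s,\qquad F_0(t)=t.$$
By Theorem \ref{thm:par} the functions $g_\theta$ are defined for all $\theta>0$. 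By Proposition \ref{prop:prpts}, item 3, including $N_I$ is equivalent to $F_s$ being a pgf for every $s\ge 0$, i.e. to $\mathcal{D}_{\mathcal{N}}=(0,1]$.

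\emph{Necessity.} Assume $h_{N_\theta}$ is a pgf for every $\theta\in(0,1]$. I would form $(h_{N_\theta}(t)-t)/(1-\theta)$ and look at its Taylor coefficients. The coefficient of $t$ is $-1$. For $i\ge 2$ the coefficient is $p_i(\theta)/(1-\theta)=\theta Q_i(\theta)\ge 0$, using Proposition \ref{prop:prpts}, item 6.
\begin{itemize}
\item Because each $p_i$ is a polynomial in $\theta$, the limit $\theta\to1^-$ can be taken coefficientwise.
\item The limit equals $-\partial_\theta h_{N_\theta}|_{\theta=1}=-t(1-\varphi_{\mathcal{N}}(t))$.
\item Hence $t\varphi_{\mathcal{N}}(t)$ has coefficients $\lim_{\theta\to1^-}\theta Q_i(\theta)\ge 0$, and the coefficient of $t$ vanishes, so $\varphi_{\mathcal{N}}(0)=0$.
\end{itemize}
Proposition \ref{prop:vphi1} gives analyticity on $[0,1)$ and $\varphi_{\mathcal{N}}(1^-)=1$. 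By Abel's theorem, $\varphi_{\mathcal{N}}$ is then the pgf of a r.v. supported on $\{1,2,\dots\}$.

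\emph{Sufficiency.} Assume $\varphi_{\mathcal{N}}$ is a pgf with $\varphi_{\mathcal{N}}(0)=0$. Then $f(t)=t\varphi_{\mathcal{N}}(t)$ is a pgf putting all its mass on $\{2,3,\dots\}$. Consider the Markov branching process with unit splitting rate and offspring pgf $f$. By classical theory (Harris), its transition pgfs $P_s(t)$ satisfy the same backward equation with $P_s(0)=0$, and they have non-negative coefficients. The equation $\dot x=f(x)-x$ has a locally Lipschitz right-hand side on $[0,1)$, so for each fixed $t\in[0,1)$ it has a unique solution. Hence $P_s(t)=F_s(t)=h_{N_{e^{-s}}}(t)$ on $[0,1)$.

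It remains to check $F_s(1^-)=1$, i.e. that the process does not explode. Here I would use the model itself rather than Harris's non-explosion criterion: $K_{\mathcal{N}}(t)\to\infty$ as $t\to1^-$ forces $K_{\mathcal{N}}^{-1}(u)\to1$ as $u\to\infty$, so $h_{N_\theta}(1^-)=1$. Therefore every $h_{N_\theta}$ with $\theta\in(0,1]$ is a genuine pgf of a positive r.v., and by maximality (Theorem \ref{thm:intp}) the model can be taken with parameter space $(0,1]\ni 1$, i.e. containing $N_I$.

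\emph{Bijection.} In one direction, the map $\mathcal{N}\mapsto\varphi_{\mathcal{N}}$ is injective by the integral formula in Proposition \ref{prop:vphi1}. For surjectivity, start from an arbitrary pgf $\varphi$ on the positive integers, define $K$ by that formula, and verify that $K$ is a legitimate Koenigs function: $K(0)=0$, $K^{\prime}(0)=1$, $K$ is increasing and analytic on $[0,1)$, and $K(t)\to\infty$ as $t\to1^-$. Divergence at $1$ should follow because $\varphi(s)\to1$ makes $\int^1 \varphi/(s(1-\varphi))\,ds$ diverge, since $1-\varphi(s)=O(1-s)$ when $\varphi^{\prime}(1)<\infty$. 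When $\varphi$ has infinite mean, a separate monotonicity argument is needed.

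I expect the main obstacle to be this surjectivity step, together with the infinite-mean cases. The sufficiency argument above then yields a closed, connected model with parameter space $(0,1]$ whose $\varphi_{\mathcal{N}}$ is the given $\varphi$.
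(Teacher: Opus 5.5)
Your argument for the equivalence itself is sound. Necessity is the paper's forward implication in different clothing. Both arguments take a one-sided difference quotient of the Taylor coefficients at $\theta=1$ and read off $d_{n-1}\ge 0$. You use $(h_{N_\theta}(t)-t)/(1-\theta)$, and the paper uses the coefficients $c_n(\theta)$ of $h_{N_\theta}(t)/\theta$.

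For sufficiency the paper stays elementary. By induction on $n$, the $t^n$ coefficient of $\sum_{i\ge 2} d_{i-1}\,g_\theta(t)^i$ involves only $c_k(\theta)$ with $k<n$. Hence $\partial_\theta c_n\le 0$ on $(0,1]$, and $c_n(1)=0$ gives $c_n\ge 0$. You instead identify $s\mapsto h_{N_{e^{-s}}}$ with the transition semigroup of a Markov branching process, via uniqueness for $\dot x=f(x)-x$, and import nonnegativity from Harris. The paper's induction is essentially the proof of the fact you import. What your framing buys is that it makes the Karlin--McGregor embedding explicit. It also forces you to check $h_{N_\theta}(1^-)=1$, which you correctly get from $K_{\mathcal{N}}(1^-)=\infty$ and which the paper leaves implicit.

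The genuine gap is the surjectivity step, and no extra monotonicity argument can close it, because the claim is false for some infinite-mean laws.
\begin{itemize}
\item Since $1-\varphi_{\mathcal{N}}=K_{\mathcal{N}}/(tK_{\mathcal{N}}^{\prime})$, the $K$ built from a pgf $\varphi$ by the integral formula satisfies $K^{\prime}/K=1/(t(1-\varphi(t)))$. So $K(1^-)=\infty$ if and only if $\int^{1}ds/(1-\varphi(s))=\infty$. This is exactly the classical non-explosion criterion for the branching process with offspring pgf $t\varphi(t)$ that you introduced.
\item Take the Sibuya pgf $\varphi(s)=1-(1-s)^{\alpha}$ with $0<\alpha<1$, which is the law of the stopping variables of Example 1 with $\theta=\alpha$. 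Then $\int_0^1\varphi(s)/(s(1-\varphi(s)))\,ds<\infty$, so $K$ is bounded, and $g_\theta(1^-)=K^{-1}(\theta K(1^-))<1$ for every $\theta<1$.
\item These $g_\theta$ still have nonnegative coefficients; both your Harris argument and the paper's induction apply. But they are the defective transition pgfs of an explosive process, not pgfs of positive integer-valued variables.
\item No model in the class can have this $\varphi_{\mathcal{N}}$ either. A genuine Koenigs function always has $K(1^-)=\infty$: the relation $K\circ h_{N_0}=p_1(N_0)K$ with $h_{N_0}(1)=1$ forces $K(1^-)=p_1(N_0)K(1^-)$. With $0<p_1(N_0)<1$ and $K>0$ on $(0,1)$, that leaves only $K(1^-)=\infty$.
\end{itemize}

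So the correspondence is a bijection only onto the pgfs on the positive integers with $\int^{1}ds/(1-\varphi(s))=\infty$. That set contains all finite-mean laws, by your $O(1-s)$ bound. It also contains some infinite-mean ones, such as the $\varphi_{\mathcal{N}}$ of Example 1. The fix is to restrict the target set, not to look for a further argument. The paper's own proof has the same hole: it asserts that the integral formula yields a model satisfying the hypotheses without ever checking $K(1^-)=\infty$.
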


\begin{proof}
By Theorem \ref{thm:ult2}, the stopping model $\mathcal{N}$ must consist of a family of r.v.'s that commute,
with parameter space $\mathcal{D}_{\mathcal{N}} = (0,1]$  under the $\theta$ parametrization. For this to hold, each function
$g_{\theta} = K_{\mathcal{N}}^{-1}(\theta  K_{\mathcal{N}}(t)) \in \mathcal{G}_{\mathcal{N}}$ with $\theta \in (0,1]$
must be a valid pgf.

Define the function:
$$ K_{\theta}(t) = g_{\theta}(t)/\theta = t + \sum_{n=2}^\infty c_n(\theta) t^n. $$
Using L'Hopital's rule and the fact that ${K_{\mathcal{N}}^{-1}}^{\prime}(0)=1/K_{\mathcal{N}}^\prime(0)=1$, we observe that:
$$ \lim_{\theta\rightarrow0+}K_{\theta}(t) =
\lim_{\theta\rightarrow0+} \frac{K_{\mathcal{N}}^{-1}(\theta  K_{\mathcal{N}}(t))}{\theta} =
\lim_{\theta\rightarrow0+} {K_{\mathcal{N}}^{-1}}^{\prime}(\theta  K_{\mathcal{N}}(t)) K_{\mathcal{N}}(t)  = K_{\mathcal{N}}(t), $$
Moreover, $\lim_{\theta\rightarrow1-}K_{\theta}(t) = t,$ implying that $c_n(1)=0$ for all $n \ge 2$.
Since $K_{\mathcal{N}}(g_{\theta}(t))= \theta K_{\mathcal{N}}(t)$, differentiating with respect to $\theta$ yields:
$$ K_{\mathcal{N}}^{\prime}(g_{\theta}(t)) \frac{\partial}{\partial\theta}g_{\theta}(t) = K_{\mathcal{N}}(t) =
\frac{K_{\mathcal{N}}(g_{\theta}(t))}{\theta}.  $$
Rearranging for the partial derivative, we find:
$$ \frac{\partial}{\partial\theta}g_{\theta}(t) = \frac{K_{\mathcal{N}}(g_{\theta}(t))}{\theta K_{\mathcal{N}}^{\prime}(g_{\theta}(t))}.   $$
Substituting this into the derivative of $K_{\theta}(t)$ gives:
$$ \frac{\partial}{\partial\theta}K_{\theta}(t) =
\sum_{n=2}^\infty \frac{\partial c_n(\theta)}{\partial\theta} t^n =
\frac{\frac{\partial}{\partial\theta}g_{\theta}(t)}{\theta} - \frac{g_{\theta}(t)}{\theta^2} = $$
$$ \frac{1}{\theta^2}\left(\frac{K_{\mathcal{N}}(t)}{K_{\mathcal{N}}^{\prime}(t)}-t\right)\circ g_{\theta}(t) =
- \frac{g_\theta(t) \varphi_{\mathcal{N}}(g_\theta(t))}{\theta^2}.  $$
Since $g_{\theta=1}(t) = t$,
by Theorem \ref{thm:exp} the coefficients $c_n(\theta)$ for $n>1$ are polynomials of degree $n-1$
satisfying $c_n(1)=0$. If the coefficients
$c_n(\theta)$ are decreasing in $\theta$ on $(0,1]$, then $c_n(1)=0$ implies $c_n(\theta) \ge 0$ on this interval,
ensuring $g_{\theta}\left(t\right)$ is a valid pgf for all $\theta \in (0,1)$.

Thus, the proof reduces to showing that $c_n(\theta)$
is decreasing for all $n$ if and only if all the coefficients $d_i$ in the power series expansion of:
$$ \varphi_{\mathcal{N}}(t) = 1 - \frac{K_{\mathcal{N}}(t)}{t K_{\mathcal{N}}^{\prime}(t)} = \sum_{i=1}^\infty d_i t^i    $$
are non-negative and so that $\varphi_{\mathcal{N}}(t)$ is a valid pgf with $\varphi_{\mathcal{N}}(0)=0$.

Forward implication: Assume $c_n(\theta) \ge 0$ for all $n$ and $\theta \in (0,1)$.
Since $c_n(1)=0$, continuity implies that:
$$ \frac{\partial c_n(\theta)}{\partial\theta}_{|\theta=1} \le 0. $$
From our derivative expression,
$$ \frac{\partial c_n(\theta)}{\partial\theta}_{|\theta=1}  =
\left[\frac{K_{\mathcal{N}}(t)}{K_{\mathcal{N}}^{\prime}(t)}-t\right]_n = \left[ - t \varphi_{\mathcal{N}}(t) \right]_n = -d_{n-1}. $$
Therefore, $-d_{n-1} \le 0$ for all $n > 1$ implies $d_{i} \ge 0$ for all $i \ge 1$.

Converse implication: Assume that $d_i \ge 0$ for all $i \ge 1$.
We proceed by induction on $n$.
The base case is trivial as $c_1(\theta) = 1 > 0$.
Assume that
$c_k(\theta) \ge 0$ for all $k < n$.
The $n$-th coefficient of the derivative is:
$$ \frac{\partial c_n(\theta)}{\partial\theta} = -\frac{1}{\theta^2}\left[\left(t-\frac{K_{\mathcal{N}}(t)}{K_{\mathcal{N}}^{\prime}(t)}\right)\circ g_{\theta}(t)\right]_n =
-\frac{1}{\theta^2}\left[\sum_{i=2}^\infty d_{i-1} g_\theta(t)^i \right]_n.   $$
The coefficient of $t^n$ in the expansion of $g_\theta(t)^i$ for $i \ge 2$
depends only on the coefficients $c_k(\theta)$ for $k<n$; the first term in which $c_n(\theta)$ could appear is the
contribution of $d_1 g_\theta(t)^2$ producing the term $2 d_1 c_1(\theta) c_n(\theta) t^{n+1}$,
which does not affect the coefficient of $t^n$.

Since all $d_i$ and all $c_k(\theta) \ge 0$ for $k < n$ are non-negative because of the induction hypotheses,
the entire bracketed expression is non-negative. Therefore:
$$ \frac{\partial c_n(\theta)}{\partial\theta} =
\frac{-1}{\theta^2}\left[\sum_{i=2}^\infty d_{i-1} g_\theta(t)^i \right]_n \le 0,   $$
confirming that $c_n(\theta)$ is decreasing on $(0,1)$. Continuity and $c_n(1)=0$ then
imply $c_n(\theta) \ge 0$ for all $\theta \in (0,1)$.

Finally, to establish the duality between these models and positive integer-valued distributions, note that
$h(t) = \varphi_{\mathcal{N}}(t)$ satisfies the conditions to be such a pgf (see Remark 1).
Conversely, let $h(t)$ be the such a pgf. Proposition \ref{prop:vphi1} provides a way to construct a stopping model $\mathcal{N}_h$
satisfying our conditions. This model is the family of r.v.'s whose pgfs belong to the set
$\mathcal{H}_{\mathcal{N}_h}=\{ h_{N_\theta}(t)=K_{\mathcal{N}_h}^{-1}(\theta K_{\mathcal{N}_h}(t))|\theta\in(0,1] \},$ where:
$$  K_{\mathcal{N}_h}(t) = t \exp \left(\int_{0}^{t} \frac{h(s)}{s(1-h(s))} ds \right).  $$
\end{proof}

To illustrate how a stopping model arises from a distribution on the
positive integers, consider the pgf $h(t)=(t+t^2)/2$. Applying the recovery formula for $K_{\mathcal{N}}(t)$, we find
a stopping model closed under pgf composition with parameter space $(0,1]$ through the Koenigs function:
$$ K_{\mathcal{N}_h}(t) = \frac{t}{(2+t)^{1/3} (1-t)^{2/3}}.   $$
Theorem \ref{thm:nnn} implies that for a stopping model $\mathcal{N}$
to admit $(0,1]$ as its parameter space, the expected value of the random variable associated with
$\varphi_{\mathcal{N}}(t)$, given by $\varphi_{\mathcal{N}}^\prime(1)=-1/m$, must be mot smaller than one.
This observation leads to a necessary condition on the index $m$:
\begin{corollary}
For any stopping model $\mathcal{N}$ satisfying the conditions of Theorem \ref{thm:nnn}, the index $m$ must satisfy:
$$   m = \lim_{t\rightarrow 1^{-}} \frac{\log K_{\mathcal{N}}(t)}{\log (1-t)} \in [-1,0].   $$
The limiting case $m=-1$ corresponds uniquely to the geometric model, where $\varphi_{\mathcal{N}}(t)=t$.
\end{corollary}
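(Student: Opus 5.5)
The plan is to read everything off the function $\varphi_{\mathcal{N}}$, using Theorem \ref{thm:nnn} together with Proposition \ref{prop:vphi1}. The argument runs in four steps.

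First, let $\mathcal{N}$ satisfy the hypotheses of Theorem \ref{thm:nnn} and contain $N_I$. By that theorem, $\varphi_{\mathcal{N}}(t)=\sum_{i\ge1} d_i t^i$ with $d_i\ge 0$ is the pgf of a positive integer-valued random variable, say $M$. Then
\[
\varphi_{\mathcal{N}}'(1)=\lim_{t\to1^-}\varphi_{\mathcal{N}}'(t)=\mathbb{E}(M)=\sum_{i\ge1} i\,d_i .
\]
Since $M\ge 1$ almost surely and $\sum_i d_i=\varphi_{\mathcal{N}}(1^-)=1$ by Proposition \ref{prop:vphi1}(ii), we get $\mathbb{E}(M)\ge 1$, possibly $+\infty$.

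Second, I relate this expectation to $m$. Proposition \ref{prop:vphi1}(iii) gives $\varphi_{\mathcal{N}}'(1)=-1/m$, so $-1/m\ge 1$. The sign of $m$ comes from $K_{\mathcal{N}}(t)\to\infty$ as $t\to1^-$ (Theorem \ref{thm:par}), which forces $\log K_{\mathcal{N}}(t)/\log(1-t)\le 0$ eventually, hence $m\le 0$. Combining, $-1/m\ge1$ gives $m\ge -1$, so $m\in[-1,0]$. The endpoint $m=0$ corresponds to $\mathbb{E}(M)=\infty$, which is allowed.

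To justify the identity $\varphi'(1)=-1/m$ directly rather than only quoting it, I would compute from the recovery formula:
\[
\frac{d}{dt}\log K_{\mathcal{N}}(t)=\frac{1}{t(1-\varphi_{\mathcal{N}}(t))}.
\]
Near $t=1$ we have $1-\varphi_{\mathcal{N}}(t)\sim \varphi_{\mathcal{N}}'(1)(1-t)$, so integrating gives
\[
\log K_{\mathcal{N}}(t)\sim -\frac{1}{\varphi_{\mathcal{N}}'(1)}\log(1-t),
\]
hence $m=-1/\varphi_{\mathcal{N}}'(1)$. This is an L'H\^opital computation on $\log K/\log(1-t)$. The only delicate point is the case $\varphi_{\mathcal{N}}'(1)=\infty$, where the same argument yields $m=0$.

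Third, the extremal case. We have $m=-1$ if and only if $\mathbb{E}(M)=1$, and for a random variable supported on $\{1,2,\dots\}$ this holds if and only if $M\equiv1$, that is $\varphi_{\mathcal{N}}(t)=t$. In that case the recovery formula gives
\[
K_{\mathcal{N}}(t)=t\exp\!\left(\int_0^t \frac{ds}{1-s}\right)=\frac{t}{1-t}.
\]
Then $K_{\mathcal{N}}^{-1}(u)=u/(1+u)$, and $h_\theta(t)=K_{\mathcal{N}}^{-1}(\theta t/(1-t))=\theta t/(1-(1-\theta)t)$, which is the geometric pgf. Uniqueness follows from the bijection in Theorem \ref{thm:nnn}: $\varphi_{\mathcal{N}}$ determines $K_{\mathcal{N}}$ and therefore the model.

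I expect the main obstacle to be the asymptotic step linking $m$ to $\varphi_{\mathcal{N}}'(1)$ when $\mathbb{E}(M)$ is infinite or the convergence at $1$ is slow. Relying on Proposition \ref{prop:vphi1}(iii) as stated avoids this. If it has to be proved, I would bound the ratio $(1-\varphi_{\mathcal{N}}(s))/(1-s)$ above and below by $\mathbb{E}(M)\pm\epsilon$ on $(1-\delta,1)$ and integrate.
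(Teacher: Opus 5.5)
Your proposal is correct and follows the paper's own argument. By Theorem \ref{thm:nnn}, $\varphi_{\mathcal{N}}$ is a pgf on the positive integers, so its mean $\varphi_{\mathcal{N}}'(1)=-1/m$ is at least one (infinite mean giving $m=0$), with equality exactly when $\varphi_{\mathcal{N}}(t)=t$, that is, $K_{\mathcal{N}}(t)=t/(1-t)$ and the geometric model. Your direct check of $\varphi_{\mathcal{N}}'(1)=-1/m$ via $K_{\mathcal{N}}'/K_{\mathcal{N}}=1/(t(1-\varphi_{\mathcal{N}}))$ and a single L'H\^opital step, using that $(1-\varphi_{\mathcal{N}}(t))/(1-t)\uparrow\mathbb{E}(M)$ for a pgf, is a good addition: it also shows that the limit defining $m$ exists, which the double L'H\^opital argument in Proposition \ref{prop:vphi1} leaves implicit.
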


\begin{remark}
Theorem \ref{thm:nnn}, characterizing stopping models closed and admiting the
maximal $(0,1]$ parameter space, can be viewed as a reformulation of classical embeddability results for discrete-time branching
processes into continuous-time, as established in Karlin and McGregor (1968a, 1968b).
%
Specifically, requiring a stopping model $\mathcal{N}$ to satisfy the conditions of Theorem \ref{thm:nnn} is equivalent
to imposing that for every $N \in \mathcal{N}$
and every positive integer $n$, there exists an $N_{n} \in \mathcal{N}$ such that $h_{N}=h_{N_{n}}^{\circ n}$.
In other words, every pgf in the family must admit an $n$-fold compositional root that is also in the family.
\end{remark}

\section{Examples of stopping models closed under pgf composition}

Theorem \ref{thm:ult2} establishes that finite-dimensional stopping models
that are closed under pgf composition,
connected and with a non-empty interior must consist of families of commuting
positive integer-valued r.v.'s that include one in their support.
Such families are necessarily uniparametric. Depending on their Koenigs function,
the parameter space may be connected or disconnected.

In this section we present examples of stopping models closed and connected. We distinguish the
ones that admit a maximal parameter space including $N_I$ from the ones that do not.

\subsection{Stopping models closed, connected and including $N_I$}

This subsection presents stopping models and families of stopping models
whose Koenigs functions satisfy the conditions of Theorem \ref{thm:nnn}, thus admitting
a maximal parameter space $\theta \in (0,1]$.

By Proposition \ref{prop:stst}, all stopping models presented here
make the randomly stopped sum and extreme transformations in Definitions \ref{def:rssm} and \ref{def:rsex},
into statistically stable extensions. In other words, applying any of these transformations twice in succession to a
statistical model leaves the transformed model unchanged.
These examples serve as counter-examples to the conjecture by Marshall and Olkin (1997),
which suggests that geometric is the only model with this property.

\begin{example}
The family of r.v.'s given by
$$ \mathcal{N} = \left\{ N_\theta \mid h_{N_\theta}(t) = 1 - (1 - t)^\theta, ~~ \theta \in (0,1]  \right\}, $$
is closed under pgf composition and satisfies $E[N_\theta]=\infty$.
The Koenigs function of the model is:
$$ K_{\mathcal{N}}(t) = - \log{(1-t)}.  $$
Here, $m=0$, and
$$ \varphi_{\mathcal{N}}(t) = 1 + \frac{1}{t} (1-t) \log (1-t).  $$
\end{example}

\begin{example}
The zero truncated geometric model:
$$ \mathcal{N} = \left\{ N_\theta \mid h_{N_\theta}(t) = \frac{\theta t}{1-(1-\theta) t}, ~~ \theta \in (0,1] \right\},  $$
is closed under pgf composition, with a Koenigs function:
$$ K_{\mathcal{N}}(t) =
\frac{t}{1-t}, $$
corresponding to the unique case where $m=-1$, and $\varphi_{\mathcal{N}}(t) = t$.
\end{example}

\begin{example}
For $\alpha \in (0,1)$, the family $\mathcal{N}_\alpha$ given by:
$$ \mathcal{N}_\alpha = \left\{ N_\theta^\alpha \mid h_{N_\theta^\alpha}(t) =
1 - \frac{1-t}{(\theta+(1-\theta)(1-t)^\alpha)^{1/\alpha}}, ~~ \theta \in (0,1]  \right\}, $$
forms a stopping model closed under pgf composition with $E[N_\theta^\alpha]=\theta^{-1/\alpha}$
and $V[N_\theta^\alpha]=\infty$.
The Koenigs function is:
$$ K_{\mathcal{N}_\alpha}(t) = \frac{1}{\alpha} \left(\frac{1}{(1-t)^\alpha} - 1\right) =
t + \sum_{k=2}^{\infty} \frac{\prod_{i=1}^{k-1}(\alpha+i)}{k!} t^k.   $$
In this case $m=-\alpha$ and
$$  \varphi_{\mathcal{N}_\alpha}(t) = 1 - \frac{1-(1-t)^{\alpha}}{\alpha t}(1-t),   $$
which is a pgf if and only if $\alpha \in [0,1]$. Examples 1 and 2 arise as limiting cases
when $\alpha \rightarrow 0$ and $\alpha \rightarrow 1$ respectively.
\end{example}

\begin{example}
For $\alpha \in (0,1)$, consider the Koenigs function:
$$  K_{\mathcal{N}_\alpha}(t) = \frac{t}{(1-t)^\alpha} = \sum_{n=1}^{\infty} \frac{\Gamma(\alpha+n-1)}{(n-1)! \Gamma(\alpha)} t^n.  $$
For each $\alpha$, the family of r.v.'s given by:
$$ \mathcal{N}_\alpha = \left\{ N_\theta^\alpha \mid h_{N_\theta^\alpha}(t) =
K_{\mathcal{N}_\alpha}^{-1}(\theta  K_{\mathcal{N}_\alpha}(t)), ~~ \theta \in (0,1]  \right\}, $$
is a stopping model closed under pgf composition with finite moments.
Although the inverse of the Koenigs function lacks a simple
closed form expression, $h_{N_\theta^\alpha}$ can be evaluated numerically via the series representation:
$$ K_{\mathcal{N}_{\alpha}}^{-1}(t) = \sum_{n=1}^{\infty}\frac{(-1)^{n+1} \alpha \Gamma(\alpha n)}{(n-1)!\Gamma(\alpha n-n+2)} t^n. $$
Here $m=-\alpha$ and
$$  \varphi_{\mathcal{N}_\alpha}(t) = \frac{\alpha t}{1-(1-\alpha)t},  $$
which is a pgf if and only if $\alpha \in (0,1]$.
Example 2 arises as a limiting case when $\alpha=1$.
\end{example}

\subsection{Stopping models closed and connected that can not include $N_I$}

The following families of models, first introduced and motivated in Valero and Ginebra (2025),
provide examples of stopping models closed under pgf composition with
a maximal parameter space $\theta \in (0,\theta_{\mathcal{N}}]$, where $\theta_{\mathcal{N}} \le \theta_{K_N} < 1$.
These families of models are constructed by composing the pgfs of the geometric model with the pgfs of two specific discrete distributions,
using the mechanism described in the Proposition 3 of that paper.

Unlike the models in Subsection 6.1, the ones considered here can only include the identity $N_I$
as an isolated point.
Consequently, the connected version of the stopping model does not make randomly stopped sum or extreme transformations
into statistically stable extensions.
Rather than leaving the transformed statistical model unchanged, applying these transformations
twice in succession results in a ``contraction" of the transformed model.

On the other hand,
under both the stopping models in Subsection 6.1 as well as those presented here,
the four statistical model extensions introduced in Definition \ref{def:rscb}
collapse into only two distinct model extensions, that are statistically stable. These new model extensions
pair randomly stopped maxima or minima with their respective inverses, they introduce a single additional parameter
to the initial model, and they subsume the
randomly stopped extreme transformations in Definition \ref{def:rsex}.
These structural properties are direct consequences of Proposition \ref{prop:prpts}.

\begin{example}
For $\alpha$ in $(0,\infty)$, the family $\mathcal{N}_\alpha$ given by:
$$ \mathcal{N}_\alpha = \left\{ N_{\theta}^{\alpha}  \mid h_{N_{\theta}^\alpha}(t) =
\frac{1}{\alpha} \ln\left(1 + \frac{(e^{\alpha t} -1)(e^{\alpha} - 1)}
{(\theta^{-1} -1)(e^{\alpha} - e^{\alpha t}) + e^{\alpha} -1}\right), ~~ \theta \in (0,\theta_{\mathcal{N}_\alpha}=e^{-\alpha}] \right\}, $$
is a connected stopping model closed under pgf composition that can only include $N_I$ as an isolated point.
The Koenigs function is:
$$ K_{\mathcal{N}_\alpha}(t) =
\frac{1}{p_1^{ZTP}} \frac{h_{N_{ZTP}}(t)}{1-h_{N_{ZTP}}(t)} =
\frac{e^{\alpha}-1}{\alpha} \frac{e^{\alpha t}-1}{e^{\alpha}-e^{\alpha t}},$$
with $p_1^{ZTP}$ and $h_{N_{ZTP}}(t)$ denoting the probability of value one and the pgf of the zero truncated poisson distribution.
\end{example}

\begin{example}
For $\alpha > 0$ and $\beta > 1$, the family $\mathcal{N}_{\alpha,\beta}$, given by:
$$ \mathcal{N}_{\alpha,\beta} = \left\{ N_{\theta}^{\alpha,\beta}  \mid h_{N_{\theta}^{\alpha,\beta}}(t) =
 \frac{1-\left(\frac{(1 - \theta^{-1} {{\rm e}^{\alpha}})
(1-t+t{{\rm e}^{-\frac{\alpha}{\beta}}})^{\beta}+ \theta^{-1} - 1}
{({{\rm e}^{\alpha}}(1-\theta^{-1}))
(1-t+t{{\rm e}^{-\frac{\alpha}{\beta}}})^{\beta}+ \theta^{-1} - {{\rm e}^{\alpha}}}\right)^{\frac{1}{\beta}}}
{1-{\rm e}^{-\frac{\alpha}{\beta}}}, ~~ \theta \in (0, \theta_{\mathcal{N}_{\alpha,\beta}}=e^{-\alpha}] \right\},  $$
is a connected stopping model closed under pgf composition that excludes $N_I$. In this case:
$$ K_{\mathcal{N}_{\alpha,\beta}}(t) = \frac{1}{p_1^{ZTNB}} \frac{h_{N_{ZTNB}}(t)}{1-h_{N_{ZTNB}}(t)},    $$
where
$$ p_1^{ZTNB}= \frac{\beta}{e^{\alpha}-1} \left(1-e^{-\alpha/\beta}\right), $$
and
$$ h_{N_{ZTNB}}(t)= \frac{\left(1-(1-e^{-\frac{\alpha}{\beta}})t\right)^{-\beta}-1}{e^{\alpha}-1}, $$
denote the probability of one and the pgf of the zero truncated negative binomial distribution.
\end{example}

\begin{example}
For $\alpha > 0$ and $n \in \mathbb{N}^{+}$, the family $\mathcal{N}_{\alpha,n}$ given by:
$$  \mathcal{N}_{\alpha,n} = \left\{ N_{\theta}^{\alpha,n}  \mid h_{N_{\theta}^{\alpha,n}}(t) =
\frac{\left(
\frac{(\theta^{-1}-1)(-t+1+t{{\rm e}^{{\frac{\alpha}{n}}}})^{n} - \theta^{-1} {{\rm e}^{\alpha}} + 1}
{(\theta^{-1}-{{\rm e}^{\alpha}})(-t+1+t{{\rm e}^{{\frac{\alpha}{n}}}})^{n} + {{\rm e}^{\alpha}}(1-\theta^{-1})}
\right)^{-\frac{1}{n}}-1}{{{\rm e}^{{\frac{\alpha}{n}}}}-1},
~~ \theta \in \left(0,\theta_{\mathcal{N}_{\alpha,n}}=e^{-\alpha}\right] \right\},  $$
is a connected stopping model closed under pgf composition that excludes $N_I$. In this case
$$ K_{\mathcal{N}_{\alpha,n}}(t) = \frac{1}{p_1^{ZTB}} \frac{h_{N_{ZTB}}(t)}{1-h_{N_{ZTB}}(t)},    $$
where
$$ p_1^{ZTB}= \frac{n}{e^{\alpha}-1}\left(e^{\frac{\alpha}{n}}-1\right), $$
and
$$ h_{N_{ZTB}}(t)=\frac{\left(1+\left(e^{\frac{\alpha}{n}}-1\right)t\right)^n-1}{e^{\alpha}-1},  $$
denote the probability of one and the pgf of the zero truncated binomial distribution.
\end{example}

\section{Discussion}

This paper establishes that the statistical stability of randomly stopped sum and extreme transformations requires the underlying
stopping model to be commutative. Specifically, statistical stability necessitates that the stopping model be closed under
pgf composition. Imposing the additional conditions that the model be connected and have a non-empty interior forces it
to be a family of random variables that commute whose support includes $1$,
which is uniquely characterized by its Koenigs function $K_{\mathcal{N}}$.

By providing concrete examples of stopping models satisfying these conditions, we show that the space of statistically stable
model extensions is significantly richer than Marshall and Olkin (1997) conjectured.

Each function that is analytic on $(0,1)$ with non-negative Taylor coefficients, a zero at the origin, and
a vertical asymptote at one, constitutes a normalized Koenigs function, $K_{\mathcal{N}}$,
associated to a distinctive class of functions that commute, $\mathcal{G}_{\mathcal{N}}$. Depending
on the properties of $K_{\mathcal{N}}$, the maximal parameter space $\mathcal{D}_{\mathcal{N}}$ under $\theta$
of the associated stopping model $\mathcal{N}$, falls into one of four categories:
\begin{description}
\item (i) $\mathcal{D}_{\mathcal{N}}=(0,1]$, admitting the identity (Section 6.1);
\item (ii) $\mathcal{D}_{\mathcal{N}}=(0,\theta_{K_{\mathcal{N}}}]$ for some $\theta_{K_{\mathcal{N}}} < 1$ (Section 6.2);
\item (iii) $\mathcal{D}_{\mathcal{N}} \subset \mathbb{N}^{+}$, resulting in a discrete parameter space; or
\item (iv) $\mathcal{D}_{\mathcal{N}}=\{1\}$, the degenerate case, where $\mathcal{N}$ consists solely of $N_I$.
\end{description}
Theorem \ref{thm:nnn} identifies the Koenigs functions associated with category (i),
ensuring that randomly stopped sum and extreme transformations that use the corresponding stopping models function as statistically stable extensions.
Future work remains to fully characterize the Koenigs function for category (ii) which, together with category (i), ensure that
the generalized transformations in Valero and Ginebra (2025) that use the corresponding stopping models are statistically stable.
A general criterion would allow for the determination of the maximal admissible parameter space for the stopping model directly
from $K_\mathcal{N}(t)$, distinguishing between families of models that admit a connected parameter space
and those confined to discrete or degenerate spaces.

The statistical models obtained through a statistically stable randomly stopped sum or extreme transformation are
themselves stable models under that transformation, remaining invariant under repeated applications of that transformation.
That is the case for example, for the models obtained through the classic Marshall-Olkin extension, which employs geometric stopping
in randomly stopped extreme transformations. Replacing geometric stopping with any of the models characterized in Section 5,
like the ones in Section 6.1,
or substituting stopped extremes with stopped sums, opens the door to rich new families of statistically stable models.

Finally, we note that our analysis has been restricted to stopping models supported on the
positive integers. When zero is included in the support, the resulting transformations
act as model contractions rather than extensions. Consequently,
they cannot produce statistically stable extensions, which is the reason they have been excluded from the present study.

\section*{Appendix 1: Proof of propositions in Section 3}

\begin{proof}[Proof of Proposition \ref{prop:zn1}]
Closure of $\mathcal{N}_{N_0}$ under composition follows directly from the associativity of
functional composition and the fact that if $h_{N_1}$ and $h_{N_2}$ commute with $h_{N_0}$,
then their composition $h_{N_1} \circ h_{N_2}$ does as well.
The inclusion of the iterates $\{N_0^{\circ m}\}$ is trivial as every function commutes with its own iterates.
\end{proof}

\begin{proof}[Proof of Proposition \ref{prop:znz}]
Let $h_{N_0}(t) = \sum_{j=m_0}^{\infty} p_j t^j$ and $h_{N}(t) = \sum_{j=m}^{\infty} a_j t^j$
be the pgfs of $N_0$ and $N$, with $m_0, m \ge 1$ and $p_{m_0}, a_m > 0$.
We focus on the case where one variable has no support at 1; without loss of generality, assume $p_1(N_0) = 0$, so that $m_0 > 1$.

By comparing the coefficients of the lowest-order terms in the identity $h_N \circ h_{N_0} = h_{N_0} \circ h_N$, we obtain $a_1 p_{m_0} = p_{m_0} a_1^{m_0}$. For $m_0 > 1$, this implies $a_1 \in \{0, 1\}$. Excluding the identity case $a_1=1$, yields $a_1= p_1(N) = 0$.
The converse follows by symmetry.
\end{proof}

\begin{proof}[Proof of Proposition \ref{prop:zn2}]
Let $N_0$ be a r.v. with pgf $h_{N_0} = \sum_{i=m_0}^{\infty} p_i t^i$, where $m_0 > 1$ and $p_{m_0} > 0$.
Consider two r.v.'s, $N_1, N_2 \in \mathcal{N}_{N_0}$,
with pgfs $h_{N_1}=\sum_{i=m}^{\infty}a_i t^i$ and $h_{N_2}=\sum_{i=m}^{\infty}\hat{a}_i t^i$, both
sharing the same leading order $m$, with
$a_m, \hat{a}_m \ne 0$. We show by induction that $a_k=\hat{a}_k$ for all $k \ge m$.

Since $N_1, N_2 \in \mathcal{N}_{N_0}$, the commutation relation $h_{N_0}\circ h_{N_j}(t) = h_{N_j}\circ h_{N_0}(t)$ implies:
$$  \sum_{i=m_0}^{\infty}p_i\left(\sum_{k=m}^{\infty}a_k t^k\right)^i = \sum_{k=m}^{\infty}a_{k}\left(\sum_{i=m_0}^{\infty}p_{i}t^{i}\right)^{k}. $$
In the base case $k=m$, equating the coefficients of the lowest-order, $t^{m_0 m}$, on both sides,
\begin{enumerate}
\item For $h_{N_0}\circ h_{N_1}$, the leading term arises from $i=m_0$, yielding $p_{m_0} a_m^{m_0}$.
\item For $h_{N_1}\circ h_{N_0}$, the leading term arises from $k=m$, yielding $a_m p_{m_0}^m$.
\end{enumerate}
Equating these gives $p_{m_0} a_m (a_m^{m_0-1} - p_{m_0}^{m-1}) = 0$.
Since $p_{m_0}, a_m \neq 0$ and $m, m_0 > 1$, the leading coefficient is
$a_m = p_{m_0}^{\frac{m-1}{m_0-1}}$. As this value is uniquely determined by $N_0$ and the index $m$,
it also holds for $N_2$ and $\hat{a}_m = a_m$.

In the inductive step, assume $\hat{a}_k = a_k$ for all $m \le k < m+n$, for some $n \ge 1$.
Let $[f(t)]_k$ denote the coefficient of $t^k$ in the expansion of $f(t)$.
We examine the coefficient of order $t^{m_0 m + n}$ in the commutation equation.

In $h_{N_0}\circ h_{N_1}$, the term $a_{m+n}$ first appears in $p_{m_0}(h_{N_1}(t))^{m_0}$.
Specifically, its contribution to the coefficient of $t^{m_0 m +n}$
arises from the multinomial expansion where
$a_m$ is selected $m_0-1$ times and $a_{m+n}$ is selected once, resulting in $p_{m_0} m_0 a_m^{m_0-1} a_{m+n}$. All other terms of
this order depend only on the coefficients $\{a_k\}_{k < m+n}$. Hence,
$$ [h_{N_0} \circ h_{N_1}]_{m_0 m + n} = p_{m_0} m_0 a_m^{m_0-1} a_{m+n} + Q_{m,n}(p, a_m, \dots, a_{m+n-1}), $$
where $Q_{m,n}$ is a polynomial involving only lower-order previously determined coefficients.

In $h_{N_1}\circ h_{N_0}$, the coefficient $[h_{N_1} \circ h_{N_0}]_{m_0 m + n}$ depends only on
coefficients up to $a_{m+n-1}$. To see this, note that the term involving $a_{m+n}$
is $a_{m+n}(h_{N_0}(t))^{m+n}$, and its lowest power is $t^{m_0(m+n)}$.
Since $m_0>1$ and $n>1$, it follows that $m_0 m + m_0 n > m_0 m + n$. Thus,
$a_{m+n}$ does not contribute to the coefficient of order $m_0 m + n$ on the right-hand side.

Equating the coefficients of order $m_0 m + n$ from both sides yields:
$$ p_{m_0} m_0 a_m^{m_0-1} a_{m+n} = R_{m,n}(p, a_m, \dots, a_{m+n-1}), $$
where $R_{m,n}$ is a combination of polynomials from both sides of the expansion.
Solving for $a_{m+n}$,
$$ a_{m+n} = \frac{R_{m,n}(p,a_m,\ldots,a_{m+n-1})}{p_{m_0}m_0 a_m^{m_0-1}}. $$
Since $p_{m_0}, m_0, a_m \neq 0$, the coefficient $a_{m+n}$ is uniquely determined by $N_0$ and the set $\{a_k\}_{k < m+n}$.
By the inductive hypothesis, $\hat{a}_{m+n}=a_{m+n}$.
By induction, $h_{N_1} = h_{N_2}$, and
the model $\mathcal{N}_{N_0}$ is uniquely parametrized by the integer $m \in \mathbb{N}^+$.
\end{proof}

\begin{proof}[Proof of Proposition \ref{prop:gincrb}]
Constant functions $g(t) \equiv 0$ and $g(t) \equiv 1$ trivially
commute with $h_{N_0}$. For non-constant analytic $g$, we show $g'(t) > 0$ on $(0,1)$.

Suppose $g'(t_0) = 0$ for some $t_0 \in (0,1)$.
Differentiating $g \circ h_{N_0} = h_{N_0} \circ g$ yields:
$$ g'(h_{N_0}(t)) h'_{N_0}(t) = h'_{N_0}(g(t)) g'(t). $$
Since $h'_{N_0} > 0$ on $(0,1)$, $g'(t_0) = 0$ implies $g'(h_{N_0}(t_0)) = 0$. Let $t_n = h_{N_0}^{\circ n}(t_0)$.
By induction, $g'(t_n) = 0$ for all $n \in \mathbb{N}$. Because $h_{N_0}$ is the pgf of a positive integer-valued variable,
$t_n \to 0$ as $n \to \infty$. The analyticity of $g$ implies its zeros cannot accumulate at $0$ unless $g'$ is identically zero,
which contradicts our non-constant assumption. Thus, $g'(t) \neq 0$ on $(0,1)$.
Since $g$ must map $[0,1]$ to its domain and $h_{N_0}$ is increasing, $g$ must be strictly increasing.

For the boundaries, commutativity at the fixed points of $h_{N_0}$ requires $h_{N_0}(g(1)) = g(h_{N_0}(1)) = g(1)$
and $h_{N_0}(g(0)) = g(h_{N_0}(0)) = g(0)$. Since $0$ and $1$ are the unique fixed points of $h_{N_0}$ on $[0,1]$,
and $g$ is strictly increasing, it follows that $g(0)=0$ and $g(1)=1$. Thus, $g$ is a strictly increasing bijection of $[0,1]$.
\end{proof}

\begin{proof}[Proof of Proposition \ref{prop:com}]
By Theorem \ref{thm:par}, every $g \in \mathcal{G}_{N_0}$ is uniquely determined by $\theta = g'(0)$.
Since $\mathcal{G}_{N_0}$ is closed under composition (Proposition \ref{prop:gcfc}),
$g_{\theta_1} \circ g_{\theta_2} \in \mathcal{G}_{N_0}$.
By the chain rule, $(g_{\theta_1} \circ g_{\theta_2})'(0) = g_{\theta_1}'(0)g_{\theta_2}'(0) = \theta_1 \theta_2$.
The unique parametrization implies $g_{\theta_1} \circ g_{\theta_2} = g_{\theta_1 \theta_2}$.

For $0 < p_1(N_0) < 1$, $g_{\theta_1}$ is a strictly increasing bijection (Proposition \ref{prop:gincrb}).
Its inverse $g_{\theta_1}^{-1}$ satisfies
$g_{\theta_1}^{-1} \circ h_{N_0} = g_{\theta_1}^{-1} \circ (h_{N_0} \circ g_{\theta_1}) \circ g_{\theta_1}^{-1} = g_{\theta_1}^{-1}
\circ (g_{\theta_1} \circ h_{N_0}) \circ g_{\theta_1}^{-1} = h_{N_0} \circ g_{\theta_1}^{-1}$,
thus $g_{\theta_1}^{-1} \in \mathcal{G}_{N_0}$.
Since $(g_{\theta_1}^{-1})'(0) = 1/g_{\theta_1}'(0) = 1/\theta_1$,
we have $g_{\theta_1}^{-1} = g_{1/\theta_1}$.
The third statement follows immediately from the first two.
\end{proof}

\begin{proof}[Proof of Proposition \ref{prop:finit}]
We proceed by contradiction.  Assume $\mathcal{N}_{N_0}$ has a non-empty interior.
By Theorem \ref{thm:intp}, there exists $\theta_{N_0}>0$ such that $(0,\theta_{N_0}]\subset\mathcal{D}_{N_0}$.
Let $\theta_0 = h_{N_0}^{\prime}(0)$ and choose $n \ge 1$ such that $\theta_1 = \theta_0^n \in (0,\theta_{N_0}]$.
Define $N_1 = N_0^{\circ n}$, so that $h_{N_1}^{\prime}(0) = \theta_1$.

Since $N_0$ has finite support, $N_1$ also has finite support. Its pgf is a polynomial of degree $r$,
$$ h_{N_1}=g_{\theta_1}=\sum_{n=1}^r a_n(\theta_1) t^n, ~~~ a_r(\theta_1)\neq0.  $$
Because $N_1$ is an interior point, any infinitesimal variation:
$$  \frac{\partial}{\partial\theta}g_{\theta}(t)_{|\theta=\theta_1} = \sum_{n=1}^{r_2} d_n(\theta_1)t^n,  $$
must be a polynomial of degree $r_2 \le r$; If there existed a non-zero coefficient $d_j(\theta_1)$ for
$j>r$, the coefficients of $g_{\theta_1 \pm \epsilon}(t)$ would eventually become negative for a sufficiently small $\epsilon$,
contradicting that $h_{N_1}$ is in the interior of $\mathcal{H}_{N_0}$.

Differentiating $g_{\theta} \circ h_{N_1} =h_{N_1}\circ g_{\theta}$ w.r.t. $\theta$ and evaluating at $\theta=\theta_1$, we have:
$$ \frac{\partial}{\partial\theta}\left(g_{\theta}\circ h_{N_1}\right)_{|\theta=\theta_1} = \frac{\partial}{\partial\theta}\left(h_{N_1}\circ g_{\theta}\right)_{|\theta=\theta_1}.$$
The left hand side is:
$$ \frac{\partial}{\partial\theta}\left(g_{\theta}\circ h_{N_1}\right)_{|\theta=\theta_1} = \left(\sum_{n=1}^{r_2} d_n\left(\theta_1\right) t^n\right) \circ \left(\sum_{n_2=1}^r a_{n_2}\left(\theta_1\right) t^{n_2}\right), $$
where the leading term is $d_{r_2}(\theta_1) a_r^{r_2}(\theta_1) t^{r_2 r}$.
By the chain rule, the right hand side is:
$$\frac{\partial}{\partial\theta}\left(h_{N_1} \circ g_{\theta}\right)_{|\theta=\theta_1} = \left(\left(\sum_{n=1}^r n a_n\left(\theta_1\right) t^{n-1}\right) \circ \left(\sum_{n_2=1}^r a_{n_2}\left(\theta_1\right) t^{n_2}\right)\right) \left(\sum_{n_3=1}^{r_2} d_{n_3}\left(\theta_1\right) t^{n_3}\right), $$
where the leading term is $r a_r^r(\theta_1) d_{r_2}(\theta_1) t^{r(r-1) + r_2}$.
Equating degrees, $r_2 r = (r-1) r + r_2$, we find $r_2=r$ because $r>1$.
Equating coefficients, $d_r(\theta_1) a_r^r(\theta_1) = r a_r^r(\theta_1) d_{r_2}(\theta_1)$, we find
$(r-1) a_r(\theta_1)^r d_r(\theta_1)=0$, but that is impossible, because $r>1$, $a_r(\theta_1) \neq 0$ and $d_r(\theta_1) \neq 0$.
Hence we reach a contradiction, which implies that the interior of $\mathcal{N}_{N_0}$ must be empty.
\end{proof}

\section*{Appendix 2: Proofs of Propositions in Section 4}

\begin{proof}[Proof of Proposition \ref{prop:tpr1}]
We aim to show that if $\mathcal{N} \neq \mathcal{N}_1$, then the model is disconnected.
First, observe that $\mathcal{N} \neq \mathcal{N}_1$ is equivalent to the complement
$\mathcal{N}_o^c$ (relative to $\mathcal{N}$) being non-empty, and that $\mathcal{N}_o$ is non-empty by definition.

Recall the partition
$\mathcal{N}_o^c = \{ N\in \mathcal{N} \mid p_o(N)=0 \} = \overline{\mathcal{N}}_o^c \cup (\overline{\mathcal{N}_o}\cap\mathcal{N}_o^c)$.
We partition the model as
$\mathcal{N}=\mathcal{N}_o \cup \overline{\mathcal{N}}_o^c \cup (\overline{\mathcal{N}_o}\cap\mathcal{N}_o^c)$.
If we establish that $\overline{\mathcal{N}_o}\cap\mathcal{N}_o^c = \emptyset$ whenever
$\mathcal{N} \neq \mathcal{N}_1$, then $\mathcal{N}$ simplifies to the union of two disjoint open sets,
$\mathcal{N}=\mathcal{N}_o \cup \overline{\mathcal{N}}_o^c$, proving that $\mathcal{N}$ is disconnected.

We argue by contradiction.
Suppose that there exists an element $N_{0} \in \overline{\mathcal{N}}_o \cap \mathcal{N}_o^c$.
Since $N_{0} \in \mathcal{N}_o^c$, it holds that $N_0 \in \mathcal{N}_s \subset \mathcal{N}_o^C \subset \mathcal{N}$
for some $s > o \ge 1$, and its pgf is $h_{N_{0}}(t)=\sum_{j=s}^\infty p_j(N_0) t^j$ with $p_s(N_0) >0$.
Since $N_{0} \in \overline{\mathcal{N}_o}$,
there exists a sequence $\{N_i \}_{i \in \mathbb{N}} \subset \mathcal{N}_o$ such that
$N_{i} \rightarrow N_0$, which implies $p_o(N_i) \rightarrow p_o(N_0) = 0$.

Since $\mathcal{N}_o$ is an open subset of $\mathcal{N}$, it has a non-empty interior and there exists
an open ball $\mathcal{B}_c \subset \mathcal{N}_o$ centered at some $N_c$.
Moreover, the set of compositions $\mathcal{B}_c \circ N_0$ is contained within $\mathcal{N}_{s o}$,
since $N_c \in \mathcal{N}_o$ and $N_0 \in \mathcal{N}_s$.
Because composition with $N_0$ acts as a homeomorphism on the parameter space, $\mathcal{B}_c \circ N_0$ is an open subset
of $\mathcal{N}$, consisting of interior points of $\mathcal{N}_{so}$.

Now, consider the sequence of compositions $\{N_c \circ N_i\}_{i \in \mathbb{N}}$.
By the continuity of pgf composition, since $N_c \in \mathcal{N}_o$ and $N_i \in \mathcal{N}_o$,
the entire sequence $\{N_c \circ N_i\}$ is contained in $\mathcal{N}_{o^2}$.
Furthermore, as $i \to \infty$, $N_c \circ N_i$ converges to $N_c \circ N_0$ which is an interior point of $\mathcal{N}_{so}$.

By the definition of an interior point, there must exist an index $i_0$ such that for all $i > i_0$, the element $N_c \circ N_i$
lies within an open neighborhood of $N_c \circ N_0$ contained in $\mathcal{N}_{so}$. However, this requires
$N_c \circ N_i \in \mathcal{N}_{o^2} \cap \mathcal{N}_{so}$. Since $s > o \ge 1$, it follows that $so > o^2$,
and thus $\mathcal{N}_{o^2} \cap \mathcal{N}_{so} = \emptyset$. This contradiction shows that no such $N_0$ can exist.

Therefore, if $\mathcal{N} \neq \mathcal{N}_1$ then
$\mathcal{N}=\mathcal{N}_o \cup \overline{\mathcal{N}}_o^c$, which
is the union of two disjoint open sets, which means that $\mathcal{N}$ is not connected.
We conclude that $\mathcal{N}$ can be connected only if $\mathcal{N} = \mathcal{N}_1$
\end{proof}

\begin{proof}[Proof of Proposition \ref{prop:ff1}]
%
Since $p_1(N)\in (0,1)$ is not constant, because $p_1(N^{\circ n}) = p_1(N)^n \ne p_1(N)$, there exists
$N_0 \in \mathcal{N}_1$ such that $\nabla p_1(\delta) \ne 0$ at the local parameter value $\delta_0=\delta(N_0)$.
Choosing an index $j$ such that $\frac{\partial p_1}{\partial\delta_j} \ne 0$,
we define the sub-vector $\delta^*=(\delta_1, \ldots, \delta_{j-1},\delta_{j+1}, \ldots,\delta_m)$.
The Jacobian of the mapping $\delta \rightarrow \left(p_1(\delta),\delta^*(\delta)\right)$
is non-singular, satisfying
$\det\left(J\right) = \frac{\partial p_1}{\partial\delta_j} \ne 0$.

By the implicit function theorem,
there exists an open neighborhood $\mathcal{U}$ of $N_0$ where $(\theta, \delta^*)$, with $\theta=p_1(N)$, provides a valid local
parametrization. Within this neighborhood we can construct the set of random variables:
$$ \mathcal{R}_0 = \{N_{(r \theta_0, \delta_0^*)}  \mid r \in (1-\epsilon, 1+\epsilon) \} \subset
\mathcal{N},  $$
for a given $\epsilon > 0$, which is parametrized by $r$ with $\delta_0^*$ held fixed.
Similarly, we can also define:
$$ \mathcal{\tilde{R}}_0 = \{N_{(r_1 \theta_0, \delta_0^*)} \circ N_{(r_2 \theta_0, \delta_0^*)}
\mid r_1, r_2 \in (1-\epsilon, 1+\epsilon) \} \subset \mathcal{N},  $$
which can be parametrized by $r$ with $\delta_0^*$ fixed, or by $(\tilde{\theta}=\theta_1 \theta_2, \delta_0^*)$.
Given that the composition between any two $N_{(r_1 \theta_0, \delta_0^*)}, N_{(r_2 \theta_0, \delta_0^*)} \in \mathcal{R}_0$,
is $N_{(r_1 \theta_0, \delta_0^*)} \circ N_{(r_2 \theta_0, \delta_0^*)} = N_{(r_1 r_2 \theta_0^2, \delta_0^*)} \in \mathcal{\tilde{R}}_0$,
the elements in $\mathcal{R}_0$ commute between them and hence with $N_{\delta_0}$.
Therefore,
$$ \left\{ h_{N_{(\theta,\delta^*_0)}} \mid \theta \in (\theta_0 - \epsilon, \theta_0 + \epsilon)\right\}
\subset \mathcal{H_{N}} \cap \mathcal{H}_{N_0}.  $$
\end{proof}

The proof of Proposition $\ref{prop:ff2}$ relies on the following lemma.
\begin{lemma}
Let $f(t) = \sum_{i=1}^{\infty} a_i t^i$ be a power series such that $a_1 \neq 0$ and
$a_r \neq 0$ for some $r > 1$, and consider the family of functions
$$ \mathcal{G}_{\theta,f}^{*} =
\{ g_{\theta_1,\ldots,\theta_n}^{*} = (\theta_1 f) \circ (\theta_2 f) \circ \dots \circ (\theta_{n-1} f) \circ (\theta_n t) \mid
(\theta_1, \ldots, \theta_n) \in (\mathbb{R}^{+})^n \}. $$
Then, $\mathcal{G}_{\theta,f}^{*}$ is a set of dimension $n$.
\label{lem:dmn}
\end{lemma}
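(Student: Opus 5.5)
The plan is to show that the map $\Phi:(\theta_1,\ldots,\theta_n)\mapsto g^{*}_{\theta_1,\ldots,\theta_n}$, from $(\mathbb{R}^+)^n$ into power series without constant term, has an injective differential at every point. Then its image is locally an $n$-dimensional immersed submanifold, and this is what "dimension $n$" means here, in the same sense the paper uses for $\mathcal{G}_{N_0,N,n}$ in Proposition \ref{prop:ff2}.

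First I fix notation for the tangent vectors. Write $L_k=\theta_k f$ for $k<n$ and $L_n=\theta_n t$. Set $P_k=L_1\circ\cdots\circ L_{k-1}$ and $S_k=L_{k+1}\circ\cdots\circ L_n$, with $P_1=S_n=t$. The chain rule gives
$$ D_k:=\theta_k\frac{\partial g^{*}}{\partial\theta_k}=P_k'\bigl(L_k(S_k)\bigr)\,L_k(S_k). $$
So it suffices to show that $D_1,\ldots,D_n$ are linearly independent as formal power series. Equivalently, I must show that $\sum_k c_kD_k=0$ forces all $c_k=0$.

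The independence will be proved by induction on $n$. For $n=1$ we have $D_1=\theta_1 t\neq0$, so there is nothing to do. For the inductive step, write $g^{*}=L_1\circ G$ with $G=L_2\circ\cdots\circ L_n$, which is an element of the same type of family with $n-1$ parameters. Then
$$ D_1=\theta_1 f(G), \qquad D_k=\theta_1 f'(G)\,\tilde D_k \quad (k\ge2), $$
where $\tilde D_k$ are the corresponding tangent vectors of the $(n-1)$-parameter family. The factor $f'(G)$ is invertible as a power series because $f'(0)=a_1\neq0$. Dividing a relation $\sum_k c_kD_k=0$ by $\theta_1 f'(G)$ gives
$$ c_1\,\frac{f}{f'}(G)+\sum_{k\ge2}c_k\tilde D_k=0 .$$
Now compare this with the relation that really holds in the smaller family, namely the scaling identity for the innermost parameter. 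The $\tilde D_k$ are all of the form (derivative of a prefix)$\,\times\,$(a suffix). The term $\frac{f}{f'}(G)=\frac{1}{a_1}G-\frac{(r-1)a_r}{a_1^2}G^{r}+\cdots$ contains a genuinely nonlinear part, because $f$ is not linear: $a_r\neq0$ with $r>1$, where $r$ is taken to be the smallest such index.

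The key step, and the one I expect to be the main obstacle, is to prove that $\frac{f}{f'}(G)$ is not in the span of $\tilde D_2,\ldots,\tilde D_n$. Once this is done, $c_1=0$ and the induction hypothesis gives $c_2=\cdots=c_n=0$. I would first try to track the lowest-order coefficients. The coefficient of $t$ in $\tilde D_k$ is the same nonzero constant $\lambda=a_1^{n-2}\prod_{j\ge2}\theta_j$ for every $k$, and the coefficient of $t$ in $\frac{f}{f'}(G)$ is $\lambda/a_1$. Hence the first-order coefficients impose only one linear condition. I would then use the coefficients of $t^{r}$, $t^{r^2}$, and so on. In $\tilde D_k$, the first nonlinear correction of order $t^{r^{j}}$ arises from nonlinearities sitting at depths that depend on $k$, in a triangular fashion: the map $L_n$ is linear, so $\tilde D_n$ is "more linear" than $\tilde D_{n-1}$, and so on. The hope is that the resulting coefficient matrix is triangular with nonzero diagonal entries, each a power of $a_r$ times a product of the $\theta_j$.

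If the direct coefficient bookkeeping becomes unmanageable, my fallback is a cleaner invariant. Conjugating by a dilation $t\mapsto\lambda t$ commutes with linear maps but not with $f$. So a vanishing combination would force $f$ to satisfy a Schröder/Euler-type relation $f(\lambda x)=\lambda f(x)$ along the relevant orbit, that is, $x f'(x)=f(x)$. This would imply $f=a_1t$, contradicting $a_r\neq0$.
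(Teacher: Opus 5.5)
Your reduction is correct: with $D_k=\theta_k\,\partial g^*/\partial\theta_k$ and $g^*=\theta_1 f(G)$, a relation $\sum_k c_kD_k=0$ is equivalent to $c_1\frac{f}{f'}(G)+\sum_{k\ge2}c_k\tilde D_k=0$, so everything rests on your key step. That step is false under the stated hypotheses, so neither the triangular coefficient bookkeeping nor the dilation fallback can be completed.

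Take $f(t)=t/(1-t)$, so $a_1=a_2=1$, and $n=3$. With $s=\theta_3t$ you get $G=\theta_2s/(1-s)$, $\tilde D_2=G$, $\tilde D_3=\theta_2s/(1-s)^2$, and
\[
\frac{f}{f'}(G)=G(1-G)=\frac{\theta_2 s\,(1-s-\theta_2 s)}{(1-s)^2}=(1+\theta_2)\tilde D_2-\theta_2\tilde D_3 .
\]
Hence $D_1-(1+\theta_2)D_2+\theta_2D_3\equiv0$ at every point of $(\mathbb{R}^+)^3$. Directly,
\[
g^*_{\theta_1,\theta_2,\theta_3}(t)=\frac{\theta_1\theta_2\theta_3\,t}{1-(1+\theta_2)\theta_3\,t},
\]
which depends on $\theta$ only through $\theta_1\theta_2\theta_3$ and $(1+\theta_2)\theta_3$. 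For any $n$, $g^*$ is a composition of M\"obius maps fixing $0$, hence of the form $\alpha t/(1-\gamma t)$. So the family has dimension $2$ for every $n\ge3$. The same happens for $f(t)=t/(1-t^k)^{1/k}$, the conjugate of this example by $t\mapsto t^k$.

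In particular, your fallback claim that a vanishing combination forces $xf'(x)=f(x)$ is wrong: here a vanishing combination exists and $f$ is not linear. As a statement about all $f$ with $a_1\neq0\neq a_r$, the lemma is true only for $n\le2$. In that case the coefficients $\theta_1\theta_2^ia_i$ of $\theta_1f(\theta_2t)$ do determine $\theta_1,\theta_2$.

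Switching to the paper's argument would not help. It isolates $\beta_1,\dots,\beta_n$ from the contributions of $a_1t$ and $a_rt^r$ and shows that $\theta\mapsto\beta$ is a diffeomorphism. But $B_2t^r,\dots,B_nt^r$ all fall on the same monomial $t^r$ of $g^*$, together with any other contributions to that coefficient. So $g^*$ only reveals $\beta_1$ and one linear combination of $\beta_2,\dots,\beta_n$; in the example above the $t^2$-coefficient is exactly $\beta_2+\beta_3$.

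What is missing in both arguments is a hypothesis preventing $f$ and the dilations $t\mapsto\theta t$ from lying in a common finite-dimensional group of germs. The local data $a_1\neq0$, $a_r\neq0$ cannot provide this. In the intended application, $f=K_{N_0}\circ h_N\circ K_{N_0}^{-1}$ is an increasing bijection of $[0,\infty)$, which rules out the M\"obius example above. A correct version of the lemma would need such an extra hypothesis, and a proof that genuinely uses it.
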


\begin{proof}[Proof of Lemma \ref{lem:dmn}]
All non-zero terms in the series expansion of $g_{\theta_1, \ldots, \theta_n}^{*}$
are of the form $b \theta_1^{r_1} \cdots \theta_n^{r_n} t^{r_n}$, where
$r_1 = 1$, $r_{j+1} \ge r_j$, and where $b$ is a coefficient depending on the non-zero coefficients of $f$.

We focus on $n$ specific terms in the series expansion of
$g_{\theta_1, \ldots, \theta_n}^{*} \in \mathcal{G}_{\theta,f}^{*}$,
which arise from the sole contribution of the terms  $a_1 t$ and $a_r t^r$ of $f(t)$.
The first term is given by:
$$ B_1 t = (\theta_1 a_1 t) \circ (\theta_2 a_1 t) \circ \dots \circ (\theta_{n-1} a_1 t) \circ (\theta_n t) =
a_1^{n-1} \beta_1(\theta) t, $$
and for $j=2, \ldots, n$, the terms considered are:
$$ B_j t^r =
(\theta_1 a_1 t) \circ \dots \circ (\theta_{j-1} a_r t^r) \circ \dots \circ (\theta_{n-1} a_1 t) \circ (\theta_n t) =
a_r a_1^{j-2+r(n-j)} \beta_j(\theta) t^r,  $$
where $\beta_1(\theta) = \prod_{j=1}^n \theta_j$ and $\beta_j(\theta) = \theta_1 \dots \theta_{j-1} (\theta_j \dots \theta_n)^r$.
We observe that the map $\theta \mapsto \beta$ is a diffeomorphism from $(\mathbb{R}^+)^n$ to itself. In fact it allows
as an inverse mapping:
$$ \theta(\beta) = \beta(\theta)^{-1} = \left(\beta_1 \sqrt[r-1]{\frac{\beta_1}{\beta_2}}, \sqrt[r-1]{\frac{\beta_2}{\beta_3}}, \dots, \sqrt[r-1]{\frac{\beta_j}{\beta_{j+1}}}, \dots, \sqrt[r-1]{\frac{\beta_{n-1}}{\beta_n}}, \sqrt[r-1]{\frac{\beta_n}{\beta_1}}\right), $$
which is well defined everywhere since $\beta_j>0$ for all $j$. This explicit inverse mapping
indicates that the parameters $\theta_i$ are uniquely determined by the coefficients $\beta_j$. Since these $n$ coefficients are algebraically
independent in terms of the $\theta_i$, the set $\mathcal{G}_{\theta,f}^*$ spans an $n$-dimensional manifold.
\end{proof}

\begin{proof}[Proof of Proposition \ref{prop:ff2}]
%
Recall that for $g_{\theta} \in \mathcal{G}_{N_0}$, we have the representation:
$$ g_{\theta}(t) = K_{N_{0}}^{-1}(\theta  K_{N_{0}}(t)), $$
where $\theta=g_\theta^\prime(0) \in (0,\infty)$, and $K_{N_{0}}$ is the Koenigs function associated with $N_0$.

If $h_{N} \in \mathcal{H}_\mathcal{N}$ does not commute with $h_{N_0}$,
it cannot commute with the elements of $\mathcal{G}_{N_0}$. This implies that $K_{N_0}\left(h_{N}(K_{N_0}^{-1}(t))\right) \neq h_{N}^{\prime}(0) t,$
and hence the function $f = K_{N_0} \circ h_N \circ K_{N_0}^{-1}$ is not a simple linear map of the form $f(t) = ct$.
Consequently, its power series expansion $f(t) = \sum_{i=1}^{\infty} a_i t^i$ must contain at
least one coefficient $a_r \neq 0$ for some $r > 1$, while $a_1 = h_N'(0) \neq 0$.

With all this, we can express the $n$-fold composition as:
$$ g_{\theta_1, \ldots, \theta_n} = g_{\theta_1} \circ h_{N} \circ g_{\theta_2} \circ h_{N} \circ \dots \circ h_{N} \circ g_{\theta_n} =  $$
$$ K_{N_{0}}^{-1} \circ \left(\theta_1 f\right) \circ \left(\theta_2 f\right) \circ \dots \circ \left(\theta_{n-1} f\right) \circ \left(\theta_n\,t\right) \circ K_{N_{0}}.  $$
To analyze the dimensionality of the set of these functions, $\mathcal{G}_{N_0, N, n}$, we define the transformation
$$ g_{\theta_1, \ldots, \theta_n}^{*} = K_{N_0} \circ g_{\theta_1, \ldots, \theta_n} \circ K_{N_{0}}^{-1} = \left(\theta_1 f\right)\circ\left(\theta_2 f\right)\circ \dots \circ\left(\theta_{n-1} f\right)\circ\left(\theta_n t\right). $$
The set of these transformed functions, $ \mathcal{G}^{*}_{N_0,N,n} =
\{ g_{\theta_1,\ldots,\theta_n}^{*} \mid (\theta_1, \ldots, \theta_n) \in (\mathbb{R}^{+})^n \},$
is the image of $\mathcal{G}_{N_0, N, n}$ under a fixed pre- and post-composition with $K_{N_0}$ and $K_{N_0}^{-1}$,
which are bijections on the space of analytic functions. This implies that the dimension of $\mathcal{G}_{N_0, N, n}$
is identical to the dimension of $\mathcal{G}^*_{N_0, N, n}$.
By Lemma \ref{lem:dmn}, the set $\mathcal{G}^*_{N_0, N, n}$ is an $n$-dimensional manifold.
Thus, the original set $\mathcal{G}_{N_0, N, n}$ is of dimension $n$.
\end{proof}

\section*{Appendix 3: Proof of the Proposition in Section 5}

\begin{proof}[Proof of Proposition \ref{prop:vphi1}]
The function $K_{\mathcal{N}}(t)$ is analytic on $[0,1)$, and satisfies $K_{\mathcal{N}}(0)=0$, $K_{\mathcal{N}}^\prime(0)=1$,
and $K_{\mathcal{N}}^{(n)}(0) \ge 0$ for all $n$.
We may therefore represent $K_{\mathcal{N}}(t)$ by:
$$ K_{\mathcal{N}}(t) = t + \sum_{i=2}^{\infty}a_it^i.  $$
Substituting this expansion into the definition of $\varphi_{\mathcal{N}}(t)$,
$$ \varphi_{\mathcal{N}}(t) =
t\frac{\sum_{i=2}^{\infty}(i-1)\,a_it^{i-2}}{1+\sum_{i=2}^{\infty}i\,a_it^{i-1}}. $$
This expression confirms that $\varphi_{\mathcal{N}}(t)$ is analytic on $[0,1)$ and
$\varphi_{\mathcal{N}}(0)=0$.

Since $\lim_{t\rightarrow 1^{-}}K_{\mathcal{N}}(t)=\infty$, it follows that
$\lim_{t\rightarrow 1^{-}} \log K_{\mathcal{N}}(t)=\infty$, and hence that
$$ \lim_{t\rightarrow 1^{-}} (\log K_{\mathcal{N}}(t))^\prime =
\lim_{t\rightarrow 1^{-}} \frac{K_{\mathcal{N}}^\prime(t)}{K_{\mathcal{N}}(t)} = \infty,  $$
which implies
$$ \lim_{t\rightarrow 1^{-}} \frac{K_{\mathcal{N}}(t)}{K_{\mathcal{N}}^\prime(t)} = 0.  $$
Applying this to the definition of $\varphi_{\mathcal{N}}(t)$ yields,
$$  \lim_{t\rightarrow 1^{-}}  \varphi_{\mathcal{N}}(t) = \lim_{t\rightarrow 1^{-}} 1 - \frac{K_{\mathcal{N}}(t)}{t K_{\mathcal{N}}^{\prime}(t)} = 1.  $$
To determine the derivative $\varphi_{\mathcal{N}}^\prime(1)$, we differentiate $\varphi_{\mathcal{N}}(t)$,
$$ \varphi_{\mathcal{N}}^\prime(t) = -\frac{1}{t} + \frac{1}{t^2} \frac{K_{\mathcal{N}}(t)}{K_{\mathcal{N}}^\prime(t)}
+ \frac{1}{t} \frac{K_{\mathcal{N}}(t) K_{\mathcal{N}}^{\prime \prime}(t)}{K_{\mathcal{N}}^\prime(t)^2}.   $$
As $t \rightarrow 1$ the second term vanishes.
To evaluate that limit for the third term, we apply l'H\^opital's rule twice to the expression for $m$,
which yields
$$ m = \lim_{t\rightarrow 1^{-}} \frac{\log K_{\mathcal{N}}(t)}{\log (1-t)} =
\lim_{t\rightarrow 1^{-}} \left(1 - \frac{K_{\mathcal{N}}(t) K_{\mathcal{N}}^{\prime \prime}(t)}{K_{\mathcal{N}}^\prime(t)^2} \right)^{-1}, $$
and hence,
$$  \lim_{t\rightarrow 1^{-}} \frac{K_{\mathcal{N}}(t) K_{\mathcal{N}}^{\prime \prime}(t)}{K_{\mathcal{N}}^\prime(t)^2} = 1 - \frac{1}{m}.  $$
Combining these limits, we obtain
$$ \varphi_{\mathcal{N}}^\prime(1) =\lim_{t\rightarrow 1^{-}}\varphi_{\mathcal{N}}^\prime(t) = -\frac{1}{m}.    $$
Finally, let $y_{\mathcal{N}}(t) = K_{\mathcal{N}}(t)/t$, which allows us to re-write $\varphi_{\mathcal{N}}(t)$ as
$$ \varphi_{\mathcal{N}}(t) = 1 - \frac{y_{\mathcal{N}}(t)}{t\,y_{\mathcal{N}}^{\prime}(t)+y_{\mathcal{N}}(t)}. $$
Equivalently, this gives
$$\frac{y_{\mathcal{N}}^{\prime}(t)}{y_{\mathcal{N}}(t)} = \frac{1}{t} \frac{\varphi_{\mathcal{N}}(t)}{(1-\varphi_{\mathcal{N}}(t))}. $$
Integrating from $0$ to $t$ and noting that $y_{\mathcal{N}}(0)=\lim_{t \to 0} K_{\mathcal{N}}(t)/t = K_{\mathcal{N}}'(0) =1$,
the integration constant vanishes, yielding,
$$ \log y_{\mathcal{N}}(t)=\int_{0}^{t} \frac{1}{s} \frac{\varphi_{\mathcal{N}}(s)}{(1-\varphi_{\mathcal{N}}(s))}\textrm{d}s. $$
\end{proof}

\section{Bibliography}

\begin{description}

\item[]
Cowen, C.C. (1984). Commuting analytic functions. \emph{Transactions of the American Mathematical Society}, 283, 685-695.

\item[]
Feller, W. (1943). On a general class of ``contagious" distributions. \emph{Annals of Mathematical Statistics}, 14, 389-400.

\item[]
Gurland, J. (1957). Some interrelations among compound and generalized distributions. \emph{Biometrika}, 44, 265-268.

\item[]
Gurland, J. (1958). A general class of contagious distributions. \emph{Biometrics}, 14, 229-249.

\item[]
Johnson, N.L., Kemp, A.W., Kotz, S. (2005). \emph{Univariate Discrete Distributions, 3rd Ed}. Chapter 9. New York: Wiley.

\item[]
Karlin, S., McGregor, J. (1968a). Embeddability of discrete time simple branching processes into continuous time branching processes.
\emph{Transactions of the American Mathematical Society}, 132, 115-136.

\item[]
Karlin, S., McGregor, J. (1968b). Embedding iterates of analytic functions with two fixed points into continuous groups.
\emph{Transactions of the American Mathematical Society}, 132, 137-145.


\item[]
Kuczma, M. (1968). \emph{Functional equations in a single variable}. Monografie Matematyczne. Warszawa: PWN-Polish Scientific Publishers.

\item[]
Marshall, A.W., Olkin, I. (1997). A new method for adding a parameter to a family of distributions with application to the exponential
and Weibull families. \emph{Biometrika}, 84, 641-652.

\item[]
Neyman, J. (1939). On a new class of ``contagious" distributions applicable in entomology and bacteriology.
\emph{Annals of Mathematical Statistics}, 10, 35-57.

\item[]
Pranger, W. (1970). Iterations of functions analytic on a disk. \emph{Aequationes Math.}, 4, 203-204.

\item[]
Shaked, M. (1975). On the distribution of the minimum and of the maximum of a random number of i.i.d. random variables.
In \emph{Statistical Distributions in Scientific Work, Vol I}. ed. G.P. Patil, S. Kotz and J.K. Ord. Reidel, Dordrecht. pp. 363-380.

\item[]
Shaked, M., Wong, T. (1997). Stochastic comparisons of random minima and maxima. \emph{Journal of Applied Probability}, 34, 420-425

\item[]
Valero, J., Ginebra, J. (2025). On statistical model extensions based on randomly stopped extremes.
\emph{SORT, Statistics and Operation Research Transactions,} 49, 43-72. \\
https://doi.org/10.57645/20.8080.02.22

\end{description}

\end{document}